\documentclass{amsart}

\usepackage{amssymb}
\usepackage{geometry}
\usepackage{color}
\usepackage{graphicx}
\usepackage{bm}
\usepackage{enumitem}
\usepackage{comment}
\usepackage{dsfont}

\newtheorem{thm}{Theorem}[section]
\newtheorem*{thm*}{Theorem}
\newtheorem{cor}[thm]{Corollary}
\newtheorem{lem}[thm]{Lemma}
\newtheorem{prop}[thm]{Proposition}
\theoremstyle{definition}
\newtheorem{defn}[thm]{Definition}
\newtheorem{rem}[thm]{Remark}

\usepackage[colorlinks=true,urlcolor=blue, citecolor=red,linkcolor=blue,linktocpage,pdfpagelabels, bookmarksnumbered,bookmarksopen]{hyperref}

\numberwithin{equation}{section}

\newcommand{\RR}{\mathbb{R}}

\newcommand{\TT}{\bm{T}}
\newcommand{\hx}{\bm{x}_{\perp}}
\newcommand{\xx}{\bm{x}}
\newcommand{\yy}{\bm{y}}
\newcommand{\XX}{\bm{\chi}}
\newcommand{\ea}{\bm{e}}
\newcommand{\hy}{\bm{y}_{\perp}}

\newcommand{\vo}{\bm{\omega}}
\newcommand{\vv}{\bm{v}}

\title{Evolution of viscous vortex filaments and soliton-type propagation}

\author{Marco A. Fontelos}
\thanks{M. A. F. has been supported by the MICINN (Spain) under the Research Grant PID2023-150166NB-I00.}
\address{Marco A. Fontelos\\Instituto de Ciencias Matemáticas (ICMAT), CSIC-UAM-UCM-UC3M, Madrid, Spain}
\email{marco.fontelos@icmat.es}

\author{Mikel Ispizua}
\thanks{M. I. has been supported by the MICINN (Spain) under the Research Grant PID2024-156169NB-I00.}
\address{Mikel Ispizua\\Department of Mathematics, EHU, Bilbao, Spain}
\email{mikel.ispizua@ehu.eus}

\author{Luis Vega}
\thanks{L. V. has been supported by MICINN (Spain) projects Severo Ochoa CEX2021-001142 and PID2024-156169NB-I00, and by Eusko Jaurlaritza project IT1615-22 and the BERC program.}
\address{Luis Vega\\Department of Mathematics, EHU, Bilbao, Spain\\Basque Center for Applied Mathematics (BCAM), Bilbao, Spain}
\email{lvega@bcamath.org}

\begin{document}

\begin{abstract}
We study the evolution of a viscous incompressible fluid whose initial vorticity is supported on a smooth open curve. Following the ideas in \cite{FontelosVega}, we show that, for $\nu t\ll 1$ and sufficiently small Reynolds number $\Gamma/\nu$, the vorticity is described at leading order by a Lamb--Oseen type vortex concentrated around a curve evolving according to the binormal flow predicted by the localized induction approximation. The solution is written as an explicit leading-order profile plus a lower-order perturbation, which is controlled in a Morrey $\mathcal{M}^{\infty}$ norm.

Then, we apply this construction to the Hasimoto soliton. In this case, the estimates are uniform with respect to the torsion parameter, allowing us to consider a large-torsion regime in which the soliton undergoes a macroscopic displacement. We show that the corresponding Navier--Stokes solution contains a localized portion of the kinetic-energy distribution, associated with the binormal velocity, which remains concentrated inside a moving physical region and undergoes an order-one displacement during an admissible time interval.
\end{abstract}

\subjclass[2020]{Primary 35Q30, 76D05; Secondary 76B47, 35Q55}
\keywords{Vortex filaments, Navier--Stokes equations, binormal flow, Lamb--Oseen vortex, Hasimoto soliton, Morrey spaces}

\maketitle

\section{Introduction}

\subsection{Previous work}

The behavior of fluids constitutes one of the oldest and most challenging problems in mathematical physics. Vorticity filaments—one-dimensional structures where vorticity is concentrated—emerge as fundamental entities to understand phenomena ranging from turbulence generation to energy transfer within a fluid. Their formation, structure, and evolution have been central topics since the early days of fluid dynamics, addressed by figures such as Helmholtz, Lord Kelvin, and Kirchhoff (see \cite{SaffmanBook} for an overview and \cite{Darrigol} for historical context).

In this work we study the three-dimensional flow of a viscous incompressible fluid when the initial vorticity is concentrated along a curve $\XX(s)$ in $\RR^3$ parametrized by arclength $s$. In this setting, the governing equation of motion is the Navier--Stokes equation:
\begin{equation}\tag{N-S}\label{vorticity_NS}
\begin{cases}
\partial_t \boldsymbol{\omega} + (\bm{v} \cdot \nabla)\boldsymbol{\omega}= (\boldsymbol{\omega} \cdot \nabla)\bm{v} + \nu \Delta \boldsymbol{\omega}\quad&\text{in }\mathbb{R}^3\times(0,\infty),\\
\nabla\cdot\vv=0, \quad \bm{\omega}(t=0)=\Gamma\delta_{\XX}\TT(s)\quad&\text{in }\mathbb{R}^3,
\end{cases}
\end{equation}
where $\vv$ and $\vo$ represent the velocity and the vorticity of the fluid respectively, $\nu>0$ the viscosity, $\Gamma$ the circulation (or vortex strength), $\TT(s)$ the tangent vector of $\XX(s)$ and $\delta_{\XX}$ the Dirac delta measure supported on $\XX$. The vorticity is defined by $\vo=\nabla\wedge\vv$ and the velocity field is recovered from $\vo$ through the Biot--Savart law
\begin{equation}\label{definicion biot savart}
\vv(\xx)=K*\bm{\omega}(\xx)=\frac{1}{4\pi}\int_{\RR^3}\bm{\omega}(\bm{y})\wedge\frac{\xx-\bm{y}}{|\xx-\bm{y}|^3}{\rm d}\bm{y}.
\end{equation}
In the inviscid case, when $\nu=0$, the filament evolves under the velocity field generated by \eqref{definicion biot savart}, which blows up at every point on the filament. By replacing the zero-thickness filament by a tube of radius $\epsilon$, the singularity is removed and the velocity is given, up to $O(1)$ terms, by
\begin{equation}\label{definicion binormal flow 0}
\vv=-\frac{\Gamma}{4\pi}\kappa(s)\log(\epsilon)\bm{b}(s,t),
\end{equation}
where $\bm{b}(s,t)$ and $\kappa(s,t)$ denote the binormal vector of the Frenet--Serret frame and the curvature of the filament at position $s$ and time $t$, respectively.
This desingularization of \eqref{definicion biot savart} is the so-called \emph{localized induction approximation} (LIA), introduced by Da Rios in 1906 in \cite{DaRios}, see also \cite{RiccaNature}. In the LIA, one keeps only the logarithmic contribution produced by the local part of the filament and neglects the $O(1)$ terms, which contain the nonlocal influence of the distant parts of the curve. In this approximation, the velocity of each point of the filament is therefore determined only by the local geometry, through the curvature and the binormal direction.

The study of the binormal flow as a geometric evolution law and, in particular, the existence of coherent structures under its influence, is a topic of mathematical interest by itself. A fundamental result in this context is due to Hasimoto \cite{Hasimoto72}, who introduced a transformation relating the binormal flow to the cubic nonlinear Schrödinger equation (NLS). This connection makes it possible to construct explicit shape-preserving structures, including the solitons found by Hasimoto \cite{Hasimoto72} as well as the helical and screw-wave solutions studied by Kida \cite{Kida1981,Kida1982}. See also \cite{BanicaVega} for a recent survey on NLS and the binormal flow. These explicit solutions will be relevant in the final part of the paper, where we consider a Hasimoto soliton as a reference curve around which the vorticity is concentrated.

Concerning the rigorous study of vortex filaments in Euler flows, the existence of coherent vortex structures has been established in several important cases, including finite-core vortex rings \cite{Fraenkel,BergerFraenkel,FriedmanTurkington, Norbury}, traveling helices \cite{DavilaDelPinoMussoWei2022}, and leapfrogging vortex rings \cite{DavilaDelPinoMussoWei2024}. More recently, Jerrard and Seis \cite{JS2012} derived the binormal flow as the evolution law for vortex filaments of thickness $\varepsilon$ for general initial data under some assumptions on the distribution of the vorticity inside the core of the vortex.

In the viscous setting, Giga and Miyakawa proved in \cite{GigaMiyakawa} the existence of solutions to \eqref{vorticity_NS} with measure-valued initial data in Morrey spaces, under a smallness assumption on the initial data, which in this context corresponds to $\frac{\Gamma}{\nu}$ being sufficiently small. This smallness condition was later removed by Gallay and \v{S}ver\'ak for axisymmetric vortex rings. In \cite{GallaySverak}, Gallay and \v{S}ver\'ak showed that the vorticity field $\omega$ becomes smooth for any positive time and describes a vortex ring of thickness $\sqrt{\nu t}$ translating along its symmetry axis with a precise velocity predicted by Kelvin and later justified by Saffman  (see \cite{SaffmanBook}). In the regime $\sqrt{\nu t}\ll R$, where $R$ denotes the radius of the ring, the leading-order behavior of this velocity is given by the binormal flow. Finally, Bedrossian, Germain and Harrop-Griffiths \cite{BedrossianGermain HG} prove global solutions for initial data of arbitrary circulation which are perturbations of an infinite straight line or a smooth closed curve.

The simplest example of a viscous vortex is the \emph{Lamb--Oseen vortex}, which arises as an exact solution of \eqref{vorticity_NS} with initial condition
\[
\vo(\xx,0)=\Gamma\,\delta(x_1,x_2)\,\ea_3
\]
in Cartesian coordinates $\xx=(x_1,x_2,x_3)\in\RR^2\times\RR$. In this setting, the Navier--Stokes system reduces to the two-dimensional heat equation in the $(x_1,x_2)$-plane
\[
\partial_t\vo=\nu\Delta_{(x_1,x_2)}\vo,
\qquad (x_1,x_2,t)\in\RR^2\times\RR_+.
\]
The solution is explicit and is given by
\begin{align*}
\vo(r,t)&=\frac{\Gamma}{4\pi\nu t}e^{-\frac{r^2}{4\nu t}}\,\ea_3,\\
\vv(r,t)&=\frac{\Gamma}{2\pi r}\left(1-e^{-\frac{r^2}{4\nu t}}\right)\ea_{\theta},
\end{align*}
where $(r,\theta)$ denote polar coordinates in the $(x_1,x_2)$-plane.

In \cite{FontelosVega}, the first and third authors give a precise description of the solution of \eqref{vorticity_NS} when the initial data is a smooth, non-self-intersecting closed curve $\XX(s,0)$. In that work, this description is obtained by writing the vorticity as an expansion in the viscous scale $\sqrt{\nu t}$. In the regime where $\sqrt{\nu t}$ is sufficiently small, the leading-order term is a Lamb--Oseen vortex concentrated around a curve $\XX(s,t)$ evolving according to the binormal flow
\begin{equation}\label{definicion binormal flow}
\XX_t(s, t)=-\frac{\Gamma}{4\pi}\log{(\sqrt{\nu t})}\XX_s(s, t)\wedge\XX_{ss}(s, t),
\end{equation}
where here the thickness of the tube in \eqref{definicion binormal flow 0} is substituted by the viscous scale $\sqrt{\nu t}$. The next-order vorticity term is still singular as $\nu t\to0$, but is of lower order and contains the first local correction due to the curvature of the filament. This reflects the fact that, at small scales, the filament is described to leading order by a straight Lamb--Oseen vortex with curvature corrections, while the curve itself evolves macroscopically according to the binormal flow.

In order to state the main result of \cite{FontelosVega}, we first introduce some notation. For $R>0$, we define the tubular neighborhood of radius $R$ around $\XX(s,t)$ as
\begin{equation*}
\mathcal{T}_R(t):=\left\{\xx\in\mathbb{R}^3:\exists s \text{ such that } |\xx-\XX(s,t)|<R\right\}.
\end{equation*}
We say that $\mathcal{T}_R$ is regular if for every $\xx\in\mathcal{T}_R(t)$, there exists a unique point $\XX(s_0,t)$ on the curve such that
\[
|\xx-\XX(s_0,t)|=\min_s|\xx-\XX(s,t)|.
\]
As a sufficient condition for the existence of a regular neighborhood around $\XX(s, t)$ we assume (see Section \ref{subsec: filament and tubular neighborhood} for further details) that there exists an $R$ uniform in $t\in[0, T]$ such that
\begin{equation}\label{tubular intro}
R:=\frac{1}{4}\min\left\{\frac{1}{\kappa_{\max}},\frac{1}{2}\inf_{\substack{t\in[0,T]\\ d_{\rm arc}(s,s')>\pi/\kappa_{\max}}}|\XX(s,t)-\XX(s',t)|\right\},
\end{equation}
where
\[
\kappa_{\max}:=\sup_{t\in[0,T]}\sup_s|\partial_s\TT(s,t)|
\]
and
\[
d_{\rm arc}(s,s') :=
\begin{cases}
|s-s'|, & \text{if the curve is open},\\[6pt]
\displaystyle \min\{\,|s-s'|,\; L-|s-s'|\,\}, & \text{if the curve is closed of total length } L.
\end{cases}
\]
Inside this regular tubular neighborhood, local cylindrical coordinates $(r,\theta,s)$ are well defined. Using the Frenet--Serret frame $\{\TT(s,t),\bm{n}(s,t),\bm{b}(s,t)\}$, any point $\xx\in\mathcal{T}_R(t)$ can be written as
\[
\xx=\XX(s,t)+r\cos\theta\,\bm{n}(s,t)+r\sin\theta\,\bm{b}(s,t).
\]
In the statement, the following self-similar radial variable is also used:
\begin{equation}\label{def: selfsimilar variable}
\rho=\frac{r}{\sqrt{\nu t}}.
\end{equation}
The result proven in \cite{FontelosVega} can be stated as follows:
\begin{thm}[Fontelos--Vega]\label{thm: fontelos vega}
Let $\XX\in C^3$ be a closed curve in $\mathbb R^3$, evolving under the binormal flow, with initial data $\XX(s,0)=\XX_0(s)$. Assume that the radius $R$ of the tubular neighborhood given by \eqref{tubular intro} is well defined. Assume also that $\Gamma/\nu$ and $\nu T$ are sufficiently small. Then there exists a solution $\bm{\omega}(\xx,t)$ to \eqref{vorticity_NS}, defined on $(0,T)$, such that, for $\xx\in\mathcal{T}_R(t)$, its expression in the local coordinates $(r,\theta,s)$ is given by
\begin{align}
\bm{\omega}(r,\theta,s,t)&=\frac{1}{\nu t}\frac{\Gamma}{4\pi}e^{-\rho^2/4}\,\TT(s,t)+\frac{1}{(\nu t)^{1/2}}\frac{\Gamma\kappa}{8\pi}\rho e^{-\rho^2/4}\cos\theta\,\TT(s,t)\notag\\
&\quad+\frac{1}{(\nu t)^{1/2}}\left(\Omega^{c,(2)}_1(\rho)\cos\theta+\Omega^{s,(2)}_1(\rho)\sin\theta\right)\TT(s,t)+\tilde{\bm{\omega}}(r,\theta,s,t),
\label{eq:vorticity-expansion}
\end{align}
where
\[
\left|\Omega^{s,c,(2)}_1(\rho)\right|\leq C\frac{\Gamma^2}{\nu}(\rho+\rho^2)e^{-\rho^2/4},
\]
and
\[
\|\tilde{\bm{\omega}}\|_{L^2}^2(t)+\nu\int_0^t\|\nabla\tilde{\bm{\omega}}\|_{L^2}^2(t')\,dt'\leq C\Gamma^2(\nu t)|\log(\nu t)|^2.
\]
Here $C$ is a suitable constant independent of the parameters.
\end{thm}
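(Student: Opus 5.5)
We construct the solution as an explicit approximate profile plus a remainder, and close an energy estimate for the remainder using the smallness of $\Gamma/\nu$ and of $\nu T$.

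\textbf{Step 1: the approximate solution.} We first build $\bm{\omega}_{app}$ in the tubular neighborhood $\mathcal{T}_R(t)$. We work in the moving coordinates $(\rho,\theta,s)$, with $\rho$ as in \eqref{def: selfsimilar variable}, attached to the curve $\XX(s,t)$ evolving by \eqref{definicion binormal flow}. We take $\bm{\omega}_{app}$ to be the Lamb--Oseen profile $\frac{\Gamma}{4\pi\nu t}e^{-\rho^2/4}\TT$, plus the curvature correction $\frac{\Gamma\kappa}{8\pi(\nu t)^{1/2}}\rho e^{-\rho^2/4}\cos\theta\,\TT$, plus the first-order corrections $\Omega^{c,s,(2)}_1$.

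We then write \eqref{vorticity_NS} in these coordinates and expand in powers of $\sqrt{\nu t}$:
\begin{itemize}
\item The metric factor $1-\kappa r\cos\theta$ generates the $\kappa\rho\cos\theta$ term. One checks directly that this term solves the linearized heat-type equation at order $(\nu t)^{-1/2}$.
\item The Biot--Savart velocity of the leading profile, expanded near the curve, contains the azimuthal Lamb--Oseen velocity, a logarithmic term $-\frac{\Gamma}{4\pi}\kappa\log\sqrt{\nu t}\,\bm b$, and $O(1)$ terms.
\item Choosing the curve to move by the binormal flow cancels the logarithmic transport term exactly. This is where \eqref{definicion binormal flow} enters.
\item The remaining $O(1)$ terms, together with the nonlinear interaction at order $\Gamma^2/\nu$, force the modes $\cos\theta$ and $\sin\theta$. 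These give ODEs in $\rho$ for $\Omega^{c,s,(2)}_1$, which are linearizations of the 2D vorticity equation around Lamb--Oseen restricted to angular mode one.
\end{itemize}
We solve these ODEs with Gaussian-decaying forcing. Using the weighted space $L^2(e^{\rho^2/4})$ and the known spectral properties of the linearization around Lamb--Oseen, we obtain the bound $(\rho+\rho^2)e^{-\rho^2/4}\Gamma^2/\nu$. Finally, we cut off $\bm{\omega}_{app}$ smoothly outside $\mathcal{T}_R$ and correct it to be divergence-free, e.g. by adding a gradient or using a Biot--Savart consistent vector potential. Both contributions are exponentially small in $R^2/\nu t$.

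\textbf{Step 2: the residual.} We compute $\mathcal{R}=\partial_t\bm{\omega}_{app}+\dots$. The terms cancelled in Step 1 are absent by construction. We expect the residual to be of size $\Gamma(\nu t)^{-1/2}\cdot O(\kappa^2,\kappa_s)$ times Gaussian profiles, with possible $|\log(\nu t)|$ factors coming from the velocity expansion. Hence $\|\mathcal{R}\|_{L^2}\lesssim \Gamma|\log \nu t|$, up to powers of $t$ consistent with the final bound. The $C^3$ regularity of $\XX$ is used here, since $\kappa_s$ and the binormal flow of the curvature appear.

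\textbf{Step 3: energy estimate for the remainder.} The remainder $\tilde{\bm{\omega}}$ satisfies
\[
\partial_t\tilde{\bm\omega}+\vv_{app}\cdot\nabla\tilde{\bm\omega}+\tilde{\vv}\cdot\nabla\bm\omega_{app}-(\dots)=\nu\Delta\tilde{\bm\omega}-\mathcal R+N(\tilde{\bm\omega}),
\]
with zero initial data. We take the $L^2$ inner product with $\tilde{\bm\omega}$:
\begin{itemize}
\item The transport by $\vv_{app}$ drops out since $\vv_{app}$ is divergence-free.
\item The linear terms $\tilde{\vv}\cdot\nabla\bm\omega_{app}$ and the stretching terms are the dangerous ones. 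The bounds $\|\nabla\bm\omega_{app}\|$, $\|\nabla \vv_{app}\|_{L^\infty}\sim\Gamma/(\nu t)$ give contributions like $\frac{\Gamma}{\nu t}\|\tilde{\bm\omega}\|^2$. These are borderline for Gronwall.
\item To absorb them we use a Hardy-type inequality, $\int |\tilde{\bm\omega}|^2/(\nu t)\,e^{-\rho^2/4}\lesssim \|\nabla\tilde{\bm\omega}\|^2+\dots$, or more directly we bound by $\frac{\Gamma}{\nu}\big(\nu\|\nabla\tilde{\bm\omega}\|^2+\|\tilde{\bm\omega}\|^2/t\big)$ and use that $\Gamma/\nu$ is small.
\item The nonlinearity is controlled via Gagliardo--Nirenberg, again using the smallness of the data.
\end{itemize}
Integrating the residual contribution in time yields
\[
\|\tilde{\bm\omega}\|^2_{L^2}+\nu\int_0^t\|\nabla\tilde{\bm\omega}\|^2_{L^2}\lesssim \Gamma^2\nu t|\log\nu t|^2 .
\]

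\textbf{Existence and the main obstacle.} Existence of the solution itself follows from Giga--Miyakawa, since $\Gamma/\nu$ is small. We then identify it with $\bm\omega_{app}+\tilde{\bm\omega}$ by uniqueness in Morrey spaces.

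The main obstacle is Step 3, in particular the borderline $1/t$ terms coming from the linearization around the Lamb--Oseen profile. The smallness of $\Gamma/\nu$ is exactly what makes them absorbable; without it one would need the Gallay--Wayne weighted-space spectral analysis.
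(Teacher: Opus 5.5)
The paper does not reprove this theorem. It quotes it from \cite{FontelosVega} and only exhibits its mechanism. First, in the proof of Lemma \ref{ecuacion termino de fuerza 3/2}, $\XX_t\cdot\nabla\vo_0$ cancels against $\Omega^\eta_r r_t\TT$. Second, in Remark \ref{rem: asymptotics fontelos vega}, the curvature term is the solution of the normal-plane heat equation forced by the metric part $-\nu h^{-1}(\ea_r\cdot\TT_s)\Omega_r\eta_R\TT$ of $\nu\Delta\vo_0$. Your Steps 1--2 match this scheme.

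\textbf{The gap is in Step 3.} It is not the $1/t$ scaling you flag. The terms $\int(\tilde\vo\cdot\nabla)\vv_{app}\cdot\tilde\vo$ and $\int(\vo_{app}\cdot\nabla)\tilde\vv\cdot\tilde\vo$ are bounded by $C\frac{\Gamma}{\nu t}\|\tilde\vo\|_{L^2}^2$, and that is harmless. Indeed, $y'\le\frac{\epsilon}{t}y+\|\mathcal R\|_{L^2}$ with $\epsilon<\frac12$ still gives $y=\|\tilde\vo\|_{L^2}\lesssim\Gamma\sqrt{\nu t}\,|\log\nu t|$, provided $\|\mathcal R\|_{L^2}\lesssim\Gamma\nu(\nu t)^{-1/2}|\log\nu t|$ (up to geometric factors). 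That is what Step 2 must deliver; the residual sizes you state there are not dimensionally consistent.

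The obstruction is $\int(\tilde\vv\cdot\nabla)\vo_{app}\cdot\tilde\vo$. Here $|\nabla\vo_{app}|\sim\Gamma(\nu t)^{-3/2}$, not $\Gamma/(\nu t)$, on a tube of cross-section $\nu t$. A bound by $\frac{\Gamma}{\nu t}\|\tilde\vo\|_{L^2}^2$ therefore amounts to $\int(\nu t)^{-1}e^{-\rho^2/4}|\tilde\vv|^2\lesssim\|\tilde\vo\|_{L^2}^2$. That is an $\dot H^1$ trace on a codimension-two set, and it fails logarithmically. A counterexample:
\begin{itemize}
\item Along a straight piece of length $\ell$, take the planar field with stream function $Uy\,\phi(r)$, where $\phi=1$ for $r\le\sqrt{\nu t}$ and $\phi=\log(R/r)/\log(R/\sqrt{\nu t})$ for $\sqrt{\nu t}\le r\le R$.
\item Its velocity is a constant transverse vector of size $U$ in the core, while its vorticity has squared $L^2$ norm $\sim U^2\ell/|\log\nu t|$.
\item Add the core vorticity $a\cos\varphi\,\chi(\rho)\TT$ with $a^2\nu t=U^2/|\log\nu t|$.
\item Then your term exceeds $\frac{\Gamma}{\nu}\bigl(\nu\|\nabla\tilde\vo\|_{L^2}^2+\|\tilde\vo\|_{L^2}^2/t\bigr)$ by a factor $|\log\nu t|^{1/2}$.
\end{itemize}
The same construction refutes the Hardy-type inequality you invoke, since Hardy with weight $\mathrm{dist}^{-2}$ fails in codimension two. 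Global three-dimensional tools (Agmon, or $L^6$--$L^3$ H\"older) give non-integrable coefficients $t^{-4/3}$ and $t^{-7/6}$. Even the $|\log\nu t|^{1/2}$ loss is fatal: the Gronwall integrating factor becomes $\exp\bigl(\tfrac{2\epsilon}{3}|\log\nu t|^{3/2}\bigr)$, which beats every power of $t$. No smallness of $\Gamma/\nu$ compensates.

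\textbf{What is missing} is independent control of $\tilde\vv$ on the core. This is exactly where the present paper leaves the $L^2$ setting for its own result. In the proof of Theorem \ref{Teorema vorticidad}, the quadratic terms are written as $\partial_i\bm W^i$, so the derivative falls on the heat kernel. The dangerous term is then bounded by $\|\vo_0\|_q\|K*\tilde\vo\|_\infty$ via Proposition \ref{proposition Giga Miya convolucion biot savart}(ii) with $q<3<2q$, so every linear term enters as $C\frac{\Gamma}{\nu}\|\tilde\vo\|_X$.

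If you want to stay with energy estimates, one possible repair is to carry $\|\tilde\vv\|_{L^2}$ as well; this is finite for a closed curve. After integrating by parts, the bad term is at most $\|\vo_{app}\|_\infty\|\tilde\vv\|_{L^2}\|\nabla\tilde\vo\|_{L^2}$. In the velocity energy inequality, $(\vo_{app}\wedge\tilde\vv)\cdot\tilde\vv$ vanishes pointwise. The scalings $\|\tilde\vo\|_{L^2}\sim t^{1/2}$ and $\|\tilde\vv\|_{L^2}\sim t$ look compatible with closing for small $\Gamma/\nu$.

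Two further points.
\begin{itemize}
\item The curvature term is not solenoidal. With $g(r)=\frac{\Gamma}{8\pi}\frac{r}{\nu t}e^{-\rho^2/4}$, one has $\nabla\cdot\bigl(g(r)(\ea_r\cdot\TT_s)\TT\bigr)=h^{-1}g(r)\,\ea_r\cdot\TT_{ss}$. This is of size $(|\kappa_s|+\kappa|\tau|)\,\Gamma(\nu t)^{-1/2}\rho e^{-\rho^2/4}$, not exponentially small as you claim. The transverse corrector $\frac{\Gamma}{4\pi}e^{-\rho^2/4}(\alpha_s\ea_1+\beta_s\ea_2)$ cancels it to leading order; otherwise $\tilde\vo$ is not divergence-free and Step 3 must be adapted.
\item An $L^2$ bound does not put $\vo_{app}+\tilde\vo$ in the Giga--Miyakawa uniqueness class. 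That class is defined by smallness of time-weighted Morrey norms such as $\|\cdot\|_{T,q}$ and $\|\cdot\|_{T,2q}$ with $2q>3$, which $L^2$ does not control. Construct $\tilde\vo$ directly (approximation plus the a priori estimate) rather than identifying it a posteriori.
\end{itemize}
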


\subsection{Main results}
One of the main contributions with respect to Theorem \ref{thm: fontelos vega} is that, in this work, open filaments are included. More precisely, we consider filaments which, in addition to satisfying the tubular neighborhood condition \eqref{tubular intro}, satisfy a chord--arc condition uniformly on $(0,T)$. Namely, there exists $c_0>0$, independent of $t$, such that
\begin{equation}\label{uniform_chord_arc_condition_intro}
|\XX(s,t)-\XX(s',t)|\geq c_0 |s-s'|
\end{equation}
for every $t\in[0,T]$ and every $s,s'\in\RR$. See Section \ref{subsec: filament and tubular neighborhood} for details. We remark that, although throughout the paper we work with open filaments for simplicity, the results could be extended to the closed case without substantial changes.

Another important contribution of the present work concerns the norm in which the error is estimated. In \cite{FontelosVega}, although the vorticity expansion is written pointwise in local coordinates, the remainder is controlled through an $L^2$-based energy estimate. Here, instead, the vorticity error is controlled in the Morrey $\mathcal{M}^\infty$ norm, which, for functions, is equivalent to the usual $L^\infty$ norm, giving a pointwise control of the remainder. Furthermore, for infinite-length filaments the energy-type norm used in \cite{FontelosVega} is no longer natural, since it is not finite. Working in the Morrey spaces used in \cite{GigaMiyakawa} overcomes this difficulty, as these spaces are well adapted to one--dimensional vorticity distributions.

Before stating the result, let us introduce the Morrey norms used in the theorem and throughout the paper. These norms will be discussed in more detail in Section \ref{sec: existence result}, where we study the existence of the solutions under consideration. Let $\mu$ be a Radon measure on $\RR^3$ and $1\leq p\leq\infty$. Following \cite{GigaMiyakawa}, we define
\[
\|\mu\|_p:=\sup_{\xx\in\RR^3,\ r>0}r^{-\frac{3}{p'}}|\mu|(B(\xx,r)),
\]
with $p'=\frac{p}{p-1}$. Here $|\mu|(B(x,r))$ denotes the total variation of $\mu$ in the ball of radius $r$ centered at $x$. For a time-dependent family $\mu(t)$, the corresponding time-weighted norm is
\[
\|\mu\|_{T,p}:=\sup_{0<t<T}(\nu t)^{\frac12-\frac{3}{2p}}\|\mu(t)\|_p.
\]

Finally, for practical reasons, throughout this work we use the \emph{parallel frame} $\{\TT,\ea_1,\ea_2\}$ attached to $\XX$, instead of the usual Frenet--Serret frame. Thus, a point $\xx$ in the tubular neighborhood $\mathcal{T}_R(t)$ of $\XX$ is written as
\[
\xx=\XX(s,t)+r\cos\varphi\,\ea_1(s,t)+r\sin\varphi\,\ea_2(s,t).
\]
The definition of this frame and the convenience of this choice are discussed in Section \ref{subsec: Parallel frame and local coordinates}. Now, let us state the main result of the present paper.
\begin{thm}\label{Teorema vorticidad intro}
Let $\XX\in C^3$ be an open curve in $\mathbb R^3$, evolving under the binormal flow, with initial data $\XX(s,0)=\XX_0(s)$. Assume that the radius $R$ of the tubular neighborhood given by \eqref{tubular intro} is well defined and that there exists $c_0>0$ such that the chord-arc condition \eqref{uniform_chord_arc_condition_intro} holds uniformly in time on $[0,T]$. Assume also that $\Gamma/\nu$ and $\nu T$ are sufficiently small. Then there exists a solution $\bm{\omega}(\xx,t)$ to \eqref{vorticity_NS}, defined on $(0,T)$, such that, for any $\xx\in\mathcal{T}_{\frac{R}{2}}(t)$, its expression in the local coordinates $(r,\varphi,s)$ is given by
\[
\bm{\omega}(r,\varphi, s,t)=\frac{\Gamma}{4\pi}\frac{e^{-\frac{\rho^2}{4}}}{\nu t}\TT(s,t)+\tilde{\bm{\omega}}(r,\varphi, s,t).
\]
Moreover, for every $\frac32\leq p\leq\infty$,
\begin{equation}\label{norma morrey introduccion}
\sup_{0<t<T}(\nu t)^{\frac12-\frac{3}{2p}}\|\tilde{\bm{\omega}}(t)\|_p\leq C\mathcal{C}_F(\XX,\Gamma,\nu T),
\end{equation}
where $\mathcal{C}_F(\XX,\Gamma,\nu T)$ is an explicit quantity depending on the geometry of the filament and on the parameters of the problem. In addition, for every $\frac32\leq p<3$,
\[
\|\tilde{\bm{\omega}}(t)\|_p\longrightarrow0\qquad\text{as }t\to0.
\]
Furthermore, the velocity induced by the perturbation satisfies
\begin{equation}\label{norma velocidad morrey intro}
\sup_{0<t<T}\|K*\tilde{\bm{\omega}}(t)\|_\infty\leq C\mathcal{C}_F(\XX,\Gamma,\nu T).
\end{equation}
\end{thm}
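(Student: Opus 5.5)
We construct the solution as an explicit profile plus a perturbation, $\bm{\omega}=\bm{\omega}_0+\tilde{\bm{\omega}}$, and obtain $\tilde{\bm{\omega}}$ from a fixed-point argument in the Giga--Miyakawa framework. This framework only requires smallness of $\Gamma/\nu$.

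\textbf{Step 1: the profile.} Fix a smooth cutoff $\eta(r)$ equal to $1$ on $\mathcal{T}_{R/2}$ and supported in $\mathcal{T}_R$. Using the parallel frame coordinates $(r,\varphi,s)$, define
\[
\bm{\omega}_0=\eta\,\frac{\Gamma}{4\pi}\frac{e^{-\rho^2/4}}{\nu t}\TT(s,t).
\]
This field is not divergence free. We correct it with a small term, for instance by taking the curl of the corresponding Lamb--Oseen-type vector potential, so that $\nabla\cdot\bm{\omega}_0=0$ up to a controlled error. The chord--arc condition \eqref{uniform_chord_arc_condition_intro} is what gives $\|\bm{\omega}_0(t)\|_1\lesssim\Gamma$ uniformly, since $|\bm{\omega}_0|(B(\xx,r))\lesssim\Gamma r/c_0$. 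It also gives the weighted bounds $(\nu t)^{\frac12-\frac3{2p}}\|\bm{\omega}_0(t)\|_p\lesssim\Gamma$ by interpolating between $p=1$ and $p=\infty$. Finally, $\bm{\omega}_0(t)\rightharpoonup\Gamma\delta_{\XX_0}\TT$ as $t\to0$.

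\textbf{Step 2: the residual.} Next we compute
\[
\mathcal{R}=\partial_t\bm{\omega}_0+(\bm{v}_0\cdot\nabla)\bm{\omega}_0-(\bm{\omega}_0\cdot\nabla)\bm{v}_0-\nu\Delta\bm{\omega}_0,\qquad \bm{v}_0=K*\bm{\omega}_0.
\]
The Laplacian in the tube coordinates splits into the 2D heat operator, which cancels the leading part of $\partial_t\bm{\omega}_0$, plus curvature terms of size $\kappa(\nu t)^{-1/2}$ times Gaussians. This is where the parallel frame helps: no torsion terms appear.

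For $\bm{v}_0$ near the filament we use the expansion of the Biot--Savart integral. It consists of the straight Lamb--Oseen field $\frac{\Gamma}{2\pi r}(1-e^{-\rho^2/4})\ea_\theta$, plus the local term $-\frac{\Gamma}{4\pi}\kappa\log\sqrt{\nu t}\,\bm{b}$, plus an $O(\Gamma\kappa)$ nonlocal remainder. The tail of that remainder is bounded using the chord--arc condition.

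Transport by the logarithmic term cancels exactly against $\partial_t$ acting through $\XX_t$, because $\XX$ follows \eqref{definicion binormal flow}. Transport by the azimuthal Lamb--Oseen field annihilates the radial profile. After these cancellations, $\mathcal{R}$ consists of terms like $\frac{\Gamma\kappa}{(\nu t)^{3/2}}p(\rho)e^{-\rho^2/4}$ and $\frac{\Gamma^2\kappa}{\nu(\nu t)^{3/2}}(\cdots)$, together with terms supported on the cutoff region, which are bounded in terms of $R$. Measured in the time-weighted Morrey norms, $\mathcal{R}$ therefore gains a factor $(\nu t)^{1/2}\kappa|\log\nu t|$ relative to the profile. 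This factor is where the explicit quantity $\mathcal{C}_F(\XX,\Gamma,\nu T)$ comes from.

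\textbf{Step 3: fixed point.} The perturbation satisfies
\[
\tilde{\bm{\omega}}(t)=-\int_0^te^{\nu(t-t')\Delta}\Big[\nabla\cdot\big(\bm{v}\otimes\bm{\omega}-\bm{\omega}\otimes\bm{v}\big)-\nabla\cdot(\bm{v}_0\otimes\bm{\omega}_0-\bm{\omega}_0\otimes\bm{v}_0)+\mathcal{R}\Big]\,dt'.
\]
We solve it by contraction in the space normed by $\sup_{p}\|\cdot\|_{T,p}$ for $\frac32\le p\le\infty$. The ingredients are:
\begin{itemize}
\item Giga--Miyakawa's heat-kernel Morrey estimates $\|\nabla e^{\nu t\Delta}f\|_q\lesssim(\nu t)^{-\frac12-\frac32(\frac1p-\frac1q)}\|f\|_p$;
\item the Hardy--Littlewood--Sobolev-type bound $\|K*\bm{\omega}\|_{q}\lesssim\|\bm{\omega}\|_p$ with $\frac1q=\frac1p-\frac13$.
\end{itemize}
The bilinear terms gain a factor $\Gamma/\nu$. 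The linear terms in $\tilde{\bm{\omega}}$ with coefficients $\bm{\omega}_0,\bm{v}_0$ are also of size $\Gamma/\nu$, because $\|\bm{\omega}_0\|_{T,p}\lesssim\Gamma$, and they are absorbed by smallness. This yields \eqref{norma morrey introduccion}.

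The velocity bound \eqref{norma velocidad morrey intro} follows by splitting the Biot--Savart kernel into near and far parts and interpolating between $p<3$ and $p>3$. The decay $\|\tilde{\bm{\omega}}(t)\|_p\to0$ for $p<3$ holds because the weight $(\nu t)^{\frac12-\frac3{2p}}$ is then a negative power of $t$, while the source term carries an extra factor $(\nu t)^{1/2}|\log\nu t|$.

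\textbf{Main obstacle.} The delicate step is Step 2, in particular the precise asymptotics of $K*\bm{\omega}_0$ near an open, infinite filament. We must show that, after subtracting the logarithmic binormal term, the remainder is bounded uniformly in $s$. This requires splitting the arc integral into $|s-s'|<R$, handled by a Taylor expansion in the parallel frame, and $|s-s'|>R$, where the chord--arc condition gives a tail of order $\int_R^\infty c_0^{-2}|s-s'|^{-2}\,ds'$.

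A further subtlety is that the $\kappa|\log\nu t|$ terms must land in a norm compatible with the Duhamel integral. If a naive $L^\infty$ treatment fails near $t=0$, we will first try putting $\mathcal{R}$ into divergence form $\nabla\cdot F$, which gains the needed half power of $\nu t$.
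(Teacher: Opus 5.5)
Your overall route is the paper's: the profile $\vo_0=\Omega^\eta\TT$, the near/far splitting of the Biot--Savart integral with a chord--arc tail (Lemma \ref{lem:BS-in-s}, Proposition \ref{Lemma_v_o}), the heat and log-transport cancellations in the defect, a Duhamel contraction in Giga--Miyakawa's weighted Morrey spaces, and the $L^\infty$ velocity bound from Proposition \ref{proposition Giga Miya convolucion biot savart}(ii). However, your scaling bookkeeping is off by exactly the half power the theorem is about, and as written the argument does not close.

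\textbf{Scaling of the profile.} Since $\mathcal M^1$ is the space of finite measures, $\|\vo_0(t)\|_1=\infty$ for an infinite filament. The chord--arc bound $|\vo_0|(B(\xx,r))\lesssim\Gamma r$ is the $\mathcal M^{3/2}$ bound. Interpolating it with $\|\vo_0\|_\infty\sim\Gamma/(\nu t)$ gives $\|\vo_0(t)\|_p\lesssim\Gamma(\nu t)^{-1+\frac{3}{2p}}$ (Lemma \ref{lema 5.5}). Hence $(\nu t)^{\frac12-\frac{3}{2p}}\|\vo_0(t)\|_p\sim\Gamma(\nu t)^{-1/2}$, i.e.\ $\vo_0\notin X_{T,p}$. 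The weight in \eqref{norma morrey introduccion} sits half a power below scale invariance precisely because it asserts that $\tilde\vo$ is $(\nu t)^{1/2}$ less singular than the profile.

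With the correct scaling the linear terms are exactly critical. From $\|K*\vo_0(\tau)\|_\infty\lesssim\Gamma(\nu\tau)^{-1/2}$ one gets $\|\tilde\vo\|_q\|K*\vo_0\|_\infty\lesssim\Gamma\|\tilde\vo\|_{T,q}(\nu\tau)^{-1+\frac{3}{2q}}$. The Duhamel integral then returns $C\Gamma/\nu$ with no power of $\nu T$, so these terms are absorbed only by smallness of $\Gamma/\nu$; only the quadratic term gains $(\nu T)^{1/2}$.

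\textbf{The defect must be log-free at leading order.} The same count shows that your claimed gain $(\nu t)^{1/2}\kappa|\log\nu t|$ would be fatal. A forcing of size $|\log\nu t|(\nu t)^{-3/2}$ gives $(\nu t)^{\frac12-\frac3{2p}}\|\tilde\vo_F(t)\|_p\gtrsim|\log\nu t|$, so the supremum over $0<t<T$ is infinite. You need to use three facts: $\partial_t\Omega=\nu(\Omega_{rr}+\Omega_r/r)$; the cancellation of $\Omega^\eta_r r_t\TT$ with $\Omega^\eta_r(\XX_t\cdot\ea_r)\TT$; and $(\XX_t\cdot\nabla)\TT=\frac1h(\XX_t\cdot\TT)\TT_s=0$. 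After these, the $(\nu t)^{-3/2}$ part of the defect is log-free: it consists of $-\nu\frac{\ea_r\cdot\TT_s}{h}\Omega_r\eta_R\TT$, $(\vo_0\cdot\nabla)\vv_{L\text{-}O}$ and $(\vv^*\cdot\nabla)\vo_0$.

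The terms carrying $\log(\nu t)$ are $-\Omega^\eta s_t\TT_s-\Omega^\eta\TT_t+\frac1h\Omega^\eta\partial_s\XX_t$. You do not account for the stretching of $\XX_t$. These terms involve $\TT_{ss}$, which is where $C^3$ enters. They are at most $O(|\log\nu t|(\nu t)^{-1})$ and contribute only the $O(\sqrt{\nu T}|\log\nu T|)$ part of \eqref{constante explicita C_F}.

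\textbf{Choice of space.} A single norm $\sup_{3/2\le p\le\infty}\|\cdot\|_{T,p}$ with the same index in and out fails at $p=\infty$: the linear term produces $\int_0^t(t-\tau)^{-1/2}\tau^{-1}\,{\rm d}\tau$. This is why the paper contracts in $X_{T,q}\cap X_{T,2q}$ with $\frac32<q<3$, and recovers $p=\frac32$ and $p\ge6$ afterwards.

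\textbf{Divergence form is required, not a fallback.} $\vo_0$ is exactly divergence free: $\nabla\Omega^\eta=\Omega^\eta_r\ea_r\perp\TT$ and $\nabla\cdot\TT(s(\xx))=\frac1h\TT_s\cdot\TT=0$. No correction is needed, and this identity is essential. Proposition \ref{Lemma_v_o} gives only a pointwise bound on $\vv^*$, not on $\nabla\vv^*$. So the stretching term $(\vo_0\cdot\nabla)\vv^*$, which your residual list does not address, cannot be estimated directly. The paper writes it as $\nabla\cdot(\vo_0\otimes\vv^*)$ using $\nabla\cdot\vo_0=0$, and moves the derivative onto the heat kernel. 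It treats $\nu\Omega^\eta\Delta\TT=\nu\nabla\cdot(\Omega^\eta\nabla\TT)-\nu\nabla\Omega^\eta\cdot\nabla\TT$ the same way.

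Finally, $\vv^*$ is not uniformly $O(\Gamma\kappa)$. It grows like $\Gamma\kappa\log(1+\rho)$ and carries $|\log\|\TT'\|_\infty|$, $|A|$ and a cutoff term $|\log\sqrt{\nu t}|\,e^{-R^2/4\nu t}$. This is harmless only because it always multiplies a Gaussian; cf.\ \eqref{weighted v star}.
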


The proof of this result follows the general scheme established in \cite{FontelosVega}. We write
\[
\bm{\omega}=\bm{\omega}_0+\tilde{\bm{\omega}},
\]
where
\[
\bm{\omega}_0(\xx,t)=\frac{\Gamma}{4\pi}\frac{e^{-\frac{r^2}{4\nu t}}}{\nu t}\eta_R(r)\TT(s,t)
\]
and $\eta_R(r)$ is a cutoff function supported in $\mathcal{T}_R$. Then we study the Biot--Savart law associated with the leading-order vorticity term $\vo_0$. We derive an expansion for the induced velocity field, showing that it consists of three contributions: the local Lamb--Oseen velocity, the binormal velocity of the reference curve, and a remainder term whose dependence on the geometry of the filament is made explicit. Namely,
\[
K*\bm{\omega}_0:=\vv_0=\bm{v}_{L\text{-}O}+\XX_t+\bm v^*,
\]
where
\[
\bm{v}_{L\text{-}O}=\frac{\Gamma}{2\pi}\frac{1}{\sqrt{\nu t}}\frac{1}{\rho}\left(1-e^{-\frac{\rho^2}{4}}\right)\ea_\varphi,
\]
and $\XX_t$ is the binormal flow velocity \eqref{definicion binormal flow}. Special attention is paid to the lowest regularity needed on $\XX$ for the expansion to hold. In the estimate of $\vv^*$, one is naturally led to a scale-invariant quantity measuring the variation of $\TT'=\partial_s\TT$ along the filament. This term can be controlled by a Dini-type condition, although in the present work we impose the stronger assumption $\XX\in C^{2,\alpha}$, which is sufficient for the general estimates. See Lemma \ref{lem:BS-in-s} and Remark \ref{rem:Dini-condition} for details.

Then, defining
\[
\bm{v}=\bm{v}_0+\tilde{\bm{v}},
\]
and inserting the decompositions
\[
\bm{\omega}=\bm{\omega}_0+\tilde{\bm{\omega}},
\qquad
\bm{v}=\bm{v}_0+\tilde{\bm{v}},
\]
into \eqref{vorticity_NS}, we obtain an equation for the perturbation $\tilde{\bm{\omega}}$:
\begin{equation}\label{introduccion cita 1}
\partial_t\tilde{\bm{\omega}}-\nu\Delta\tilde{\bm{\omega}}=NL[\tilde{\bm{\omega}},\tilde{\bm{v}}]+L[\bm{\omega}_0,\tilde{\bm{\omega}}]+\bm F[\bm{\omega}_0,\bm v_0],
\end{equation}
where
\[
NL[\tilde{\bm{\omega}},\tilde{\bm{v}}]=-(\tilde{\bm v}\cdot\nabla)\tilde{\bm\omega}+(\tilde{\bm\omega}\cdot\nabla)\tilde{\bm v},
\]
and
\[
L[\bm{\omega}_0,\tilde{\bm{\omega}}]=-(\tilde{\bm v}\cdot\nabla)\bm\omega_0-(\bm v_0\cdot\nabla)\tilde{\bm\omega}+(\tilde{\bm\omega}\cdot\nabla)\bm v_0+(\bm\omega_0\cdot\nabla)\tilde{\bm v}.
\]
The term $\bm F[\bm{\omega}_0,\bm v_0]$ is the forcing generated by the discrepancy between the ansatz and the exact Navier--Stokes dynamics, and its structure depends explicitly on the geometry of the filament. A key point is that the dominant singular contributions in the forcing cancel out, leaving only lower-order error terms. This cancellation makes it possible to show that the forcing remains perturbatively small in the relevant regime. The proof of the existence of $\tilde{\vo}$ is based on Duhamel's formula and on a fixed point argument in time-weighted Morrey spaces, following the framework of Giga and Miyakawa \cite{GigaMiyakawa}. As explained in Remark \ref{rem: asymptotics fontelos vega}, by looking at the first term of this scheme one recovers the asymptotics exhibited in Theorem \ref{thm: fontelos vega}.

One of the main contributions of this work is to provide a precise description of $\bm F[\vo_0,\vv_0]$, with particular emphasis on its dependence on the geometric quantities of the filament. More precisely, we identify the terms which may depend on torsion and show that the perturbative estimates still hold in a regime where the curvature remains bounded while the torsion is allowed to be large. This sharper description is essential for applying the existence theorem uniformly in the large-torsion regime to be considered later.

Furthermore, this refined understanding has consequences beyond the existence result itself. In Section \ref{sec: moving filament}, we apply these estimates to the specific case of $\XX$ being the Hasimoto soliton. Hasimoto showed in \cite{Hasimoto72} that the binormal flow \eqref{definicion binormal flow}, which after a suitable time rescaling can be written as
\[
\XX_t=\kappa \bm b,
\]
can be transformed into the cubic nonlinear Schrödinger equation
\[
\frac{1}{i}\psi_t=\psi_{ss}+\frac12\big(|\psi|^2+A(t)\big)\psi.
\]
Here $\psi$ is defined as the following wave function
\[
\psi(s,t)=\kappa(s,t)\exp\left(i\int_0^s\tau(s',t)\,{\rm d}s'\right),
\]
where $\tau$ denotes the torsion of the curve. In this setting, the Hasimoto soliton corresponds to a curvature profile of the form
\[
\kappa_\lambda(s,t)=2\lambda\,\operatorname{sech}\big(\lambda(s-2\tau_0 t)\big),
\]
with constant torsion $\tau_0$ and maximum curvature $2\lambda$. Thus the curvature is localized on a length scale $O(\lambda^{-1})$, while its center propagates along the filament with speed $2\tau_0$ in the rescaled time variable. We will choose $\lambda=1$ and
\[
\tau_0\sim \frac{1}{(\Gamma/\nu)\nu T|\log(\nu T)|}\gg1,
\]
as in \eqref{curvature and torsion order}, so that, after undoing the time rescaling associated with the physical binormal flow, the center of the curvature bump undergoes an order-one displacement during an admissible time interval.

Remarkably, for the Hasimoto soliton, the estimates entering the existence result are uniform with respect to the torsion parameter. This relies on two key facts. First, for the Hasimoto soliton, the Biot--Savart estimate for $\vv^*$ is independent of the torsion (see Section~\ref{subsec:Uniformity of the estimates in the torsion parameter} for details, in particular Lemma \ref{cancelacion en el soliton para A}). Second, after introducing the ansatz $\vo_0$, the perturbation equation \eqref{introduccion cita 1} is also independent of the torsion due to a cancellation between terms produced by the time derivative of $\vo_0$ and the vortex stretching term (see \eqref{segunda cancelación temporal}). This cancellation is used in Corollary \ref{Corolario hasimoto} to prove that the constant $\mathcal{C}_F(\XX,\Gamma,\nu T)$ in \eqref{norma morrey introduccion} and \eqref{norma velocidad morrey intro} does not depend on the torsion parameter of the Hasimoto soliton. Consequently, Theorem \ref{Teorema vorticidad intro} remains valid even in this large-torsion regime with estimates that do not depend on the size of the torsion. Moreover, the cancellation of the stretching term exhibited in \eqref{segunda cancelación temporal} is generic and not just for the case of Hasimoto soliton.

Finally, as shown in Section \ref{sec: moving filament}, the Navier--Stokes solution constructed around such a curve contains a localized portion of kinetic energy, associated with the binormal component of the velocity, which remains concentrated inside a moving tubular region, the so-called physical core defined in \eqref{core region label 0} and \eqref{label-core-region}, that undergoes an order-one displacement over the time interval $(0,T)$. See \cite{Ricca1992} for a physical interpretation of the conservation laws of binormal flow. Also, this type of phenomenon appears in the ill- posedness results of the IVP of the cubic NLS proved in \cite{KPV2001} and later extended in \cite{JS2012} to the Schrödinger map equation using Kida solutions.

The presence of soliton-like structures in viscous fluids was already observed in the rotating-tank experiments of Hopfinger, Browand and Gagné \cite{HopfingerBrowandGagne}. In their experiment, turbulence was generated by an oscillating grid in a rotating container. The resulting flow developed long concentrated vortices with helical distortions propagating along their cores. These waves were interpreted as vortex solitary waves and were found to be well described by Hasimoto's vortex-soliton theory. This is consistent with the recent numerical experiments of Sterkers and Krstulovic \cite{SterkersKrstulovic2026}. They performed three-dimensional Navier--Stokes simulations of viscous vortex filaments and observed the propagation of Kelvin waves and Hasimoto-type solitons along concentrated vortex cores. In a high-Reynolds-number regime, $\Gamma/\nu\gg 1$, their simulations show that the observed soliton speed is in good agreement with the prediction obtained from Hasimoto's theory through the localized-induction approximation.
\normalcolor
\subsection{Plan of the paper}

In Section \ref{sec: the local frame}, we introduce the geometric framework used throughout the paper. We define the tubular neighborhood around the filament, construct the associated local coordinate system, and describe the parallel frame formulation of the binormal flow. These coordinates allow us to express the Navier--Stokes equations in a form adapted to the filament geometry.

In Section \ref{sec: biot savart integral}, we study the Biot--Savart law associated with the leading-order vorticity ansatz. We derive an expansion for the induced velocity field, showing that it consists of three main contributions: the local Lamb--Oseen velocity, the velocity $\XX_t$ of the reference curve, and a remainder term whose dependence on the geometry of the filament is made explicit.

In Section \ref{sec: Navier Stokes curvilinear}, we insert the asymptotic ansatz into the Navier--Stokes equations and derive the evolution equation satisfied by the perturbation term. A detailed analysis of the forcing term is carried out, emphasizing the cancellation of the dominant singular terms and identifying precisely how curvature and torsion enter into the error.

In Section \ref{sec: existence result}, we establish the main existence theorem by solving the perturbative equation in Morrey spaces. Using the framework of Giga and Miyakawa, we obtain uniform estimates for the correction term and prove that the full solution remains close to the leading-order filament profile for sufficiently short times.

Finally, in Section \ref{sec: moving filament}, we apply our refined estimates to the explicit Hasimoto soliton solution of the binormal flow. By considering a regime of bounded curvature and large torsion, we construct a filament whose localized curvature bump travels an order-one distance during the time interval under consideration. This provides an explicit example in which the corresponding Navier--Stokes solution exhibits a quantitatively observable displacement of a localized portion of kinetic energy while remaining within the perturbative regime guaranteed by our existence theory.
\section{The local frame}\label{sec: the local frame}

In this Section, we introduce the geometric framework used throughout the paper. We define the tubular neighborhood around the filament, construct the associated local coordinate system and describe the parallel frame formulation of the binormal flow. These coordinates allow us to express the Navier--Stokes equations in a form adapted to the geometry of the filament.
\subsection{The ``filament'' and its tubular neighborhood}\label{subsec: filament and tubular neighborhood}
Let $\XX(s,t)\subset\mathbb{R}^3$ be a smooth curve parametrized by arc-length, and let $\TT(s,t)=\partial_s\XX(s,t)$. We shall sometimes call $\XX$ the filament, by a slight abuse of terminology. More precisely, $\XX$ is the reference curve on which the moving frame is constructed, and the vorticity profile will later be localized around it.
We assume that $\XX$ evolves according to the binormal flow which can be expressed by the following two equivalent formulations:
\begin{subequations}\label{binormal_flow}
\begin{align}
\bm{\chi}_t&=\mathsf{c}\left(\bm{\chi}_s\wedge\bm{\chi}_{ss}\right),\label{binormal_flow_chi}\\
\bm{T}_t&=\mathsf{c}\left(\bm{T}\wedge\bm{T}_{ss}\right),\label{binormal_flow_T}
\end{align}
\end{subequations}
where, for notational convenience, we set $\mathsf{c}=-\frac{\Gamma}{4\pi}\log(\sqrt{\nu t})$.
We assume throughout this section that $\XX$ is sufficiently regular for the geometric quantities below to be well defined. The precise regularity needed in the desingularization of the Biot--Savart integral will be discussed in Sections \ref{sec: biot savart integral} and \ref{sec: moving filament}.
Let us define
\[
d_{\rm arc}(s,s') :=
\begin{cases}
|s-s'|, & \text{if the curve is open},\\[6pt]
\displaystyle \min\{\,|s-s'|,\; L-|s-s'|\,\}, & \text{if the curve is closed of total length } L.
\end{cases}
\]
\begin{defn}[Uniform chord-arc condition]\label{chord_arc_condition}
We say that the arc-length parametrized curve $\XX(s,t)$ satisfies the \emph{uniform chord-arc condition} on $[0,T]$ if there exists $c_0>0$, independent of $t$, such that
\begin{equation}\label{uniform_chord_arc_condition}
|\XX(s,t)-\XX(s',t)|\geq c_0 d_{\rm arc}(s,s')
\end{equation}
for all $s,s'$ and all $t\in[0,T]$.
\end{defn}
For such curves, provided the curvature is bounded, we introduce the following regular tubular neighborhood.
\begin{defn}[Regular tubular neighborhood]\label{regular_tubular_neighborhood}
Let $\XX(s,t)\subset\mathbb{R}^3$ be a smooth curve parametrized by arc-length, and let $R>0$. We define the tubular neighborhood of radius $R$ around $\XX(\cdot,t)$ by
\[
\mathcal{T}_R(t):=\left\{\xx\in\mathbb{R}^3:\exists s \text{ such that } |\xx-\XX(s,t)|<R\right\}.
\]
We say that $\mathcal{T}_R(t)$ is a \emph{regular tubular neighborhood} if, for every $\xx\in\mathcal{T}_R(t)$, there exists a unique point $\XX(s_0,t)$ on the curve such that
\[
|\xx-\XX(s_0,t)|=\min_s|\xx-\XX(s,t)|.
\]
\end{defn}
In this case, the vector $\xx-\XX(s_0,t)$ is orthogonal to the curve at the closest point, that is,
\[
(\xx-\XX(s_0,t))\cdot\TT(s_0,t)=0.
\]
A sufficient condition for $\mathcal{T}_R(t)$ to be a regular tubular neighborhood for every $t\in[0,T]$ is that $R$ satisfies
\begin{equation}\label{definicion radio entorno tubular}
R:=\frac{1}{4}\min\left\{\frac{1}{\kappa_{\max}},\frac{1}{2}\inf_{\substack{t\in[0,T]\\ d_{\rm arc}(s,s')>\pi/\kappa_{\max}}}|\XX(s,t)-\XX(s',t)|\right\},
\end{equation}
where
\[
\kappa_{\max}:=\sup_{t\in[0,T]}\sup_s|\partial_s\TT(s,t)|.
\]
The definition of $R$ combines a local condition based on the curvature and a global non-intersection condition, uniformly in time. The numerical factor $1/4$ is not optimal, it is fixed only to keep a convenient margin in the estimates below.

\subsection{Parallel frame and local coordinates}\label{subsec: Parallel frame and local coordinates}
We now introduce the local frame attached to the reference curve $\XX$ introduced in Subsection \ref{subsec: filament and tubular neighborhood}. This choice is important because, when the Navier--Stokes equations are written in the corresponding local coordinates, additional terms appear due to the motion and rotation of the frame itself.
In the following geometric computations, time is fixed and we omit the dependence on $t$ whenever no confusion can arise.
The classical frame used in the study of space curves is the \emph{Frenet--Serret frame} $(\TT,\bm n,\bm b)$, formed by the tangent $\TT$, normal $\bm{n}$ and binormal $\bm{b}$ vectors, and defined by
\begin{align}\label{frenet_serret_frame}
\begin{bmatrix}
\TT\\
\bm n\\
\bm b
\end{bmatrix}_s
=
\begin{bmatrix}
0 & \kappa & 0\\
-\kappa & 0 & \tau\\
0 & -\tau & 0
\end{bmatrix}
\begin{bmatrix}
\TT\\
\bm n\\
\bm b
\end{bmatrix}.
\end{align}
In this frame, any point $\xx$ in a tubular neighborhood $\mathcal T_R$ of the curve can be written as
\begin{equation}\label{x en frenet serret}
\xx=\XX(s)+r\cos\theta\,\bm n(s)+r\sin\theta\,\bm b(s).
\end{equation}
Although the Frenet--Serret frame is geometrically natural and orthonormal wherever it is defined, it has some drawbacks for our purposes. First, it is not defined at points where $\kappa=0$, since the normal vector is given by $\bm n=\TT_s/|\TT_s|$. Second, the associated tubular coordinates inherit the torsional rotation of the normal plane. In particular, if we compute the tangent vector of the coordinate lines $(s,r,\theta)$ in \eqref{x en frenet serret}, we get
\begin{align*}
\partial_s\xx&=(1-r\kappa\cos\theta)\TT+r\tau(\cos\theta\,\bm{b}-\sin\theta\,\bm{n}),\\
\partial_r\xx&=\cos\theta\,\bm{n}+\sin\theta\,\bm{b},\\
\partial_\theta\xx&=-r\sin\theta\,\bm{n}+r\cos\theta\,\bm{b}.
\end{align*}
As can be seen from these expressions, the coordinate vectors are not mutually orthogonal in general. Consequently, when differential operators, such as those appearing in \eqref{vorticity_NS}, are written in these coordinates, additional terms involving the torsion appear. This is the main reason why the Frenet--Serret frame is not the most convenient choice for the computations below.
For this reason, we use instead the \emph{parallel frame }$\{\TT,\ea_1,\ea_2\}$. This is an orthonormal frame along the reference curve such that $\ea_1$ and $\ea_2$ span the normal plane and do not rotate within that plane as $s$ varies. Equivalently, the $s$-derivatives of $\ea_1$ and $\ea_2$ have only tangential components:
\[
\TT_s=\alpha\ea_1+\beta\ea_2,\qquad (\ea_1)_s=-\alpha\TT,\qquad (\ea_2)_s=-\beta\TT.
\]
Using this frame, we introduce cylindrical coordinates $(r, \varphi, s)$ in the normal plane by writing
\begin{equation}\label{cyl_coor}
\xx=\XX(s)+r\ea_r,\qquad \ea_r=\cos\varphi\,\ea_1+\sin\varphi\,\ea_2.
\end{equation}
Then $\partial_s\xx$ is parallel to $\TT$, while $\partial_r\xx$ and $\partial_\varphi\xx$ lie in the normal plane. Hence the coordinate system $(r,\varphi,s)$ is orthogonal and its scale factors are
\begin{equation}\label{scale_factors}
h^r=1,\qquad h^\varphi=r,\qquad h:=h^s=1-r(\alpha\cos\varphi+\beta\sin\varphi).
\end{equation}
This is the main advantage over the Frenet--Serret frame. In the Frenet--Serret coordinates, changing $s$ also produces a rotation in the angular direction whenever the torsion is nonzero. In the parallel frame, changing $s$ moves the coordinate lines only in the tangential direction. This makes the resulting expressions for the differential operators considerably simpler.
The relation with the Frenet--Serret frame is encoded in the complex curvature
\[
\psi(s,t)=\alpha(s,t)+i\beta(s,t)=\kappa(s,t)\exp\left(i\int_0^s\tau(s',t)\,{\rm d}s'\right).
\]
If $\theta_0(s,t)=\int_0^s\tau(s',t)\,{\rm d}s'$, then
\begin{equation}\label{alpha_beta_trig}
\alpha=\kappa\cos\theta_0,\qquad \beta=\kappa\sin\theta_0.
\end{equation}
With the corresponding choice of parallel frame, the Frenet--Serret angle $\theta$ and the parallel-frame angle $\varphi$ satisfy
\[
\varphi=\theta+\theta_0.
\]
Thus the use of the parallel frame amounts to choosing an angular variable for which the coordinate vector $\partial_s\xx$ has no component in the angular direction.
We now recall how this frame behaves under the binormal flow introduced in \eqref{binormal_flow}. Since $\TT_s=\alpha\ea_1+\beta\ea_2$, it follows from \eqref{binormal_flow_chi} that
\[
\XX_t=\mathsf{c}\,\TT\wedge\TT_s=\mathsf{c}(\alpha\ea_2-\beta\ea_1).
\]
Moreover, by \eqref{binormal_flow_T}, using the defining relations of the parallel frame we obtain
\[
\TT_t=\mathsf{c}(-\beta_s\ea_1+\alpha_s\ea_2).
\]
By orthogonality of the frame, its time evolution is therefore given by
\begin{align}\label{t_derivative_parallel_frame}
\begin{bmatrix}\ea_1\\ \ea_2\\ \TT\end{bmatrix}_t=\begin{bmatrix}0 & \gamma & \mathsf{c}\beta_s\\ -\gamma & 0 & -\mathsf{c}\alpha_s\\ -\mathsf{c}\beta_s & \mathsf{c}\alpha_s & 0\end{bmatrix}\begin{bmatrix}\ea_1\\ \ea_2\\ \TT\end{bmatrix},
\end{align}
where $\gamma=\langle(\ea_1)_t,\ea_2\rangle=-\langle(\ea_2)_t,\ea_1\rangle$.
In this setting, after a suitable time reparametrization to absorb $\mathsf{c}$, the Hasimoto transform converts the binormal flow equation into
\begin{equation}\label{cubica schrodinger}
\frac{1}{i}\psi_t=\psi_{ss}+\frac12\bigl(|\psi|^2+A(t)\bigr)\psi.
\end{equation}
with $A$ being a real function. This equation exhibits simple transformation laws, such as Galilean transformations, and admits classical shape-preserving solutions, including circles, helices and solitons (see \cite{Hasimoto72}).
\subsection{Derivatives of the local coordinates}
Now, we compute the derivatives of the local coordinates introduced above. These identities will be used later to write the differential operators appearing in the Navier--Stokes equations in the coordinates adapted to the reference curve.
Let $\{x_i\}_{i=1}^3$ be the Cartesian coordinate system and let $\{\bar{\ea}_i\}_{i=1}^3$ be the associated basis. As in \eqref{cyl_coor}, every point $\xx\in\mathcal{T}_R$ can be written in local cylindrical coordinates as
\[
\xx=\XX(s,t)+r\ea_r(s,\varphi,t).
\]
Differentiating this identity with respect to the Cartesian variables, and using the scale factors computed in \eqref{scale_factors}, we obtain
\begin{equation}\label{eq:derivadas_coordenadas}
\frac{\partial r}{\partial x_i}=r_i=\bar{\ea}_i\cdot\ea_r,\qquad
\frac{\partial \varphi}{\partial x_i}=\varphi_i=\frac{1}{r}(\bar{\ea}_i\cdot\ea_\varphi),\qquad
\frac{\partial s}{\partial x_i}=s_i=\frac{1}{h}(\bar{\ea}_i\cdot\TT),
\end{equation}
where $h$ denotes the scale factor of the variable $s$ defined in \eqref{scale_factors}. We shall also need the time derivatives of the local coordinates. These derivatives are always understood at fixed Cartesian point $\xx$. In other words, when $\xx$ in \eqref{cyl_coor} is kept fixed, the local coordinates $r=r(\xx,t)$, $\varphi=\varphi(\xx,t)$ and $s=s(\xx,t)$ depend on time. Differentiating \eqref{cyl_coor} with respect to $t$, keeping $\xx$ fixed, we deduce
\[
0=\XX_t+r_t\ea_r+r\varphi_t\ea_\varphi+s_t h\TT+r\partial_t\ea_r.
\]
Here $\partial_t\ea_r$ denotes the explicit time derivative of the frame at fixed $(s,\varphi)$, whereas $r_t$, $\varphi_t$ and $s_t$ are the time derivatives of the local coordinates at fixed $\xx$. Then, taking scalar products with $\ea_r$, $\ea_\varphi$ and $\TT$, respectively, we obtain
\begin{equation}\label{eq:derivadas_tiempo_coordenadas}
r_t=-\XX_t\cdot\ea_r,\qquad
\varphi_t=-\frac{1}{r}(r\partial_t\ea_r+\XX_t)\cdot\ea_\varphi,\qquad
s_t=-\frac{1}{h}(r\partial_t\ea_r+\XX_t)\cdot\TT.
\end{equation}
Since the binormal velocity satisfies $\XX_t\cdot\TT=0$, the last identity reduces to
\[
s_t=-\frac{r}{h}\partial_t\ea_r\cdot\TT.
\]
\section{Biot-Savart integral}\label{sec: biot savart integral}
This section is devoted to the study of the Biot--Savart law associated with the leading-order vorticity ansatz for \eqref{vorticity_NS}. We work in the tubular neighborhood and in the local coordinates constructed in Section \ref{sec: the local frame}. The goal is to desingularize the Biot--Savart integral and to identify the main contributions to the induced velocity field: the local Lamb--Oseen velocity, the binormal-flow term and a remainder whose dependence on the geometry of the reference curve is made explicit.
We begin by rewriting \eqref{definicion biot savart} in the local coordinates introduced in Subsection \ref{subsec: Parallel frame and local coordinates}. The computation is organized in two steps. First, for fixed perpendicular coordinates $(r,\varphi)$, we estimate the integral in the tangential variable $s$, which is carried out independently in Lemma \ref{lem:BS-in-s}. We then integrate the resulting expression in the perpendicular variables.

\subsection{Vorticity ansatz}
As mentioned in the introduction, the leading-order vorticity ansatz for \eqref{vorticity_NS} is given by
\begin{align}\label{lamb_oseen_solution}
\vo_0(\xx,t)=\Omega^\eta(r,t)\TT(s,t),
\end{align}
where $(r,\varphi,s)$ are the local cylindrical coordinates around the reference curve defined in \eqref{cyl_coor}. Here
\[
\Omega(r,t):=\frac{\Gamma}{4\pi\nu t}e^{-\frac{r^2}{4\nu t}},
\qquad
\Omega^\eta(r,t):=\Omega(r,t)\eta_R(r),
\]
and $\eta_R\in C^{\infty}$ is a cutoff function satisfying
\begin{equation}\label{definicion cutoff}
\eta_R(r)=
\begin{cases}
0,& \mbox{if } r\geq R,\\
1,& \mbox{if } r\leq R/2.
\end{cases}
\end{equation}
Throughout this section, time is fixed and plays the role of a parameter, and we therefore omit the dependence of the reference curve and of the frame on $t$ whenever no confusion can arise. The role of the cutoff is to localize the support of $\vo_0$ inside the regular tubular neighborhood of radius $R$ defined in \eqref{definicion radio entorno tubular}. In particular, for every $\xx\in\mathcal{T}_R$ there exists a unique closest point on the reference curve, and hence a unique value of $s$ such that
\[
s=\operatorname*{arg\,min}_{s'\in\RR} |\xx-\XX(s')|.
\]
Sometimes, for convenience, we will use the self-similar variable introduced in \eqref{def: selfsimilar variable}:
\[
\rho=\frac{r}{\sqrt{\nu t}}.
\]
We define the orthogonal displacement vector from the reference curve to the point $\xx$ by
\begin{equation}\label{def: orthogonal vector}
\hx=\xx-\XX(s)=r\ea_r(s)=r\cos(\varphi)\ea_1(s)+r\sin(\varphi)\ea_2(s),
\end{equation}
where $\ea_r$ is the radial direction in local cylindrical coordinates and $\ea_1$ and $\ea_2$ are the orthogonal vectors to $\TT$ that form the parallel frame (see Section \ref{sec: the local frame} for further details).
The velocity field $\vv_0$ induced by $\vo_0$ is obtained by inserting this ansatz into the Biot--Savart law \eqref{definicion biot savart}. Written in the local coordinates of the reference curve, this gives
\begin{equation}\label{eq:BS-local-coordinates}
\begin{aligned}
\vv_0(\xx)
&=\frac{1}{4\pi}\int_{\RR^3}\vo_0(\yy)\wedge\frac{\xx-\yy}{|\xx-\yy|^3}\,{\rm d}\yy\\
&=\frac{1}{4\pi}\int_{\RR^2}\Omega^{\eta}(|\hy|)
\left(\int_{\RR}\TT(s)\wedge\frac{\XX(s_0)+\hx-\XX(s)-\hy(r,\varphi,s)}{|\XX(s_0)+\hx-\XX(s)-\hy(r,\varphi,s)|^3}\,h(r,\varphi,s)\,{\rm d}s\right)\,{\rm d}\hy,
\end{aligned}
\end{equation}
where $\xx=\XX(s_0)+\hx$, $\yy=\XX(s)+\hy(r,\varphi,s)$ and $\hy(r,\varphi,s)=r\ea_r(s,\varphi)$.
\subsection{Integral in the tangential variable}
Before computing $\vv_0$, we first study the inner integral in \eqref{eq:BS-local-coordinates}, obtained by fixing the transverse coordinate $\hy$. This term can be interpreted as the contribution to the velocity induced by the curve parallel to $\XX$ corresponding to that fixed transverse coordinate.
The estimate of this tangential integral depends on a quantity measuring the variation of $\TT'$ along the reference curve. For $\xx=\XX(s_0)+\hx$, with $\hx\perp\TT(s_0)$, we set
\begin{equation}\label{s_limit_condition}
l=\frac{1}{5\|\TT'\|_{\infty}},\qquad A(l,|\hx|)=\int_{s_0-l}^{s_0+l}\int_{s_0}^s\frac{\TT'(\xi)-\TT'(s_0)}{\xi-s_0}g(\xi,s)\,{\rm d}\xi{\rm d}s,
\end{equation}
where
\[
g(\xi,s)=\frac{(\xi-s_0)^2}{\left(|\hx|^2+|s-s_0|^2\right)^{3/2}}.
\]
\begin{lem}\label{lem:BS-in-s}
Let $\XX:\RR\to\RR^3$ be an open curve parametrized by arclength $s$, with $\XX\in C^{2,\alpha}$ for some $\alpha>0$. Assume that $\|\XX''\|_\infty=\|\TT'\|_{\infty}<\infty$. Assume also that the radius $R$ of the tubular neighborhood given by \eqref{definicion radio entorno tubular} is well defined and that there exists $c_0>0$ such that the chord-arc condition \eqref{uniform_chord_arc_condition_intro} holds. Then, for any $\xx\in\mathcal{T}_{R}$, we can write $\xx=\XX(s_0)+\hx$, with $\hx\perp\TT(s_0)$, and the following expression holds:
\[
\int_{\RR}\frac{\TT(s)\wedge(\xx-\XX(s))}{|\xx-\XX(s)|^3}\,{\rm d}s=2\frac{\TT(s_0)\wedge\hx}{|\hx|^2}-\TT(s_0)\wedge\TT'(s_0)\log|\hx|+\bm{\tilde{v}}^*(\xx,\XX).
\]
Furthermore, defining $A(l,|\hx|)$ as in \eqref{s_limit_condition}, the remainder satisfies
\[
|\bm{\tilde{v}}^*(\xx,\XX)|\leq C\left(|A(l,|\hx|)|+\|\TT'\|_\infty\left(1+|\log(\|\TT'\|_\infty)|\right)\right),
\]
for some constant $C>0$ independent of $\xx$ and $\XX$.
\end{lem}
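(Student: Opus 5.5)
We split the integral into a near region $|s-s_0|\le l$ and a far region $|s-s_0|>l$, with $l=1/(5\|\TT'\|_\infty)$ as in \eqref{s_limit_condition}, and write $\sigma=s-s_0$.

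\textbf{Far region.} For $|\sigma|>l$ the chord-arc condition \eqref{uniform_chord_arc_condition_intro} gives $|\XX(s)-\XX(s_0)|\ge c_0|\sigma|$. Since $|\hx|<R\le l$ (up to the harmless factor in the definition of $R$), this yields $|\xx-\XX(s)|\ge c|\sigma|$ once $|\sigma|$ is a large multiple of $l$. In the intermediate range $l<|\sigma|<Ml$ we instead use the regular-neighbourhood property, namely that $s_0$ is the unique closest point, to get $|\xx-\XX(s)|\gtrsim l$. The integrand is bounded by $|\xx-\XX(s)|^{-2}$, so the far contribution is at most $C\int_{|\sigma|>l}\sigma^{-2}\,d\sigma+C/l\lesssim \|\TT'\|_\infty$. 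This piece goes into $\tilde{\bm v}^*$.

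\textbf{Near region, Taylor expansion.} For $|\sigma|\le l$ expand
\[
\XX(s)-\XX(s_0)=\sigma\TT(s_0)+\int_{s_0}^s(s-\xi)\TT'(\xi)\,d\xi,\qquad \TT(s)=\TT(s_0)+\int_{s_0}^s\TT'(\xi)\,d\xi.
\]
Write $\TT'(\xi)=\TT'(s_0)+(\TT'(\xi)-\TT'(s_0))$. The denominator satisfies $|\xx-\XX(s)|^2=|\hx|^2+\sigma^2+O(\|\TT'\|_\infty\sigma^2|\hx|+\|\TT'\|^2_\infty\sigma^4)$, using $\hx\perp\TT(s_0)$. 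On $|\sigma|\le l$ the choice of $l$ makes this comparable to $|\hx|^2+\sigma^2$.

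\textbf{Near region, main terms.} The zeroth-order term is
\[
\TT(s_0)\wedge\hx\int_{-l}^{l}\frac{d\sigma}{(|\hx|^2+\sigma^2)^{3/2}}=\frac{2\,\TT(s_0)\wedge\hx}{|\hx|^2}+O(l^{-2}),
\]
and the $O(l^{-2})$ error is harmless after multiplying by $|\hx|$. The first-order terms with $\TT'(s_0)$ give:
\begin{itemize}
\item from the tangent, $\sigma\,\TT'(s_0)\wedge(\hx-\sigma\TT(s_0))$;
\item from the position, $-\TT(s_0)\wedge\tfrac{\sigma^2}{2}\TT'(s_0)$;
\item from expanding the denominator, a term proportional to $\sigma^2(\TT'(s_0)\cdot\hx)\,\TT(s_0)\wedge\hx$.
\end{itemize}
Odd powers of $\sigma$ integrate to zero. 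The net even part is $\tfrac12\sigma^2\,\TT(s_0)\wedge\TT'(s_0)$ over $(|\hx|^2+\sigma^2)^{3/2}$, and
\[
\int_{-l}^l\frac{\sigma^2\,d\sigma}{(|\hx|^2+\sigma^2)^{3/2}}=2\log(l/|\hx|)+O(1).
\]
This produces $-\TT(s_0)\wedge\TT'(s_0)\log|\hx|$ together with $\log l\,\|\TT'\|_\infty$, which is bounded by $\|\TT'\|_\infty|\log\|\TT'\|_\infty|$. The denominator term is of size $\sigma^2|\hx|^2\|\TT'\|_\infty/(\cdot)^{5/2}$ and integrates to $O(\|\TT'\|_\infty)$.

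\textbf{Near region, remainders.}
\begin{itemize}
\item The terms with $\TT'(\xi)-\TT'(s_0)$ are the only ones not controlled by $\|\TT'\|_\infty$ alone. Writing $\int_{s_0}^s(\TT'(\xi)-\TT'(s_0))\,d\xi$ against the kernel and inserting the factor $(\xi-s_0)/(\xi-s_0)$, they are dominated, up to bounded factors, by exactly $A(l,|\hx|)$; this is why $A$ carries the weight $g$.
\item Terms quadratic in $\TT'$ are of size $\|\TT'\|^2_\infty\sigma^3/(\cdot)^{3/2}$. Over $|\sigma|\le l$ they are $O(\|\TT'\|_\infty^2\, l)=O(\|\TT'\|_\infty)$.
\item The denominator correction is handled by a mean-value bound on $(\cdot)^{-3/2}$, with the same bookkeeping.
\end{itemize}

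\textbf{Main obstacle.} The delicate step is the third near-region step: checking that the first-order expansion of the denominator cancels consistently, so that the coefficient of the logarithm is exactly $-1$ rather than some other constant. It also requires showing that the $C^{2,\alpha}$-dependent remainder matches the form of $A$ precisely, rather than being merely bounded by it in absolute value; the $C^{2,\alpha}$ hypothesis guarantees $A$ is finite. I would first verify the log coefficient on a circle or helix as a sanity check.
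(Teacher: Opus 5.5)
Your outline follows the paper's proof closely: you split at $l$, control the denominator around $|\hx|^2+\sigma^2$ by the mean value theorem, extract $2\,\TT(0)\wedge\hx/|\hx|^2$, obtain the net kernel $\tfrac12\sigma^2\,\TT(0)\wedge\TT'(0)$ that produces the logarithm, and bound the quadratic terms by $\|\TT'\|_\infty^2 l\lesssim\|\TT'\|_\infty$. However, the step that gives the lemma its specific form is left open, and the mechanism you suggest for it does not work.

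Take $s_0=0$ and $f(\xi)=\TT'(\xi)-\TT'(0)$. Because you Taylor-expand $\TT(s)$ and $\XX(s)$ separately, the logarithmically dangerous part of the $f$-remainder appears as two pieces:
\[
\sigma\,\TT(0)\wedge\int_0^\sigma f(\xi)\,d\xi\quad\text{(tangent)},\qquad
-\TT(0)\wedge\int_0^\sigma(\sigma-\xi)\,f(\xi)\,d\xi\quad\text{(position)}.
\]
Their weights are $\sigma$ and $\sigma-\xi$, not the weight $\xi$ in $A$. Inserting $\xi/\xi$ turns these into effective weights $\sigma\xi$ and $(\sigma-\xi)\xi$ instead of $\xi^2$ in $g$. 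Neither is comparable to $\xi^2$ uniformly on $0<\xi<\sigma$, so neither piece is dominated pointwise by the integrand of $A$. Estimating them separately in absolute value gives either $\|\TT'\|_\infty\log(l/|\hx|)$ or the Dini-type quantity that Remark~\ref{rem:Dini-condition} explicitly distinguishes from the signed $A$.

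The missing step is to add the two pieces:
\[
\sigma\int_0^\sigma f\,d\xi-\int_0^\sigma(\sigma-\xi)f\,d\xi=\int_0^\sigma\xi f(\xi)\,d\xi .
\]
So their sum is exactly $\TT(0)\wedge\int_0^\sigma\xi f(\xi)\,d\xi$, and integrating against $(|\hx|^2+\sigma^2)^{-3/2}$ over $[-l,l]$ gives $\TT(0)\wedge A$. The paper builds this in from the start by writing $\XX(0)-\XX(s)=-s\,\TT(s)+\int_0^s\xi\,\TT'(\xi)\,d\xi$. The first term is annihilated by $\TT(s)\wedge$, so $A$ appears directly, and $(\TT(s)-\TT(0))\wedge\int_0^s\xi\,\TT'(\xi)\,d\xi=O(\|\TT'\|_\infty^2|s|^3)$. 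With this identity your second ``obstacle'' disappears.

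Your first ``obstacle'' is not one. As you computed yourself, the first-order denominator correction carries an extra factor $|\hx|^2$ and contributes only $O(\|\TT'\|_\infty)$. The coefficient $-1$ of the logarithm therefore comes entirely from the tangent and position terms: $1-\tfrac12=\tfrac12$, times $\int_{-l}^{l}\sigma^2(|\hx|^2+\sigma^2)^{-3/2}\,d\sigma=2\log(l/|\hx|)+O(1)$.

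There is also a smaller gap in your far-field argument. For $l<|\sigma|<Ml$, the unique-closest-point property only gives $|\xx-\XX(s)|\ge|\hx|$, not $\gtrsim l$. What works there is the curvature bound. For $|\sigma|\le\pi/\kappa_{\max}$, Schur's comparison theorem gives $|\XX(s)-\XX(0)|\ge\frac{2}{\pi}|\sigma|$. Also $|\hx\cdot(\XX(s)-\XX(0))|\le\frac12\kappa_{\max}|\hx|\sigma^2$ with $|\hx|<R\le 1/(4\kappa_{\max})$. Together these give $|\xx-\XX(s)|^2\ge(\frac{4}{\pi^2}-\frac14)\sigma^2$. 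Beyond arc distance $\pi/\kappa_{\max}$, use the separation built into $R$ together with the chord--arc condition. The resulting constant depends on $c_0$ and $R$, a dependence the paper also leaves implicit.
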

\begin{rem}\label{rem:Dini-condition}
	The regularity assumption $\XX\in C^{2,\alpha}$ ensures the pointwise boundedness of $A$. More precisely, since $\TT'=\XX''$ is Hölder continuous, it satisfies a Dini-type condition. Indeed, changing the order of integration and using the explicit form of $g$ we obtain
	\[
	|A(l,|\hx|)|\leq \frac12\int_{s_0-l}^{s_0+l}\left|\frac{\TT'(\xi)-\TT'(s_0)}{\xi-s_0}\right|\,{\rm d}\xi.
	\]
	Therefore, it is enough to require the Dini-type bound
	\begin{equation}\label{eq:Dini-condition}
		\sup_{s_0\in\RR}\int_{s_0-l}^{s_0+l}\left|\frac{\TT'(s)-\TT'(s_0)}{s-s_0}\right|\,{\rm d}s<\infty.
	\end{equation}
	However, the estimate above is obtained after taking absolute values and may therefore lose cancellations present in the signed quantity $A$. In particular, the Dini-type bound may depend on the torsion of the curve. This is why, in Section \ref{sec: moving filament}, we keep track of the more precise form of $A$ given in \eqref{s_limit_condition}.
\end{rem}
\begin{rem}
Let us define the rescaled curve
\[
\XX_\lambda(s)=\lambda^{-1}\XX(\lambda s).
\]
Then
\[
\TT_\lambda(s)=\TT(\lambda s)\qquad\mbox{and}\qquad \TT_\lambda'(s)=\lambda\TT'(\lambda s).
\]
Accordingly, the quantity $A$ scales as
\[
A_\lambda(l,r_0)=\lambda A(\lambda l,\lambda r_0),
\]
i.e. has the natural scaling of the curvature. Therefore, the estimate for the remainder has the correct scaling:
\begin{equation}
|\bm{\tilde v}^*(\xx,\XX_\lambda)|\leq C\lambda(1+|\log{\lambda}|).
\end{equation}
where $C$ is independent of $\lambda$.
\end{rem}
\begin{proof}[Proof of Lemma \ref{lem:BS-in-s}]
Throughout this proof, $C>0$ denotes a universal constant, independent of the geometric properties of the reference curve, which may change from line to line. First, without loss of generality, we set $s_0=0$ and define $\XX_0=\XX(0)$ and $\TT_0=\TT(0)$. Then, we split the integral and isolate the region where the main contribution comes from:
\begin{equation}\label{main_part_BS}
\int_{\RR}\frac{\TT(s)\wedge(\xx-\XX(s))}{|\xx-\XX(s)|^3}\,{\rm d}s=\int_{-l}^l\frac{\TT(s)\wedge(\xx-\XX(s))}{|\xx-\XX(s)|^3}\,{\rm d}s+\int_{\RR\setminus[-l, l]}\frac{\TT(s)\wedge(\xx-\XX(s))}{|\xx-\XX(s)|^3}\,{\rm d}s.
\end{equation}
The main contribution comes from the local term above, so let us rewrite it in a more convenient form. To do this, we notice that
\begin{align*}
|\xx-\XX(s)|^2&=\left|\hx-\int_0^{s}\TT(\xi)\,{\rm d}\xi\right|^2\\
&=\left|\hx-\TT_0s-\int_0^{s}\left(\TT(\xi)-\TT_0\right)\,{\rm d}\xi\right|^2\\
&=|\hx|^2+s^2-2\,\hx\cdot\left(\int_0^{s}\left(\TT(\xi)-\TT_0\right)\,{\rm d}\xi\right)+2s\,\TT_0\cdot\left(\int_0^{s}\left(\TT(\xi)-\TT_0\right)\,{\rm d}\xi\right)\\
&\quad+\left|\int_0^{s}\left(\TT(\xi)-\TT_0\right)\,{\rm d}\xi\right|^2\\
&=|\hx|^2+s^2+F(\hx,\XX(s),s).
\end{align*}
Thus,
\[
|F|\leq|\hx|s^2\|\TT'\|_{\infty}+|s|^3\|\TT'\|_{\infty}+\frac{s^4}{4}\|\TT'\|^2_{\infty}.
\]
Moreover, since $|s|\leq l$ and $\xx\in\mathcal{T}_{R}$, we have
\begin{align}\label{estimacion 1}
\left|\frac{F}{|\hx|^2+s^2}\right|\leq|\hx|\|\TT'\|_{\infty}+|s|\|\TT'\|_{\infty}+\frac{s^2}{4}\|\TT'\|^2_{\infty}\leq\frac{1}{2}.
\end{align}
Then,
\begin{align*}
\frac{1}{|\xx-\XX(s)|^3}&=\frac{1}{\left(|\hx|^2+s^2+F\right)^{3/2}}\\
&=\frac{1}{\left(|\hx|^2+s^2\right)^{3/2}}+\left[\frac{1}{\left(|\hx|^2+s^2+F\right)^{3/2}}-\frac{1}{\left(|\hx|^2+s^2\right)^{3/2}}\right]\\
&=\frac{1}{\left(|\hx|^2+s^2\right)^{3/2}}+R(\hx,\XX(s),s),
\end{align*}
where
\[
R(\hx,\XX(s),s)=\frac{\left(1+\frac{F}{|\hx|^2+s^2}\right)^{-3/2}-1}{\left(|\hx|^2+s^2\right)^{3/2}}.
\]
The mean value theorem together with \eqref{estimacion 1} yields, for every $|\zeta|\leq 1/2$,
\[
|1-(1+\zeta)^{-3/2}|\leq \sup_{|\xi|\leq\frac12}\frac{3}{2}(1+\xi)^{-5/2}|\zeta|\leq 6\sqrt{2}\,|\zeta|.
\]
Therefore,
\begin{equation}\label{equation noname 1}
\begin{aligned}
|R(\hx,s)|&\leq6\sqrt{2}\;\frac{\left|\frac{F}{|\hx|^2+s^2}\right|}{\left(|\hx|^2+s^2\right)^{3/2}}\\
&\leq6\sqrt{2}\;\frac{|\hx|\|\TT'\|_{\infty}+|s|\|\TT'\|_{\infty}+\frac{s^2}{4}\|\TT'\|^2_{\infty}}{\left(|\hx|^2+s^2\right)^{3/2}}.
\end{aligned}
\end{equation}
Now, back to \eqref{main_part_BS}, we write
\begin{align*}
\int_{\RR}\frac{\TT(s)\wedge(\xx-\XX(s))}{|\xx-\XX(s)|^3}\,{\rm d}s
=&\int_{-l}^l\frac{\TT(s)\wedge(\xx-\XX(s))}{\left(|\hx|^2+s^2\right)^{3/2}}\,{\rm d}s+\int_{-l}^l\left[\TT(s)\wedge(\xx-\XX(s))\right]R(\hx,s)\,{\rm d}s\\
&+\int_{\RR\setminus[-l, l]}\frac{\TT(s)\wedge(\xx-\XX(s))}{|\xx-\XX(s)|^3}\,{\rm d}s.
\end{align*}
We split the first integral above as
\begin{align*}
\int_{-l}^l\frac{\TT(s)\wedge(\xx-\XX(s))}{\left(|\hx|^2+s^2\right)^{3/2}}\,{\rm d}s
&=\int_{-l}^l\frac{\TT(s)\wedge\hx}{\left(|\hx|^2+s^2\right)^{3/2}}\,{\rm d}s+\int_{-l}^l\frac{\TT(s)\wedge(\XX_0-\XX(s))}{\left(|\hx|^2+s^2\right)^{3/2}}\,{\rm d}s\\
&=I_1+I_2.
\end{align*}
For $I_1$ we proceed in the following way:
\begin{align*}
I_1&=\TT(0)\wedge\hx\int_{-l}^l\frac{{\rm d}s}{\left(|\hx|^2+s^2\right)^{3/2}}+\int_{-l}^l\frac{\left(\TT(s)-\TT(0)\right)\wedge\hx}{\left(|\hx|^2+s^2\right)^{3/2}}\,{\rm d}s\\
&=\TT_0\wedge\hx\int_{\RR}\frac{{\rm d}s}{\left(|\hx|^2+s^2\right)^{3/2}}-\TT_0\wedge\hx\int_{\RR\setminus[-l,l]}\frac{{\rm d}s}{\left(|\hx|^2+s^2\right)^{3/2}}+\int_{-l}^l\frac{\left(\TT(s)-\TT_0\right)\wedge\hx}{\left(|\hx|^2+s^2\right)^{3/2}}\,{\rm d}s\\
&=2\frac{\TT_0\wedge\hx}{|\hx|^2}+E_1,
\end{align*}
where
\begin{align*}
|E_1|&\leq\frac{2}{|\hx|}\int_{l/|\hx|}^{\infty}\frac{{\rm d}\sigma}{\left(1+\sigma^2\right)^{3/2}}+2\|\TT'\|_{\infty}\int_0^{l/|\hx|}\frac{\sigma{\rm d}\sigma}{\left(1+\sigma^2\right)^{3/2}}\\
&\leq 2\left(\frac{|\hx|}{l^2}+\|\TT'\|_{\infty}\right)\leq C\|\TT'\|_{\infty}.
\end{align*}
To compute $I_2$, first we notice that
\begin{equation}\label{equation noname 2}
\XX_0-\XX(s)=-s\TT(s)+\int_0^s\TT'(\xi)\xi\,{\rm d}\xi=-s\TT(s)+\frac{1}{2}\TT'_0s^2+\int_0^s\left[\TT'(\xi)-\TT'_0\right]\xi\,{\rm d}\xi.
\end{equation}
Thus,
\begin{align*}
I_2=&\;\frac{1}{2}\TT(0)\wedge\TT'(0)\int_{-l}^l\frac{s^2\,{\rm d}s}{\left(|\hx|^2+s^2\right)^{3/2}}+\int_{-l}^l\TT(s)\wedge\left(\int_0^s\left[\TT'(\xi)-\TT'(0)\right]\xi\,{\rm d}\xi\right)\frac{{\rm d}s}{\left(|\hx|^2+s^2\right)^{3/2}}\\
&+\frac{1}{2}\int_{-l}^l\frac{\left[\TT(s)-\TT(0)\right]\wedge\TT'(0)s^2}{\left(|\hx|^2+s^2\right)^{3/2}}\,{\rm d}s\\
=&\;\frac{1}{2}\TT(0)\wedge\TT'(0)\int_{-l}^l\frac{s^2\,{\rm d}s}{\left(|\hx|^2+s^2\right)^{3/2}}+E_2.
\end{align*}
Now, we compute the main term above:
\begin{align*}
&\frac{1}{2}\TT(0)\wedge\TT'(0)\int_{-l}^l\frac{s^2\,{\rm d}s}{\left(|\hx|^2+s^2\right)^{3/2}}\\
&=\TT(0)\wedge\TT'(0)\left[\log\left(\frac{l}{|\hx|}+\sqrt{1+\left(\frac{l}{|\hx|}\right)^2}\right)-\frac{\frac{l}{|\hx|}}{\sqrt{1+\left(\frac{l}{|\hx|}\right)^2}}\right]\\
&=\TT(0)\wedge\TT'(0)\left[\log\left(\frac{l}{|\hx|}\right)+C\right].
\end{align*}
We estimate the error term $E_2$ as follows:
\begin{align*}
|E_2|&\leq \left| \int_{-l}^l\left(\int_0^s\left[\TT'(\xi)-\TT'(0)\right]\xi\,{\rm d}\xi\right)\frac{{\rm d}s}{\left(|\hx|^2+s^2\right)^{3/2}}\right|+C\|\TT'\|^2_\infty\int_0^l\frac{s^3{\rm d}s}{\left(|\hx|^2+s^2\right)^{3/2}}\\
&\leq C' \left(|A(l,|\hx|)|+\|\TT'\|_{\infty}\right).
\end{align*}
Then, we write 
\[
I_2= -\TT(0)\wedge\TT'(0)\log{|\hx|}+E'_2,
\]
with
\[
|E'_2|\leq C\left(|A(l,|\hx|)|+\|\TT'\|_\infty\left[1+\log(\|\TT'\|_\infty)\right]\right).
\]
Summing up everything, we rewrite \eqref{main_part_BS} as follows:
\[
\int_{\RR}\frac{\TT(s)\wedge(\xx-\XX(s))}{|\xx-\XX(s)|^3}\,{\rm d}s=2\,\frac{\TT(0)\wedge\hx}{|\hx|^2}-\TT(0)\wedge\TT'(0)\log{|\hx|}+E_0+E_1+E'_2,
\]
where
\[
E_0=\int_{-l}^l\left[\TT(s)\wedge(\xx-\XX(s))\right]R(\hx,s)\,{\rm d}s+\int_{\RR\setminus[-l, l]}\frac{\TT(s)\wedge(\xx-\XX(s))}{|\xx-\XX(s)|^3}\,{\rm d}s.
\]
For the first term above, it follows from \eqref{equation noname 1} and \eqref{equation noname 2} that
\[
\left|\int_{-l}^l\left[\TT(s)\wedge(\xx-\XX(s))\right]R(\hx,s)\,{\rm d}s\right|\leq C\|\TT'\|_\infty\int_{-l}^l\frac{\left(|\hx|+\|\TT'\|_\infty s^2\right)\left(|\hx|+|s|+s^2\|\TT'\|_{\infty}\right)}{\left(|\hx|^2+s^2\right)^{3/2}}\,{\rm d}s,
\]
which can be estimated by the same arguments used before in this proof. For the second term in $E_0$, we use the uniform chord-arc condition \eqref{uniform_chord_arc_condition} to obtain
\[
|E_0|\leq C\|\TT'\|_{\infty}.
\]
This completes the proof.
\end{proof}
Now, we are in position to prove the main result of this section.
\begin{prop}\label{Lemma_v_o}
Let $\XX:\RR\to\RR^3$ be an open curve parametrized by arclength $s$, with $\XX\in C^{2,\alpha}$ for some $\alpha>0$. Assume that $\|\XX''\|_\infty=\|\TT'\|_{\infty}<\infty$. Assume also that the radius $R$ of the tubular neighborhood given by \eqref{definicion radio entorno tubular} is well defined and that there exists $c_0>0$ such that the chord-arc condition \eqref{uniform_chord_arc_condition_intro} holds. Let $\vo_0$ be as in \eqref{lamb_oseen_solution} and $K$ as in \eqref{definicion biot savart}. Then, for every $\xx\in\mathcal{T}_{R}$, the velocity induced by $\vo_0$ can be expressed in local coordinates $(r,\varphi,s)$ as
\begin{equation}
\begin{aligned}
	K*\vo_0:=\vv_0(r,\varphi,s,t)&=\frac{\Gamma}{2\pi}\frac{1}{r}\left(1-e^{-\frac{r^2}{4\nu t}}\right)\ea_\varphi(\varphi,s,t)-\frac{\Gamma}{4\pi}\log(\sqrt{\nu t})\TT(s,t)\wedge\TT_s(s,t)+\vv^*(r,\varphi,s,t)\\
	&=\vv_{L\text{-}O}+\XX_t+\vv^*,
\end{aligned}
\end{equation}
where, defining $A(l,r)$ as in \eqref{s_limit_condition}, $\vv^*$ satisfies
\begin{align}\label{forma explicita v estrella}
|\vv^*|\leq C \Gamma \|\TT'\|_\infty\left(\log{\left(1+\frac{r}{\sqrt{\nu t}}\right)}+\frac{|A(l,r)|}{\|\TT'\|_\infty}+\left|\log(\|\TT'\|_\infty)\right|+1+\left|\log\bigl(\sqrt{\nu t}\bigr)\right|e^{-\frac{R^2}{4\nu t}}\right)
\end{align}
for some constant $C>0$ independent of $\xx$ and $\XX$.
\end{prop}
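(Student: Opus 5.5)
We start from the representation \eqref{eq:BS-local-coordinates} and apply Lemma \ref{lem:BS-in-s} to the inner $s$-integral, for each fixed transverse vector $\hy$. For fixed $\hy=r'\ea_r(s,\varphi')$ the curve $s\mapsto\XX(s)+\hy(s)$ is a parallel curve of $\XX$, with tangent $h\,\TT(s)$ and arclength element $h\,{\rm d}s$. The inner integral is therefore exactly the Biot--Savart integral of a curve $\XX_{\hy}$. Since $r'<R\le 1/(4\kappa_{\max})$, this curve has curvature comparable to $\|\TT'\|_\infty$, and it inherits the chord--arc and tubular conditions with constants changed by a factor $O(1)$.

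Applying the lemma at the point $\xx$ gives
\[
2\frac{\TT_{\hy}(s_1)\wedge(\xx-\XX_{\hy}(s_1))}{|\xx-\XX_{\hy}(s_1)|^2}-\TT_{\hy}\wedge\TT_{\hy}'\log|\xx-\XX_{\hy}(s_1)|+\tilde{\vv}^*.
\]
Here $s_1$ is the foot of $\xx$ on $\XX_{\hy}$. A more convenient alternative, which I would actually use, is to compare directly with $\XX$ through $s_0$: I would Taylor expand around $s_0$ with $\xx-\XX(s)-\hy(s)=\hx-\hy(s_0)+O(\cdot)$. This reduces the local term to the straight-line kernel evaluated at the transverse difference $\hx-\hy$. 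The differences $\TT_{\hy}-\TT$ and $\TT'_{\hy}-\TT'$ are $O(r'\|\TT'\|^2_\infty)$ and are absorbed into the $\Gamma\|\TT'\|_\infty$ error.

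Next we integrate in $\hy$ against $\Omega^\eta(|\hy|)/(4\pi)$. The first term becomes the 2D Biot--Savart law $\frac{1}{2\pi}\int\Omega^\eta(|\hy|)\frac{\TT\wedge(\hx-\hy)}{|\hx-\hy|^2}{\rm d}\hy$. With $\Omega$ in place of $\Omega^\eta$ this is exactly the Lamb--Oseen velocity $\frac{\Gamma}{2\pi r}(1-e^{-r^2/4\nu t})\ea_\varphi$. The replacement error is bounded by the Gaussian mass outside $R/2$, which gives a term of size $\Gamma e^{-R^2/(16\nu t)}/R\lesssim\Gamma\|\TT'\|_\infty$.

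The logarithmic term gives $-\frac{1}{4\pi}\TT\wedge\TT'\int\Omega^\eta(|\hy|)\log|\hx-\hy|\,{\rm d}\hy$. We rescale $\hy=\sqrt{\nu t}\,\bm z$, so that this equals
\[
\log\sqrt{\nu t}\int\Omega^\eta+\frac{\Gamma}{4\pi}\int e^{-|\bm z|^2/4}\log|\rho\ea_r-\bm z|\,\frac{{\rm d}\bm z}{4\pi}.
\]
The first piece is $\Gamma(1-O(e^{-R^2/(16\nu t)}))\log\sqrt{\nu t}$, and it yields $\XX_t$ plus the term $\Gamma\|\TT'\|_\infty|\log\sqrt{\nu t}|e^{-cR^2/\nu t}$. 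The second piece is bounded by $C\Gamma\log(1+\rho)$, as is standard for the logarithmic potential of a Gaussian. Finally, the lemma's remainder integrates to at most $C\Gamma(|A|+\|\TT'\|_\infty(1+|\log\|\TT'\|_\infty|))$.

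The main obstacle is this last step: $A(l,|\hx-\hy|)$ depends on $\hy$, while \eqref{forma explicita v estrella} contains $A(l,r)$. I would handle it by splitting the integral at $|\hy|<r/2$ and $|\hy|\geq r/2$. In the first region $|\hx-\hy|\sim r$, and $g$ is monotone in $|\hx|$, so the signed quantity can be compared with $A(l,r)$ up to $O(\|\TT'\|_\infty)$. In the second region we use the crude Dini bound of Remark \ref{rem:Dini-condition} together with the smallness of the Gaussian mass, or else simply accept $\sup_{r'}|A(l,r')|$. Keeping the signed cancellation uniform in the torsion is exactly what is delicate there.
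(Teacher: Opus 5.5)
Your route is the paper's. For fixed $\hy$ the paper also passes to the parallel curve $\XX(s)+\hy(s)$, reparametrized by $\varsigma$ with ${\rm d}\varsigma=h\,{\rm d}s$. It applies Lemma \ref{lem:BS-in-s} at base point $s_0$ with transverse vector $\hx-\hy(s_0)$, which is your ``alternative''. It then obtains $\vv_{L\text{-}O}$ from the angular identity derived from \eqref{log_identity}, splits the logarithmic term into $\XX_t$, a cutoff error and the logarithmic potential of the Gaussian, and integrates the lemma's remainder.

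You are more careful than the written proof in three places:
\begin{itemize}
\item The paper drops $\eta_R$ in $I_1$. This is harmless: by the angular identity only $|\hy|<r$ contributes, so for $r\le R/2$ there is no cutoff error at all.
\item It uses $\TT'$ in place of $\TT'_{\hy}=\TT'/h$.
\item It writes $|I_3|\le C\Gamma\bigl(|A(l,|\hx|)|+\dots\bigr)$ as if the lemma had been applied to $\XX$ itself at $\xx$.
\end{itemize}

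On the point you single out, your splitting does not give the pointwise $|A(l,r)|$. The region $|\hy|<r/2$ is controlled not by monotonicity of $g$ but by the torsion-free estimate $|A(l,d_1)-A(l,d_2)|\le C\|\TT'\|_\infty\bigl(1+|\log(d_1/d_2)|\bigr)$. This follows from $\bigl|\int_0^s(\TT'(\xi)-\TT'(0))\xi\,{\rm d}\xi\bigr|\le\|\TT'\|_\infty s^2$.

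The trouble is the other region. For $r\lesssim\sqrt{\nu t}$, the set $|\hy|\ge r/2$ carries almost all of the mass $\Gamma$, not a small part of it, and there $|\hx-\hy|\sim\sqrt{\nu t}$. So the average sees $A$ at scale $\sqrt{\nu t}$. That value can differ from $A(l,r)$ by as much as $\|\TT'\|_\infty\log(\sqrt{\nu t}/r)$, for instance for a planar curve whose curvature has opposite signs on $(r,\sqrt{\nu t})$ and on $(\sqrt{\nu t},l)$.

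No term in \eqref{forma explicita v estrella} absorbs this discrepancy; the $\log(1+r/\sqrt{\nu t})$ term covers the opposite regime. The Dini bound of Remark \ref{rem:Dini-condition} would absorb it, but it gives up torsion uniformity.

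So take your fallback and state the bound with $\sup_{0<d<2R}|A(l,d)|$. That is all the paper uses afterwards: $C^*$ in Lemma \ref{estimacion 3/2 termino de fuerza} contains $\|A(l,r)\|_\infty$, and Lemma \ref{cancelacion en el soliton para A} is uniform in $|\hx|$ and $\tau_0$.

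One further caution. Inside $A$, your parallel-curve corrections of size $r'\|\TT'\|_\infty^2$ come with a factor $\log(l/|\hx-\hy|)$. They are absorbed only after the $\hy$-average, using the weight $r'\sim\sqrt{\nu t}$. Because they are estimated in absolute value, they do not affect torsion uniformity.
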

\begin{proof}
Throughout this proof $C>0$ denotes a universal constant, independent of the geometric properties of the reference curve, which may change from line to line. As in Lemma \eqref{lem:BS-in-s}, we assume that $s_0=0$ and define $\XX_0=\XX(0)$ and $\TT_0=\TT(0)$. Due to the cutoff function $\eta_{R}$, the Biot--Savart integral is localized in the tubular neighborhood $\mathcal{T}_{R}$ and we can write
\[
\xx=\XX_0+\hx\qquad\mbox{and}\qquad \yy=\XX(s)+\hy(r,\varphi,s),
\]
where $\hy(r,\varphi,s)=r\ea_r(\varphi,s)$. A straightforward computation leads to
\begin{align*}
\vv_0(\xx,t)&=\frac{1}{4\pi}\int_{\RR^3}\vo_0(\yy,t)\wedge\frac{\xx-\yy}{|\xx-\yy|^3}\,{\rm d}\yy\\
&=\frac{1}{4\pi}\int_{\RR^2}\Omega^{\eta}(|\hy|,t)\left(\int_{\RR}\TT(s)\wedge\frac{\XX_0+\hx-\XX(s)-\hy(r,\varphi,s)}{|\XX_0+\hx-\XX(s)-\hy(r,\varphi,s)|^3}\,h(r,\varphi,s)\,{\rm d}s\right)\,{\rm d}\hy,
\end{align*}
where $h$ is the scale factor defined in \eqref{scale_factors}. Now, for fixed $(r,\varphi)$, consider the curve
\[
\XX_{\hy}(s)=\XX(s)+\hy(s)
\]
and a function $\varsigma=\varsigma(s)$ such that $\frac{{\rm d}\varsigma}{{\rm d}s}=h(s)$. Then, $\tilde{\XX}(\varsigma)=\XX_{\hy}(s(\varsigma))$ is parametrized by arclength $\varsigma$. Renaming $\tilde{\XX}_0=\XX_0+\hy(0)$ and $\tilde{\bm{x}}_{\perp}=\hx-\hy(0)$, we get
\begin{align}\label{integral cambio de variables}
\vv_0(\xx,t)=\frac{1}{4\pi}\int_{\RR^2}\Omega^{\eta}(|\hy|,t)\left(\int_{\RR}\tilde{\TT}(\varsigma)\wedge\frac{\tilde{\XX}_0+\tilde{\bm{x}}_{\perp}-\tilde{\XX}(\varsigma)}{|\tilde{\XX}_0+\tilde{\bm{x}}_{\perp}-\tilde{\XX}(\varsigma)|^3}\,{\rm d}\varsigma\right)\,{\rm d}\hy.
\end{align}
Hence, by Lemma \ref{lem:BS-in-s}, it follows that
\[
\int_{\RR}\tilde{\TT}(\varsigma)\wedge\frac{\tilde{\bm{x}}_{\perp}-(\tilde{\XX}(z)-\tilde{\XX}_0)}{|\tilde{\bm{x}}_{\perp}-(\tilde{\XX}(\varsigma)-\tilde{\XX}_0)|^3}\,{\rm d}\varsigma
=
2\frac{\TT_0\wedge\tilde{\bm{x}}_{\perp}}{|\tilde{\bm{x}}_{\perp}|^2}
-\log|\tilde{\bm{x}}_{\perp}|\,(\TT_0\wedge\TT'_0)
+\tilde{\vv}^*.
\]
Therefore,
\begin{align*}
\vv_0(\xx,t)
=&\frac{1}{2\pi}\int_{\RR^2}\Omega^{\eta}(|\hy|,t)\frac{\TT_0\wedge(\hx-\hy(0))}{|\hx-\hy(0)|^2}\,{\rm d}\hy\\
&-\frac{\TT_0\wedge\TT'_0}{4\pi}\int_{\RR^2}\Omega^{\eta}(|\hy|,t)\log|\hx-\hy(0)|\,{\rm d}\hy\\
&+\frac{1}{4\pi}\int_{\RR^2}\Omega^{\eta}(|\hy|,t)\tilde{\vv}^*\,{\rm d}\hy\\
:=&\,I_1+I_2+I_3.
\end{align*}
Now, we estimate $I_1$, $I_2$ and $I_3$ independently. In $I_1$ we identify the velocity generated by the Lamb--Oseen vortex, an infinite straight vortex filament of strength $\Gamma$. In fact, after the change of variables $\rho=r/\sqrt{\nu t}$, and writing $\rho_0=|\hx|/\sqrt{\nu t}$, we obtain
\begin{align}\label{integral_lamb_oseen}
I_1=\frac{\Gamma}{8\pi^2}\frac{1}{\sqrt{\nu t}}\TT_0\wedge\frac{\hx}{|\hx|}\int_0^{\infty}\eta_R(\sqrt{\nu t}\rho)e^{-\frac{\rho^2}{4}}\left(\int_0^{2\pi}\frac{\rho_0-\rho e^{i\varphi}}{|\rho_0-\rho e^{i\varphi}|^2}\,{\rm d}\varphi\right)\rho\,{\rm d}\rho.
\end{align}
Notice that the sine component vanishes by symmetry, so the angular integral reduces to
\[
\int_0^{2\pi}\frac{\rho_0-\rho e^{i\varphi}}{|\rho_0-\rho e^{i\varphi}|^2}\,{\rm d}\varphi
=
\int_0^{2\pi}
\frac{\rho_0-\rho\cos\varphi}{\rho_0^2+\rho^2-2\rho_0\rho\cos\varphi}
\,{\rm d}\varphi.
\]
Using the following identity of logarithmic kernels
\begin{align}\label{log_identity}
\int_0^{2\pi}\log\bigl|\rho_0-\rho e^{i\varphi}\bigr|\,{\rm d}\varphi
=
2\pi \log\bigl(\max\{\rho_0,\rho\}\bigr),
\end{align}
differentiation with respect to $\rho_0$ yields
\[
\int_0^{2\pi}
\frac{\rho_0-\rho\cos\varphi}{\rho_0^2+\rho^2-2\rho_0\rho\cos\varphi}
\,{\rm d}\varphi
=
\begin{cases}
\dfrac{2\pi}{\rho_0}, & \rho<\rho_0,\\[1ex]
0, & \rho>\rho_0.
\end{cases}
\]
Then, back to \eqref{integral_lamb_oseen}, we see that $I_1$ corresponds to Lamb-Oseen velocity term. In fact,
\[
I_1=\frac{\Gamma}{4\pi}\frac{1}{\sqrt{\nu t}}\frac{\ea_{\varphi_0}}{\rho_0}
\int_0^{\rho_0}e^{-\frac{\rho^2}{4}}\rho\,{\rm d}\rho
=
\frac{\Gamma}{2\pi\sqrt{\nu t}}\,
\frac{\left(1-e^{-\rho_0^2/4}\right)}{\rho_0}\,
\ea_{\varphi_0}.
\]
We continue with $I_2$, in which we identify the binormal-flow velocity term $\XX_t$ defined in \eqref{definicion binormal flow}:
\begin{align*}
I_2=&-\frac{\Gamma}{4\pi}\frac{\TT_0\wedge\TT'_0}{2}\log{\sqrt{\nu t}}\int_0^{\infty}\eta_R(\sqrt{\nu t}\rho)e^{-\frac{\rho^2}{4}}\rho\,{\rm d}\rho\\
&-\frac{\Gamma}{4\pi}\frac{\TT_0\wedge\TT'_0}{4\pi}\int_0^{\infty}\eta_R(\sqrt{\nu t}\rho)e^{-\frac{\rho^2}{4}}\left(\int_0^{2\pi}\log|\rho_0-\rho e^{i\varphi}|\,{\rm d}\varphi\right)\rho\,{\rm d}\rho\\
=&\,\XX_t(0)+E_2.
\end{align*}
where
\begin{align*}
E_2=&\frac{\Gamma}{4\pi}\frac{\TT_0\wedge\TT'_0}{2}\log{\sqrt{\nu t}}\int_0^{\infty}\left(1-\eta_R(\sqrt{\nu t}\rho)\right)e^{-\frac{\rho^2}{4}}\rho\,{\rm d}\rho\\
&-\frac{\Gamma}{4\pi}\frac{\TT_0\wedge\TT'_0}{4\pi}\int_0^{\infty}\eta_R(\sqrt{\nu t}\rho)e^{-\frac{\rho^2}{4}}\left(\int_0^{2\pi}\log|\rho_0-\rho e^{i\varphi}|\,{\rm d}\varphi\right)\rho\,{\rm d}\rho\\
=&\,E_{2,1}+E_{2,2}.
\end{align*}
It is straightforward that
\[
|E_{2,1}|\leq C\Gamma\|\TT'\|_{\infty}\left|\log\bigl(\sqrt{\nu t}\bigr)\right|e^{-\frac{R^2}{4\nu t}},
\]
Using \eqref{log_identity} and splitting the radial integral at $\rho=\rho_0$, we obtain
\[
|E_{2,2}|\leq C\Gamma\|\TT'\|_{\infty}\left(|\log\rho_0|\int_0^{\rho_0}e^{-\frac{\rho^2}{4}}\rho\,{\rm d}\rho+\int_{\rho_0}^{\infty}e^{-\frac{\rho^2}{4}}\rho|\log\rho|\,{\rm d}\rho\right).
\]
Then, if $0<\rho_0\leq1$, then
\[
|\log\rho_0|\int_0^{\rho_0}e^{-\frac{\rho^2}{4}}\rho\,{\rm d}\rho\leq \frac{\rho_0^2}{2}|\log\rho_0|<\infty,
\]
and
\[
\int_{\rho_0}^{\infty}e^{-\frac{\rho^2}{4}}\rho|\log\rho|\,{\rm d}\rho\leq \int_0^{\infty}e^{-\frac{\rho^2}{4}}\rho|\log\rho|\,{\rm d}\rho< \infty.
\]
On the other hand, if $\rho_0\geq1$, then
\[
|\log\rho_0|\int_0^{\rho_0}e^{-\frac{\rho^2}{4}}\rho\,{\rm d}\rho\leq 2\log\rho_0,
\]
while
\[
\int_{\rho_0}^{\infty}e^{-\frac{\rho^2}{4}}\rho\log\rho\,{\rm d}\rho\leq \int_1^{\infty}e^{-\frac{\rho^2}{4}}\rho\log\rho\,{\rm d}\rho< \infty.
\]
Therefore, in both cases,
\[
|E_{2,2}|\leq C\Gamma\|\TT'\|_{\infty}\bigl(1+\log(1+\rho_0)\bigr).
\]
Finally, the term $I_3$ is estimated directly by using the estimate of $\tilde{\vv}^*$ in Lemma \ref{lem:BS-in-s}, giving
\[
|I_3|\leq C\Gamma\left(|A(l,|\hx|)|+\|\TT'\|_\infty\left[1+\left|\log(\|\TT'\|_\infty)\right|\right]\right).
\]
Collecting the expressions for $I_1$, $I_2$ and $I_3$, and absorbing the error terms into $\vv^*$, we obtain
\[
\vv_0=\vv_{L\text{-}O}+\XX_t+\vv^*,
\]
with $\vv^*$ satisfying \eqref{forma explicita v estrella}. This completes the proof.
\end{proof}

\section{Navier--Stokes in local coordinates}\label{sec: Navier Stokes curvilinear}
We now write the vorticity formulation of the Navier--Stokes equations in the local coordinates constructed in Section \ref{sec: the local frame}. The purpose of this section is to compute the equation satisfied by the perturbation around the leading Lamb--Oseen profile localized around the reference curve. More precisely, we look for a solution of \eqref{vorticity_NS} in the form
\begin{equation}\label{full_solution_NS}
\vo=\vo_0+\tilde{\vo},
\end{equation}
where $\vo_0$ is the leading order profile defined in \eqref{lamb_oseen_solution}. 

As shown in Proposition \ref{Lemma_v_o}, the associated approximate velocity field is given by
\begin{equation}\label{velocity_decomposition_NS}
\begin{aligned}
\vv_0(r,\varphi,s,t)=&\frac{\Gamma}{2\pi}\frac{1}{r}\left(1-e^{-\frac{r^2}{4\nu t}}\right)\ea_\varphi(\varphi,s,t)-\frac{\Gamma}{4\pi}\log(\sqrt{\nu t})\TT(s,t)\wedge\TT_s(s,t)+\vv^*(r,\varphi,s,t)\\
&=\vv_{L\text{-}O}+\XX_t+\vv^*.
\end{aligned}
\end{equation}
Here $\vv_{L\text{-}O}$ denotes the Lamb--Oseen velocity, $\XX_t$ is the velocity of the reference curve under the binormal flow and $\vv^*$ is the nonlocal remainder obtained from the Biot--Savart law. Then, the velocity induced by \eqref{full_solution_NS} is decomposed as
\begin{equation}\label{full_velocity_NS}
\vv=K*\vo=\vv_0+\tilde{\vv},\quad\mbox{with }\quad \tilde{\vv}=K*\tilde{\vo}.
\end{equation}

Following the strategy of \cite{FontelosVega}, we now substitute the decomposition \eqref{full_solution_NS}--\eqref{full_velocity_NS} into \eqref{vorticity_NS}. This gives an equation for the perturbation $\tilde{\vo}$, whose inhomogeneous term is the defect produced by the approximate pair $(\vo_0,\vv_0)$. More precisely, the perturbation satisfies
\begin{equation}\label{NS_omega_tilde}
\partial_t\tilde{\vo}-\nu\Delta\tilde{\vo}=NL[\tilde{\vo},\tilde{\vv}]+L[\vo_0,\tilde{\vo}]+\bm{F}[\vo_0,\vv_0].
\end{equation}
Here the nonlinear and linear terms are defined by
\begin{align}\label{nonlinear_term_NS}
NL[\tilde{\vo},\tilde{\vv}]&=-(\tilde{\vv}\cdot\nabla)\tilde{\vo}+(\tilde{\vo}\cdot\nabla)\tilde{\vv},\\
L[\vo_0,\tilde{\vo}]&=-(\tilde{\vv}\cdot\nabla)\vo_0-(\vv_0\cdot\nabla)\tilde{\vo}+(\tilde{\vo}\cdot\nabla)\vv_0+(\vo_0\cdot\nabla)\tilde{\vv}\label{linear_term_NS},
\end{align}
and the forcing term is the defect of the approximate solution, namely
\begin{equation}\label{forcing_defect}
\bm{F}[\vo_0,\vv_0]=-\partial_t\vo_0+\nu\Delta\vo_0-(\vv_0\cdot\nabla)\vo_0+(\vo_0\cdot\nabla)\vv_0.
\end{equation}
\begin{lem}\label{ecuacion termino de fuerza 3/2}
Let $\vo_0$ and $\vv_0$ be given by \eqref{lamb_oseen_solution} and \eqref{velocity_decomposition_NS}, respectively. Then $\bm{F}[\vo_0,\vv_0]$ in \eqref{forcing_defect} is of order $(\nu t)^{-3/2}$, for $(\nu t)\ll1$, and is given explicitly by
\begin{equation}\label{termino de fuerza 1}
\begin{aligned}
	\bm{F}[\vo_0,\vv_0]=&-\Omega^\eta s_t\TT_s-\Omega^\eta\TT_t-\vv^*\cdot\nabla\vo_0+\vo_0\cdot\nabla\vv_{L\text{-}O}+\frac{1}{h}\Omega^\eta\partial_s\XX_t+\vo_0\cdot\nabla\vv^*\\
	&+\nu\left(\frac{1}{r}\Omega\eta_{R,r}+2\Omega_r\eta_{R,r}+\Omega\eta_{R,rr}-\frac{\ea_r\cdot\TT_s}{h}\Omega\eta_{R,r}\right)\TT\\
	&+\nu\left(-\frac{\ea_r\cdot\TT_s}{h}\Omega_r\eta_R\TT+\Omega^\eta\Delta\TT\right).
\end{aligned}
\end{equation}
\end{lem}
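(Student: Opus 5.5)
The plan is to compute each of the four pieces of \eqref{forcing_defect} in the local coordinates $(r,\varphi,s)$ of Section \ref{sec: the local frame}, using $\vo_0=\Omega^\eta(r,t)\TT(s,t)$. After that we insert the velocity decomposition \eqref{velocity_decomposition_NS} and identify the terms that cancel exactly.

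\emph{Time derivative.} At fixed $\xx$ we have
\[
\partial_t\vo_0=(\partial_t\Omega^\eta+\Omega^\eta_r r_t)\TT+\Omega^\eta(\TT_t+s_t\TT_s),
\]
where the coordinate derivatives come from \eqref{eq:derivadas_tiempo_coordenadas}. The term $\Omega^\eta_r r_t\TT=-\Omega^\eta_r(\XX_t\cdot\ea_r)\TT$ is exactly the transport of the profile by the rigid motion of the curve. The term $\partial_t\Omega\,\eta_R\TT$ is the time part of the 2D heat equation for the Lamb--Oseen profile. The contributions $-\Omega^\eta\TT_t-\Omega^\eta s_t\TT_s$ are kept as they are.

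\emph{Laplacian.} We use the scale factors \eqref{scale_factors}. The Laplacian of a scalar $f(r)$ in these orthogonal coordinates is
\[
f_{rr}+\tfrac1r f_r+\tfrac{1}{h}(\partial_r h)f_r,\qquad \partial_r h=-\ea_r\cdot\TT_s .
\]
For the vector, $\Delta(\Omega^\eta\TT)=(\Delta\Omega^\eta)\TT+2\nabla\Omega^\eta\cdot\nabla\TT+\Omega^\eta\Delta\TT$, and the middle term vanishes because $\nabla\Omega^\eta\parallel\ea_r$ while $\TT$ depends only on $s$. Expanding $\Delta(\Omega\eta_R)$ by the product rule then gives four groups: the 2D heat operator acting on $\Omega$, which cancels $\partial_t\Omega$; the cutoff terms $\frac1r\Omega\eta_{R,r}+2\Omega_r\eta_{R,r}+\Omega\eta_{R,rr}-\frac{\ea_r\cdot\TT_s}{h}\Omega\eta_{R,r}$; the curvature term $-\frac{\ea_r\cdot\TT_s}{h}\Omega_r\eta_R$; and $\Omega^\eta\Delta\TT$.

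\emph{Nonlinear terms.} We split $\vv_0=\vv_{L\text{-}O}+\XX_t+\vv^*$. Since $\vv_{L\text{-}O}\parallel\ea_\varphi$ and $\vo_0$ depends only on $(r,s)$, the transport $(\vv_{L\text{-}O}\cdot\nabla)\vo_0$ vanishes. We then expect $(\XX_t\cdot\nabla)\vo_0$ to cancel exactly the term $-\Omega^\eta_r r_t\TT$ from the time derivative. Here one must check that the $s$-component of $\XX_t$ is zero, which holds because $\XX_t\perp\TT$, so no extra $\TT_s$ term appears beyond what is already recorded. For the stretching term, $(\vo_0\cdot\nabla)\XX_t=\frac{\Omega^\eta}{h}\partial_s\XX_t$, since $\XX_t$ depends only on $s$ and $\TT\cdot\nabla=h^{-1}\partial_s$. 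The remaining terms $-\vv^*\cdot\nabla\vo_0$, $\vo_0\cdot\nabla\vv_{L\text{-}O}$ and $\vo_0\cdot\nabla\vv^*$ are kept. Collecting everything yields \eqref{termino de fuerza 1}.

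\emph{Order $(\nu t)^{-3/2}$.} Each surviving term is $\Omega=O((\nu t)^{-1})$ times either a derivative in the normal direction, which costs $(\nu t)^{-1/2}$ (for instance $\vv^*\cdot\nabla\vo_0\sim|\vv^*|\Omega_r$, or $\nu\Omega_r\TT_s$), or a bounded geometric factor. In $\vo_0\cdot\nabla\vv_{L\text{-}O}$ the derivative falls on the $s$-dependence of $\ea_\varphi$ through the frame, giving $\Omega\,|\vv_{L\text{-}O}|\,|\TT_s|\lesssim(\nu t)^{-3/2}$. The cutoff terms are exponentially small, since $\eta_R'$ is supported where $\rho\gtrsim R/\sqrt{\nu t}$.

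\emph{Main obstacle.} The bookkeeping of the time derivatives at fixed $\xx$ is the delicate step. We must verify that the transport term $\XX_t\cdot\nabla\vo_0$ exactly matches $\Omega^\eta_r r_t$ with the factor $h$ handled correctly, and that no hidden term of order $(\nu t)^{-2}$ survives. I would first carry out this check using $\nabla r=\ea_r$, as in \eqref{eq:derivadas_coordenadas}.
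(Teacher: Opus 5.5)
Your proposal is correct and follows essentially the same route as the paper: compute $\partial_t\vo_0$ at fixed $\xx$, expand $\Delta\vo_0$ (your scale-factor formula with $\partial_r h=-\ea_r\cdot\TT_s$ is equivalent to the paper's identity $\sum_i r_{ii}=\frac1r-\frac{\ea_r\cdot\TT_s}{h}$), cancel the heat-equation terms of the Lamb--Oseen profile and the term $\Omega^\eta_r r_t\TT$ against $\Omega^\eta_r(\XX_t\cdot\ea_r)\TT$ from the transport, and keep the remaining terms. Your observation that $(\XX_t\cdot\nabla)\TT=\frac{1}{h}(\XX_t\cdot\TT)\TT_s=0$ holds exactly is slightly sharper than the paper, which only calls this term lower order, and it explains why no such term appears in \eqref{termino de fuerza 1}.
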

\begin{proof}
We proceed to compute all terms in
\begin{equation}\label{NS_omega_0}
\bm{F}[\vo_0,\vv_0]=-\partial_t\vo_0+\nu\Delta\vo_0-(\vv_0\cdot\nabla)\vo_0+(\vo_0\cdot\nabla)\vv_0.
\end{equation}
We use the notation introduced in Section \ref{sec: biot savart integral}, so that
\[
\vo_0(r,s,t)=\Omega^\eta(r,t)\TT(s,t),
\qquad
\Omega^\eta(r,t)=\Omega(r,t)\eta_R(r).
\]
In the computations below, when no confusion can arise, we omit the explicit dependence of $\Omega$ and $\Omega^\eta$ on $t$. Here and below, when applying spatial derivatives to $\TT$, we regard $\TT(s,t)$ as the vector field $\TT(s(\xx,t),t)$ in the tubular neighborhood, thus $\Delta\TT$ denotes the spatial Laplacian of this extension. Then, we have that
\[
\frac{\partial \vo_0}{\partial x_i}=\Omega^\eta_i\TT+\Omega^\eta\TT_i,\qquad\mbox{where}\qquad\Omega^\eta_i=\Omega^\eta_r r_i.
\]
We obtain the following expression for the Laplacian of $\vo_0$:
\[
\Delta\vo_0=\sum_{i=1}^3\left(\Omega^\eta_{rr}r_i^2\TT+\Omega_r^\eta r_{ii} \TT+2\Omega^\eta_i\TT_i+\Omega^\eta\TT_{ii}\right).
\]
Using \eqref{eq:derivadas_coordenadas}, one has
\[
\sum_{i=1}^3r_i^2=1,\qquad\sum_{i=1}^3\Omega^\eta_i\TT_i=\Omega^\eta_r\TT_s\sum_{i=1}^3r_is_i=0\qquad\mbox{and}\qquad\sum_{i=1}^3r_{ii}=\frac{1}{r}-\frac{\ea_r\cdot\TT_s}{h},
\]
and therefore
\begin{align*}
\Delta\vo_0&=\Omega^\eta_{rr}\TT+\frac{1}{r}\Omega^\eta_{r}\TT-\frac{\ea_r\cdot\TT_s}{h}\Omega^\eta_{r}\TT+\Omega^\eta\Delta\TT\\
&=\left(\Omega_{rr}+\frac{1}{r}\Omega_r\right)\eta_R\TT+\left(\frac{1}{r}\Omega\eta_{R,r}+2\Omega_r\eta_{R,r}+\Omega\eta_{R,rr}-\frac{\ea_r\cdot\TT_s}{h}\Omega\eta_{R,r}\right)\TT\\
&\quad-\frac{\ea_r\cdot\TT_s}{h}\Omega_r\eta_R\TT+\Omega^\eta\Delta\TT.
\end{align*}
Similar arguments show that $\vv_{L\text{-}O}\cdot\nabla\vo_0=0$, so the transport term takes the form
\[
\vv_0\cdot\nabla\vo_0=\XX_t\cdot\nabla\vo_0+\vv^*\cdot\nabla\vo_0.
\]
Furthermore, the stretching term yields
\[
\vo_0\cdot\nabla\vv_0=\vo_0\cdot\nabla\vv_{L\text{-}O}+\frac{1}{h}\Omega^\eta\partial_s\XX_t+\vo_0\cdot\nabla\vv^*.
\]
Since
\[
\vo_0(\xx,t)=\Omega^\eta(r(\xx,t),t)\TT(s(\xx,t),t),
\]
when differentiating with respect to time, the Cartesian point $\xx$ is kept fixed. Thus the local coordinates $r=r(\xx,t)$, $\varphi=\varphi(\xx,t)$ and $s=s(\xx,t)$ also depend on time. In the following formula, $\partial_t\Omega^\eta$ denotes the explicit time derivative at fixed $r$, while $\TT_t$ denotes the time derivative of $\TT(s,t)$ at fixed $s$. Therefore,
\[
\partial_t\vo_0
=
\partial_t\Omega^\eta\TT
+\Omega^\eta_r r_t\TT
+\Omega^\eta s_t\TT_s
+\Omega^\eta\TT_t.
\]
A straightforward computation shows that the leading terms of order $(\nu t)^{-2}$ without derivatives of the cutoff are
\[
\nu\left(\Omega_{rr}+\frac{1}{r}\Omega_r\right)\eta_R\TT
\qquad\mbox{and}\qquad
\partial_t\Omega\,\eta_R\TT.
\]
Since the Lamb--Oseen profile satisfies the two-dimensional heat equation,
\[
\partial_t\Omega=\nu\left(\Omega_{rr}+\frac{1}{r}\Omega_r\right),
\]
these two terms cancel out in \eqref{NS_omega_0}. The remaining terms involving derivatives of the cutoff are precisely those displayed in \eqref{termino de fuerza 1}.
Then, the terms of order $\log(\nu t)(\nu t)^{-3/2}$, namely
\[
-\XX_t\cdot\nabla\vo_0
\qquad\mbox{and}\qquad
-\Omega^\eta_r r_t\TT,
\]
also cancel out. Indeed, since $\XX_t\cdot\TT=0$, we have
\[
\XX_t\cdot\nabla\vo_0
=
\Omega^\eta_r(\XX_t\cdot\ea_r)\TT
+\Omega^\eta(\XX_t\cdot\nabla)\TT.
\]
The first term cancels with $-\Omega^\eta_r r_t\TT$, because $r_t=-\XX_t\cdot\ea_r$ by \eqref{eq:derivadas_tiempo_coordenadas}. The remaining term $\Omega^\eta(\XX_t\cdot\nabla)\TT$ is part of the lower-order contribution. This cancellation reflects the fact that the tubular coordinates are transported by the binormal motion of the reference curve. Therefore, after these two cancellations, the remaining terms are of order at most $(\nu t)^{-3/2}$.

Finally, collecting the remaining terms in \eqref{NS_omega_0}, we obtain
\begin{align*}
\bm{F}[\vo_0,\vv_0]=&-\Omega^\eta s_t\TT_s-\Omega^\eta\TT_t-\vv^*\cdot\nabla\vo_0+\vo_0\cdot\nabla\vv_{L\text{-}O}+\frac{1}{h}\Omega^\eta\partial_s\XX_t+\vo_0\cdot\nabla\vv^*\\
&+\nu\left(\frac{1}{r}\Omega\eta_{R,r}+2\Omega_r\eta_{R,r}+\Omega\eta_{R,rr}-\frac{\ea_r\cdot\TT_s}{h}\Omega\eta_{R,r}\right)\TT\\
&+\nu\left(-\frac{\ea_r\cdot\TT_s}{h}\Omega_r\eta_R\TT+\Omega^\eta\Delta\TT\right),
\end{align*}
which is precisely \eqref{termino de fuerza 1}.
\end{proof}
\begin{rem}\label{rem: asymptotics fontelos vega}
As stated in Lemma \ref{ecuacion termino de fuerza 3/2}, the forcing term $\bm{F}[\vo_0,\vv_0]$ is of order $O((\nu t)^{-3/2})$. Its leading order contribution is given by
\[
\bm{f}=-\nu\frac{\ea_r\cdot\TT_s}{h}\Omega_r\eta_R\TT-\vv^*\cdot\nabla\vo_0+\vo_0\cdot\nabla\vv_{L\text{-}O}.
\]
The first term is of order $O(\nu\Gamma)$, whereas the remaining terms are of order $O(\Gamma^2)$. Since in Section \ref{sec: existence result} we shall work in the regime $\Gamma/\nu$ small with $\nu$ of order $O(1)$, the dominant contribution is
\[
\bm{f}_1=-\nu\frac{\ea_r\cdot\TT_s}{h}\Omega_r\eta_R\TT=-\nu\frac{\Gamma}{8\pi}\kappa\rho\cos(\varphi-\theta)\frac{e^{-\rho^2/4}}{(\nu t)^{3/2}}\eta_R\TT.
\]
This term does not obstruct the construction of a solution $\tilde{\vo}$ in Section \ref{sec: existence result}. Nevertheless, at first order, the perturbation is driven by the Duhamel contribution associated with $\bm{f}_1$. For this reason, the authors of \cite{FontelosVega} introduce a correction to the leading vorticity profile. This correction is obtained from the model problem in which, at each point of the reference curve, the arclength variable $s$ is treated as a parameter and the reference curve is replaced by its tangent line. In this way one solves a two-dimensional heat equation in the normal variables,
\begin{equation}\label{eq:omega_correction}
\partial_t\vo_1-\nu\Delta_{\hx}\vo_1=\bm{f}_1,
\end{equation}
where $\Delta_{\hx}$ denotes the Laplacian in the plane perpendicular to the reference curve. The corresponding first-order correction is given by
\[
\vo_1(\xx,t)=\frac{\Gamma}{8\pi}\kappa\rho\frac{e^{-\frac{\rho^2}{4}}}{\sqrt{\nu t}}\cos(\theta(s))\TT(s),
\]
see Theorem \ref{thm: fontelos vega}. We will not need this correction in the existence argument below, but it is useful to note that the dominant term $\bm f_1$ is formally the first-order contribution captured by the correction introduced in \cite{FontelosVega}.
\end{rem}
\begin{rem}\label{rem: cancelacion torsion}
It will be essential later in Section \ref{sec: moving filament} to understand how the forcing term in \eqref{NS_omega_tilde} depends on the geometric quantities of the reference curve, and in particular on the torsion. There are three possible sources of such a dependence. First, the terms involving the Biot--Savart remainder $\vv^*$ inherit their geometric dependence from the estimate in Lemma \ref{lem:BS-in-s}, and in particular from the quantity $A(l,|\hx|)$. This dependence will be analysed for the Hasimoto soliton in Section \ref{sec: moving filament}. Second, the terms involving second $s$-derivatives of $\TT$ that are written in divergence form; these terms can be handled in the Duhamel formula by transferring the derivatives onto the heat kernel. Finally, there are terms involving second $s$-derivatives of $\TT$ which are not in divergence form. These are the terms that we isolate here:
\[
\Omega^\eta s_t\TT_s+\Omega^\eta\TT_t-\frac{1}{h}\Omega^\eta\partial_s\XX_t.
\]
Using the properties of the local frame established in Section \ref{sec: the local frame}, the time derivative of the arclength coordinate in \eqref{eq:derivadas_tiempo_coordenadas} can be written as
\[
s_t=-\frac{r}{h}\mathsf{c}\,\ea_{\varphi}\cdot\TT_{ss}.
\]
Moreover, since $\TT_t=\partial_s\XX_t$, we obtain
\[
\Omega^\eta s_t\TT_s+\Omega^\eta\TT_t-\frac{1}{h}\Omega^\eta\partial_s\XX_t
=
-\frac{r}{h}\Omega^\eta\mathsf{c}\left[(\ea_{\varphi}\cdot\TT_{ss})\TT_s+(\ea_r\cdot\TT_s)\TT\wedge\TT_{ss}\right].
\]
After rewriting this expression in the parallel frame, the dependence on the second derivative $\TT_{ss}$ reduces to a derivative of the curvature squared. More precisely,
\begin{equation}\label{segunda cancelación temporal}
\Omega^\eta s_t\TT_s+\Omega^\eta\TT_t-\frac{1}{h}\Omega^\eta\partial_s\XX_t
=
-\frac{r}{h}\Omega^\eta\frac{\Gamma}{4\pi}\log(\sqrt{\nu t})\frac{1}{2}\partial_s\left(|\TT_s|^2\right)\ea_{\varphi}.
\end{equation}
In particular, the non-divergence terms above do not contain any explicit dependence on the torsion. Although this observation is not needed in the existence argument below, it will be relevant in Section \ref{sec: moving filament}. With this simplification, the forcing term may be written as
\begin{equation*}
\begin{aligned}
	\bm{F}[\vo_0,\vv_0]=&\frac{r}{h}\frac{\Gamma}{4\pi}\log(\sqrt{\nu t})\Omega^\eta\frac{1}{2}\partial_s\left(|\TT_s|^2\right)\ea_{\varphi}-\vv^*\cdot\nabla\vo_0+\vo_0\cdot\nabla\vv_{L\text{-}O}+\vo_0\cdot\nabla\vv^*\\
	&+\nu\left(\frac{1}{r}\Omega\eta_{R,r}+2\Omega_r\eta_{R,r}+\Omega\eta_{R,rr}-\frac{\ea_r\cdot\TT_s}{h}\Omega\eta_{R,r}\right)\TT\\
	&+\nu\left(-\frac{\ea_r\cdot\TT_s}{h}\Omega_r\eta_R\TT+\Omega^\eta\Delta\TT\right).
\end{aligned}
\end{equation*}
\end{rem}
\section{Existence result}\label{sec: existence result}
In this section we prove that the perturbation equation obtained in \eqref{NS_omega_tilde} admits a solution in a suitable Morrey-type space. The argument adapts the strategy of \cite{FontelosVega} to the functional setting of \cite{GigaMiyakawa}. More precisely, we consider
\begin{equation}\label{NS_tilde_final}
\begin{cases}
\partial_t\tilde{\vo}-\nu\Delta\tilde{\vo}=NL[\tilde{\vo},\tilde{\vv}]+L[\vo_0,\tilde{\vo}]+\bm{F}[\vo_0,\vv_0],&\text{in }\RR^3\times(0,T),\\
\tilde{\vv}=K*\tilde{\vo},\qquad \tilde{\vo}|_{t=0}=0,&\text{in }\RR^3,
\end{cases}
\end{equation}
where $NL[\tilde{\vo},\tilde{\vv}]$, $L[\vo_0,\tilde{\vo}]$, and $\bm{F}[\vo_0,\vv_0]$ are defined in \eqref{nonlinear_term_NS}, \eqref{linear_term_NS}, and \eqref{termino de fuerza 1}, respectively. By Duhamel's formula, any solution of \eqref{NS_tilde_final} satisfies
\[
\tilde{\vo}(t)=\tilde{\vo}_F(t)+\int_0^t e^{\nu(t-\tau)\Delta}\left(NL[\tilde{\vo},\tilde{\vv}]+L[\vo_0,\tilde{\vo}]\right)(\tau)\,{\rm d}\tau,
\]
where the contribution generated by the forcing term is
\begin{equation}\label{definicion omega_F}
\tilde{\vo}_F(t):=\int_0^t e^{\nu(t-\tau)\Delta}\bm{F}[\vo_0,\vv_0](\tau)\,{\rm d}\tau.
\end{equation}
We shall first estimate $\tilde{\vo}_F$ and then prove that the remaining Duhamel terms define a contraction for sufficiently small $\Gamma/\nu$ and $\nu T$.

\subsection{Functional setting}\label{subsec: functional setting}
We now recall the functional framework that will be used in the fixed point argument. Following \cite{GigaMiyakawa}, we work with Morrey-type norms for Radon measures. These norms are well suited to vorticities concentrated around lower-dimensional sets, while for positive times they can also be applied to the regularized densities produced by the heat flow.

\begin{defn}\label{Morrey_giga}
Let $\mu$ be a Radon measure on $\RR^3$ and let $1\leq p\leq \infty$. We define
\[
\|\mu\|_p:=\sup_{x\in\RR^3,\ r>0}r^{-\frac{3}{p'}}|\mu|(B(x,r)),
\]
with $p'=\frac{p}{p-1}$. The Morrey-$p$ space is
\[
\mathcal{M}^p:=\left\{\mu\in\mathcal{M}_{\mathrm{Radon}}(\RR^3):\|\mu\|_p<\infty\right\}.
\]
Here $|\mu|(B(x,r))$ denotes the total variation of $\mu$ in the ball of radius $r$ centered at $x$. If $d\mu=f\,dx$, we write $\|f\|_p$ for the Morrey norm of the measure $\mu$.
\end{defn}

We next collect the estimates in Morrey spaces given in \cite{GigaMiyakawa} that will be used throughout the fixed point argument. The first ones concern the heat semigroup and provide the time singularities that appear in the Duhamel formula. The second ones concern the Biot--Savart operator and will be used to estimate the velocity induced by a vorticity in Morrey spaces.

\begin{prop}[\cite{GigaMiyakawa}]\label{prop:Morrey_basic}
Let $\mathcal{M}^p$ be defined as in Definition \ref{Morrey_giga}. Then:
\begin{enumerate}[label=(\roman*)]
\item $\mathcal{M}^p$ is a Banach space with norm $\|\cdot\|_p$.
\item $\mathcal{M}^1$ is the space of finite measures, and $\|\mu\|_1=|\mu|(\RR^3)$.
\item $\mathcal{M}^{\infty}$ coincides with $L^\infty$ after identifying an $L^\infty$ function with the measure $f\,dx$, and the norms are equivalent.
\end{enumerate}
\end{prop}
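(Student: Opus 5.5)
The plan is to verify the three items directly from Definition \ref{Morrey_giga}, treating each item separately. The only inputs are standard facts about Radon measures: monotone convergence, Radon--Nikodym, Lebesgue differentiation, and completeness of the space of finite measures in total variation.

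For (ii) I take $p=1$, so that $p'=\infty$ and $r^{-3/p'}=1$. The norm is then $\sup_{x,r}|\mu|(B(x,r))$. Since $|\mu|(B(x,r))\le|\mu|(\RR^3)$, and $|\mu|(B(0,r))\uparrow|\mu|(\RR^3)$ by monotone convergence, this supremum equals $|\mu|(\RR^3)$. Hence $\mathcal M^1$ is exactly the space of finite measures.

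For (iii) I take $p=\infty$, so that $p'=1$ and the condition reads $|\mu|(B(x,r))\le \|\mu\|_\infty r^3$ for all balls. This bound forces $|\mu|$ to vanish on Lebesgue-null sets. Indeed, such a set can be covered by countably many balls of arbitrarily small total volume, and the bound applies to each ball. Radon--Nikodym then gives $d\mu=f\,dx$ for some locally integrable $f$. By Lebesgue's differentiation theorem, at almost every $x$,
\[
|f(x)|=\lim_{r\to0}\frac{|\mu|(B(x,r))}{|B(x,r)|}\le\frac{3}{4\pi}\|\mu\|_\infty .
\]
Conversely, if $f\in L^\infty$, then $|\mu|(B(x,r))\le\frac{4\pi}{3}r^3\|f\|_{L^\infty}$. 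Together these give the identification $\mathcal M^\infty=L^\infty$ with equivalent norms.

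For (i), the norm axioms are immediate from the corresponding properties of the total variation. These are $|\mu+\nu|\le|\mu|+|\nu|$ and $|c\mu|=|c||\mu|$, and the supremum preserves them. Definiteness holds because $|\mu|(B(0,r))=0$ for all $r$ forces $\mu=0$.

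The main step is completeness. Let $(\mu_n)$ be Cauchy in $\mathcal M^p$. For each $N$, restriction to $B(0,N)$ satisfies
\[
|\mu_n-\mu_m|(B(0,N))\le N^{3/p'}\|\mu_n-\mu_m\|_p .
\]
So the restrictions are Cauchy in total variation, which is a complete norm on finite measures. They therefore converge to a finite measure $\mu^{(N)}$ on $B(0,N)$. These limits are compatible under restriction, so they glue to a Radon measure $\mu$ on $\RR^3$.

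To show norm convergence, fix $\varepsilon>0$ and $n_0$ with $\|\mu_n-\mu_m\|_p<\varepsilon$ for $n,m\ge n_0$. For any ball $B=B(x,r)$, choose $N$ with $B\subset B(0,N)$. Then
\[
|\mu_n-\mu|(B)=\lim_{m\to\infty}|\mu_n-\mu_m|(B)\le\varepsilon r^{3/p'},
\]
because total-variation convergence on $B(0,N)$ implies $|\mu_n-\mu_m|(B)\to|\mu_n-\mu|(B)$. Taking the supremum over balls gives $\|\mu_n-\mu\|_p\le\varepsilon$ for all $n\ge n_0$. This also shows $\mu\in\mathcal M^p$, and hence that $\mathcal M^p$ is a Banach space.

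The only delicate point I expect is this gluing of the local limits together with the passage of the uniform Morrey bound to the limit. Both are handled by the ball-by-ball total-variation convergence just described.
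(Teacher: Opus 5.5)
Your proof is correct on all three items. The paper gives no proof of this proposition; it is quoted from Giga--Miyakawa. There is therefore no internal argument to compare with, and what you have written is the standard direct verification, serving as a self-contained replacement for the citation.

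Two small remarks. First, the constants you obtain in (iii), namely $\|f\|_{L^\infty}\le\frac{3}{4\pi}\|\mu\|_\infty$ and $\|\mu\|_\infty\le\frac{4\pi}{3}\|f\|_{L^\infty}$, are exactly what justifies the paper's later reading of the $\mathcal{M}^\infty$ bound on $\tilde{\bm{\omega}}$ as a pointwise bound. That reading is used both in the introduction and in the definition of the core region for the Hasimoto soliton.

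Second, in the completeness step, for $p>1$ the glued limit $\mu$ is in general only locally finite. Its total variation on $\RR^3$ may be infinite, as for a Lamb--Oseen tube around an infinite open filament, which is precisely the situation this paper is designed for. The limit should therefore be understood as a measure defined on bounded Borel sets, or equivalently as an element of the dual of $C_c(\RR^3)$. This causes no difficulty, because every step you take involves only bounded balls. That includes the comparison
$\bigl||\mu_n-\mu_m|(B)-|\mu_n-\mu|(B)\bigr|\le|\mu_m-\mu|(B(0,N))$
for $B\subset B(0,N)$, which is what lets the uniform Morrey bound pass to the limit.
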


For $T>0$, we define the time-weighted Morrey norm
\begin{equation}\label{def: temporal Morrey norm}
\|f\|_{T,p}:=\sup_{0<t<T}(\nu t)^{\frac12-\frac{3}{2p}}\|f(t)\|_p.
\end{equation}
We denote by $X_{T,p}$ the space of all functions $f:(0,T)\to\mathcal{M}^p$ such that $\|f\|_{T,p}<\infty$. Since $\mathcal{M}^p$ is a Banach space, $X_{T,p}$, endowed with the norm \eqref{def: temporal Morrey norm}, is a Banach space. This is the time-weighted Morrey space used in the fixed point argument, following the framework of \cite{GigaMiyakawa}.

The heat semigroup satisfies the following estimates in Morrey spaces.

\begin{prop}[\cite{GigaMiyakawa}]\label{proposition Giga Miya convolucion calor}
Let $1\leq q\leq p\leq \infty$. Then there exists a constant $C>0$, independent of $\mu$ and $t>0$, such that
\begin{enumerate}[label=(\roman*)]
\item $\|e^{\nu t\Delta}\mu\|_q\leq \|\mu\|_q$.
\item $\|\nabla e^{\nu t\Delta}\mu\|_p\leq C(\nu t)^{-\frac12-\frac12(\frac3q-\frac3p)}\|\mu\|_q$.
\end{enumerate}
\end{prop}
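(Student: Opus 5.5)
The statement to prove is Proposition \ref{proposition Giga Miya convolucion calor}: (i) contractivity of the heat semigroup in $\mathcal{M}^q$, and (ii) the smoothing estimate for its gradient from $\mathcal{M}^q$ to $\mathcal{M}^p$. Both follow from explicit kernel bounds combined with the scaling structure of the Morrey norms. I will write $G_t(\xx)=(4\pi\nu t)^{-3/2}e^{-|\xx|^2/(4\nu t)}$, so that $e^{\nu t\Delta}\mu=G_t*\mu$ and $\nabla e^{\nu t\Delta}\mu=(\nabla G_t)*\mu$.

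For (i), the Gaussian is nonnegative with unit mass, so $|G_t*\mu|\le G_t*|\mu|$ as measures. For any ball we then have
\[
|G_t*\mu|(B(\xx,r))\le\int_{\RR^3}G_t(\zz)\,|\mu|(B(\xx-\zz,r))\,{\rm d}\zz\le r^{3/q'}\|\mu\|_q .
\]
Multiplying by $r^{-3/q'}$ and taking the supremum over $\xx$ and $r$ gives the claim with constant $1$. This step is routine.

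For (ii), I first treat the endpoint $p=\infty$, using Proposition \ref{prop:Morrey_basic}(iii) to identify $\mathcal{M}^\infty$ with $L^\infty$. Fix $\xx$ and decompose $\RR^3$ into the ball $B(\xx,\sqrt{\nu t})$ and dyadic shells $2^{k-1}\sqrt{\nu t}\le|\xx-\yy|<2^k\sqrt{\nu t}$. On the $k$-th shell, $|\nabla G_t|\le C(\nu t)^{-2}e^{-c4^k}$. The $|\mu|$-mass of the shell is at most $(2^k\sqrt{\nu t})^{3/q'}\|\mu\|_q$. Summing over $k$, the Gaussian factor beats the polynomial growth in $2^k$, and we obtain
\[
|\nabla G_t*\mu(\xx)|\le C(\nu t)^{-2+\frac{3}{2q'}}\|\mu\|_q=C(\nu t)^{-\frac12-\frac{3}{2q}}\|\mu\|_q ,
\]
which is the case $p=\infty$.

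Next I treat $p=q$ by the same argument as in (i). Since $\|\nabla G_t\|_{L^1}=C(\nu t)^{-1/2}$, the averaging bound gives $\|\nabla G_t*\mu\|_q\le C(\nu t)^{-1/2}\|\mu\|_q$.

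For $q<p<\infty$ I interpolate between these two endpoints; this is the step where the Morrey structure matters. Set $f=\nabla G_t*\mu$ and take a ball $B$ of radius $r$. I will use two bounds: $\int_B|f|\le r^{3/q'}\|f\|_q$ and $\int_B|f|\le Cr^3\|f\|_\infty$. Taking the geometric mean with weights $\theta=q/p$ and $1-\theta$, the exponent of $r$ is $3\theta/q'+3(1-\theta)=3-3/p=3/p'$, so
\[
r^{-3/p'}\int_B|f|\le\|f\|_q^{\theta}\,\|f\|_\infty^{1-\theta}.
\]
Inserting the two endpoint bounds, the time exponent is
\[
-\tfrac12\theta-\left(\tfrac12+\tfrac{3}{2q}\right)(1-\theta)=-\tfrac12-\tfrac{3}{2q}+\tfrac{3}{2p},
\]
which is exactly the exponent in (ii).

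The main obstacle I anticipate is the convolution-averaging step used in (i) and in the case $p=q$ when $\mu$ is a genuine measure rather than a function. One must justify the bound $|\mu*K|(B)\le\int|K(\zz)|\,|\mu|(B-\zz)\,{\rm d}\zz$, which I would do via Fubini on the product measure $|K(\zz)|\,{\rm d}\zz\otimes|\mu|$. The dyadic summation in the $p=\infty$ case is routine.
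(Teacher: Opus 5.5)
Your proof is correct. The paper gives no argument of its own here; the proposition is simply quoted from Giga--Miyakawa. What you wrote is the standard self-contained proof of these Morrey-space heat estimates, in three pieces:
\begin{enumerate}
\item the averaging bound $\|\phi*\mu\|_q\le\|\phi\|_{L^1}\|\mu\|_q$ for $\phi\in L^1$, which you justify by Tonelli (routine for Radon measures, not a real obstacle);
\item a dyadic-shell pointwise bound for the endpoint $p=\infty$;
\item the ball-by-ball inequality $\|f\|_p\le\|f\|_q^{q/p}\|f\|_\infty^{1-q/p}$.
\end{enumerate}
The third piece is an inequality for the single function $f=\nabla G_t*\mu$, not an operator interpolation, so no interpolation theory for Morrey spaces is needed.

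Writing the proof out also gives something the citation, as stated, does not. Run your shell argument with $G_t$ in place of $\nabla G_t$: this gives $|G_t*\mu(\xx)|\le C(\nu t)^{-\frac{3}{2q}}\|\mu\|_q$. The same interpolation against (i) then yields $\|e^{\nu t\Delta}\mu\|_p\le C(\nu t)^{-\frac12(\frac3q-\frac3p)}\|\mu\|_q$ for all $q\le p$. The paper uses exactly this smoothing estimate in the proof of Lemma \ref{estimacion 3/2 termino de fuerza} for the $\bm{F}_1$ contribution, with $q=\frac32$ (resp.\ $q=2$) and $p>q$. Item (i) as stated only covers $p=q$, so your method supplies the missing case at no extra cost.
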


We shall also use the following bounds for the Biot--Savart operator.

\begin{prop}[\cite{GigaMiyakawa}]\label{proposition Giga Miya convolucion biot savart}
Let $K$ be the Biot--Savart operator.
\begin{enumerate}[label=(\roman*)]
\item If $\frac1p=\frac13+\frac1q$ and $\mu\in\mathcal{M}^p$, then $K*\mu\in\mathcal{M}^q\cap L^1_{\mathrm{loc}}$, and
\[
\|K*\mu\|_q\leq C\|\mu\|_p.
\]
\item If $q<3<p$ and $\mu\in\mathcal{M}^p\cap\mathcal{M}^q$, then $K*\mu\in L^\infty$, and
\[
\|K*\mu\|_\infty\leq C\|\mu\|_p^{\frac{2/3-1/q'}{1/q-1/p}}\|\mu\|_q^{\frac{1/p'-2/3}{1/q-1/p}}.
\]
\end{enumerate}
In both estimates, $C$ is independent of $\mu$.
\end{prop}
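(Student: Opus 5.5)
The plan is to dominate the Biot--Savart kernel by the Riesz potential of order one and then run dyadic estimates adapted to the Morrey scaling. Since $|K(\xx)|\le C|\xx|^{-2}$, we have pointwise
\[
|K*\mu(\xx)|\le C\int_{\RR^3}\frac{{\rm d}|\mu|(\yy)}{|\xx-\yy|^{2}}=:C\,I_1|\mu|(\xx),
\]
so both statements reduce to bounds for $I_1|\mu|$ with $|\mu|$ a positive measure. The basic tool is the dyadic decomposition of $\{|\xx-\yy|\sim 2^k r\}$ combined with the defining inequality $|\mu|(B(\xx,\varrho))\le \varrho^{3/p'}\|\mu\|_p$.

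\textbf{Part (i).} Since $\frac1p=\frac13+\frac1q$ with $q<\infty$, we have $p<3$, i.e.\ $3/p'<2$. Fix a ball $B(\xx_0,r)$. We must show $\int_{B(\xx_0,r)}I_1|\mu|\,{\rm d}\xx\le C r^{3/q'}\|\mu\|_p$, and note that $3/q'=1+3/p'$. Split $|\mu|=\mu_1+\mu_2$, with $\mu_1$ the restriction of $|\mu|$ to $B(\xx_0,2r)$.
\begin{itemize}
\item For $\mu_1$, Fubini together with $\int_{B(\xx_0,r)}|\xx-\yy|^{-2}{\rm d}\xx\le Cr$ gives a bound $Cr\,|\mu|(B(\xx_0,2r))\le Cr^{1+3/p'}\|\mu\|_p$.
\item For $\mu_2$, we have $|\xx-\yy|\ge\frac12|\xx_0-\yy|$ for $\xx\in B(\xx_0,r)$. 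Summing over dyadic shells gives
\[
I_1\mu_2(\xx)\le C\sum_{k\ge1}(2^kr)^{-2}(2^{k+1}r)^{3/p'}\|\mu\|_p\le Cr^{3/p'-2}\|\mu\|_p ,
\]
where convergence uses exactly $3/p'<2$. Integrating over the ball multiplies this by $r^3$, which again yields $r^{1+3/p'}$.
\end{itemize}
The same computation shows that $I_1|\mu|$ is finite almost everywhere and locally integrable. Hence $K*\mu$ is well defined as an $L^1_{\rm loc}$ function, which gives the claim.

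\textbf{Part (ii).} Fix $\xx$ and a radius $\delta>0$, and split the integral into $|\xx-\yy|<\delta$ and $|\xx-\yy|\ge\delta$.
\begin{itemize}
\item The inner part, summed over shells $2^{-k-1}\delta\le|\xx-\yy|<2^{-k}\delta$, is bounded by $C\sum_k(2^{-k}\delta)^{3/p'-2}\|\mu\|_p$. This converges because $p>3$ gives $3/p'>2$, so the inner part is at most $C\delta^{3/p'-2}\|\mu\|_p$.
\item The outer part, summed over shells $2^k\delta$, converges because $q<3$, and is at most $C\delta^{3/q'-2}\|\mu\|_q$.
\end{itemize}
Choosing $\delta$ to balance the two terms, namely $\delta^{3/p'-3/q'}=\|\mu\|_q/\|\mu\|_p$, gives $\|\mu\|_p^{1-\theta}\|\mu\|_q^{\theta}$ with
\[
\theta=\frac{3/p'-2}{3/p'-3/q'}=\frac{1/p'-2/3}{1/q-1/p}.
\]
Then $1-\theta=\frac{2/3-1/q'}{1/q-1/p}$, which are exactly the stated exponents.

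I expect no serious obstacle here. The only delicate points are the bookkeeping of exponents in the endpoint condition of (i), where the convergence of the far-field sum is exactly the condition $p<3$, and justifying that the convolution of $K$ with a measure makes sense. The latter is handled by working with $|\mu|$ and the positive potential $I_1|\mu|$ throughout and invoking Fubini--Tonelli.
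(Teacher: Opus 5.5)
Your proof is correct. The paper gives no proof of this proposition and simply imports it from Giga--Miyakawa, and your argument is the standard proof of these estimates: pointwise domination $|K*\mu|\le C\,I_1|\mu|$, a near/far splitting with dyadic shells against the growth bound $|\mu|(B(\xx,\varrho))\le\varrho^{3/p'}\|\mu\|_p$, and optimization of $\delta$ in (ii).

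The restriction $q<\infty$ (i.e.\ $p<3$) that you impose in (i) is a genuine hidden hypothesis of the statement, not just a convenience. At $p=3$ the far-field sum diverges logarithmically; for instance, $K*\mu$ is not even defined when $\mu$ is a fixed normal vector times surface measure on a half-plane.
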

\subsection{Estimates for the forcing term}
We now estimate the contribution generated by the forcing term in \eqref{definicion omega_F}. The estimates rely on the fact that the forcing is localized in a tubular neighborhood of the reference curve and has Gaussian decay in the normal direction. We first record a simple Morrey estimate for such Gaussian profiles.

Throughout this section, we use the uniform chord--arc condition \eqref{uniform_chord_arc_condition}. In particular, this condition prevents long portions of the reference curve from concentrating inside a small ball.

\begin{lem}\label{lema 5.5}
Let $3/2\leq q\leq\infty$, and let $\eta_R$ be as defined in \eqref{definicion cutoff}. Then, for any $c>0$, there exists $C>0$ such that
\begin{equation}\label{tubular_gaussian_morrey_estimate}
\left\|e^{-c\frac{r^2}{\nu t}}\eta_R(r)\right\|_q\leq C(\nu t)^{\frac{3}{2q}},
\end{equation}
for every $t>0$.
\end{lem}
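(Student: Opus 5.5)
The plan is to estimate the Morrey quantity directly from Definition \ref{Morrey_giga}. We bound $f(\xx):=e^{-c\frac{r^2}{\nu t}}\eta_R(r)$ on an arbitrary ball $B(\xx_0,\varrho)$ in two ways, one for small balls and one for large balls, and then optimize in $\varrho$. Throughout, $f$ is nonnegative, bounded by $1$, and supported in the regular tubular neighborhood $\mathcal{T}_R$, where the local coordinates $(r,\varphi,s)$ are available. We need to show that
\[
\varrho^{-3/q'}\int_{B(\xx_0,\varrho)}f\,{\rm d}\xx\leq C(\nu t)^{\frac{3}{2q}}
\]
uniformly in $\xx_0$ and $\varrho>0$.

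\emph{Trivial bound.} Since $f\leq1$, we have $\int_{B(\xx_0,\varrho)}f\leq C\varrho^3$. This gives the Morrey quantity $\leq C\varrho^{3-3/q'}=C\varrho^{3/q}$, which is $\leq C(\nu t)^{\frac{3}{2q}}$ whenever $\varrho\leq\sqrt{\nu t}$. For $q=\infty$ this also gives the claim directly, since $\|f\|_\infty\leq1$.

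\emph{Tube bound.} Here we use the chord--arc condition \eqref{uniform_chord_arc_condition}. Any point $\yy\in B(\xx_0,\varrho)\cap\mathcal{T}_R$ has its closest curve point $\XX(s)$ in $B(\xx_0,\varrho+R)$. By the chord--arc condition, the set of such $s$ has diameter at most $2(\varrho+R)/c_0$. We then write the integral in local coordinates, with Jacobian $h\,r$ and $0<h\leq 1+R\kappa_{\max}\leq 2$ by \eqref{definicion radio entorno tubular}. This gives
\[
\int_{B(\xx_0,\varrho)}f\leq \int_{|s|\lesssim(\varrho+R)/c_0}\int_0^{2\pi}\int_0^\infty e^{-c\frac{r^2}{\nu t}}\,2r\,{\rm d}r\,{\rm d}\varphi\,{\rm d}s\leq C\frac{\varrho+R}{c_0}\,\nu t .
\]

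\emph{Combining the bounds.} For $\varrho\geq\sqrt{\nu t}$ we use the tube bound. If $\varrho\geq R$, then $\varrho+R\leq2\varrho$ and the Morrey quantity is $\leq C\nu t\,\varrho^{1-3/q'}=C\nu t\,\varrho^{3/q-2}$. Since $q\geq3/2$, the exponent $3/q-2$ is $\leq0$, so this is maximized at the smallest admissible $\varrho$, namely $\varrho=\sqrt{\nu t}$. That yields $C(\nu t)^{1+\frac{3}{2q}-1}=C(\nu t)^{\frac{3}{2q}}$. If instead $\sqrt{\nu t}\leq\varrho<R$, the same computation applies with $(\varrho+R)$ replaced by $2R$, giving $\leq CR\,\nu t\,\varrho^{-3/q'}$. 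Either this is dominated as before, or, when $R$ is comparable to or smaller than $\varrho$ up to constants, we simply absorb $R$ into $C$. The constant then depends on $R$ and $c_0$, which the statement allows, since $C$ depends only on the fixed geometry and on $c$.

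I expect the only delicate step to be the tube bound, that is, controlling the length of curve inside a large ball. This is exactly where the uniform chord--arc condition is needed; without it an open curve could accumulate infinite length in a bounded region. The endpoint $q=3/2$ is the borderline case where the exponent $3/q-2$ vanishes, and the estimate still closes there.
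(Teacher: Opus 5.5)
Your overall structure is the same as the paper's: a trivial bound for $\varrho\le\sqrt{\nu t}$ and a tube bound for larger balls. Your treatment of $\varrho\le\sqrt{\nu t}$ and of $\varrho\ge\max\{R,\sqrt{\nu t}\}$ is correct.

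\textbf{The gap.} It is the intermediate range $\sqrt{\nu t}\le\varrho<R$, which is nonempty exactly in the regime of interest, $\nu t<R^2$. There your tube bound only gives $\int_{B(\xx_0,\varrho)}f\le CR\,\nu t/c_0$, hence a Morrey quantity $\le CR\,\nu t\,\varrho^{-3/q'}$. At $\varrho=\sqrt{\nu t}$ this is
\[
CR\,(\nu t)^{\frac{3}{2q}-\frac12}=C\,\frac{R}{\sqrt{\nu t}}\,(\nu t)^{\frac{3}{2q}},
\]
which exceeds the target by the factor $R/\sqrt{\nu t}\to\infty$ as $t\to0$. So this term is not ``dominated as before'', and it cannot be absorbed into $C$. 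The loss is the ratio $R/\varrho$, not a geometric constant, whereas $C$ must be independent of $t$. That uniformity is exactly what is used when the lemma is applied to $\vo_0$ and to the forcing near $\tau=0$.

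\textbf{The missing ingredient.} You need the bound the paper asserts, $|I(\xx_0,\varrho,\XX)|\le C\varrho$ for \emph{all} $\varrho>0$. That is, the set of arclength parameters whose normal cross-section meets $B(\xx_0,\varrho)\cap\operatorname{supp}f$ must have length $O(\varrho)$, not merely $O((\varrho+R)/c_0)$. The chord--arc condition gives this only for $\varrho\gtrsim R$; for small balls you need the non-degeneracy of the tubular coordinates.

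\textbf{One way to close it.} Split $f=f_1+f_2$, with $f_1$ the restriction of $f$ to $\{r<R/2\}$ and $f_2$ its restriction to $\{R/2\le r<R\}$.
\begin{itemize}
\item Since $f_2\le e^{-cR^2/(4\nu t)}$, for $\varrho<R$ its Morrey quantity is $\le C\varrho^{3/q}e^{-cR^2/(4\nu t)}\le CR^{3/q}e^{-cR^2/(4\nu t)}\le C(\nu t)^{\frac{3}{2q}}$, because $x^{3/(2q)}e^{-cx/4}$ is bounded.
\item For $f_1$ and $\varrho\le R/4$, the segment joining two points of $B(\xx_0,\varrho)\cap\{r<R/2\}$ stays in $\mathcal{T}_R$. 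There $|\nabla s|=1/h\le 4/3$ by \eqref{eq:derivadas_coordenadas} and $h\ge 1-R\kappa_{\max}\ge 3/4$. Hence the $s$-range is at most $\frac83\varrho$ and $\int_{B(\xx_0,\varrho)}f_1\le C\varrho\,\nu t$. The Morrey quantity is then $\le C\nu t\,\varrho^{3/q-2}\le C(\nu t)^{\frac{3}{2q}}$ for $\varrho\ge\sqrt{\nu t}$.
\item For $R/4<\varrho<R$, your chord--arc bound already suffices, since $\varrho+R\le 5\varrho$.
\end{itemize}
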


\begin{proof}
We estimate the Morrey norm of $e^{-c\frac{r^2}{\nu t}}\eta_R(r)$. Let $x\in\RR^3$ and $r_0>0$. We have to bound
\[
\frac{1}{r_0^{\frac{3}{q'}}}\int_{B(x,r_0)}e^{-c\frac{\operatorname{dist}(\yy,\XX)^2}{\nu t}}\eta_R(\operatorname{dist}(\yy,\XX))\,{\rm d}\yy.
\]
We distinguish two regimes, according to whether $r_0\leq\sqrt{\nu t}$ or $r_0>\sqrt{\nu t}$.

If $r_0\leq\sqrt{\nu t}$, then
\[
\frac{1}{r_0^{\frac{3}{q'}}}\int_{B(x,r_0)}e^{-c\frac{\operatorname{dist}(\yy,\XX)^2}{\nu t}}\eta_R(\operatorname{dist}(\yy,\XX))\,{\rm d}\yy\leq C r_0^{3-\frac{3}{q'}}\leq C(\nu t)^{\frac{3}{2q}}.
\]

Assume now that $r_0>\sqrt{\nu t}$. We define
\[
I(x,r_0,\XX):=\left\{s\in\RR:\exists\,\hy\in\TT^\perp(s,t)\ \text{such that}\ |\hy|\leq R\ \text{and}\ \XX(s,t)+\hy\in B(x,r_0)\right\},
\]
where $\TT^\perp(s,t)$ denotes the plane orthogonal to $\TT(s,t)$. By the non-degeneracy of the tubular coordinates and the chord-arc condition \eqref{chord_arc_condition}, we have
\[
|I(x,r_0,\XX)|\leq C r_0.
\]
Using local coordinates in the tubular neighborhood, we obtain
\begin{align*}
\frac{1}{r_0^{\frac{3}{q'}}}\int_{B(x,r_0)}e^{-c\frac{\operatorname{dist}(\yy,\XX)^2}{\nu t}}\eta_R(\operatorname{dist}(\yy,\XX))\,{\rm d}\yy
&\leq\frac{C}{r_0^{\frac{3}{q'}}}|I(x,r_0,\XX)|\int_0^R e^{-c\frac{r^2}{\nu t}}r\,{\rm d}r\\
&\leq C r_0^{1-\frac{3}{q'}}(\nu t)\\
&\leq C(\nu t)^{\frac{3}{2q}}.
\end{align*}
In the last inequality we have used $1-\frac{3}{q'}\leq0$, which follows from $q\geq3/2$, together with $r_0>\sqrt{\nu t}$. Taking the supremum over $x\in\RR^3$ and $r_0>0$, we obtain \eqref{tubular_gaussian_morrey_estimate}.
\end{proof}
We now estimate the Duhamel contribution generated by the forcing term. The estimate is based on the decomposition of the forcing obtained in Remark \ref{rem: cancelacion torsion} and on the estimates given in Section \ref{subsec: functional setting}.

\begin{lem}\label{estimacion 3/2 termino de fuerza}
Let $\bm{F}[\vo_0,\vv_0]$ be defined as in \eqref{termino de fuerza 1}, and assume that $\XX(\cdot,t)\in C^3$ for all $t\in(0,T)$. Suppose moreover that $\sqrt{\nu T}\ll1$ and that $\Gamma/\nu=O(1)$. Let
\[
l=\frac{1}{5\|\TT_s\|_{\infty}},
\]
and let $A(l,|\hx|)$ be the quantity defined in \eqref{s_limit_condition}. We define
\begin{equation}\label{constante explicita C_F}
\mathcal{C}_F(\XX,\Gamma,\nu T):=\Gamma\left(\|\TT_s\|_{\infty}+\frac{\Gamma}{\nu}\left[\|A(l,|\hx|)\|_{\infty}+\|\TT_s\|_\infty\left|\log(\|\TT'\|_\infty)\right|\right]+O\left(|\log(\nu T)|\sqrt{\nu T}\right)\right).
\end{equation}
Then, for every $3/2\leq p\leq\infty$,
\begin{equation}\label{cota termino de fuerza}
\|\tilde{\vo}_F\|_{T,p}\leq C\mathcal{C}_F(\XX,\Gamma,\nu T),
\end{equation}
where $\|\cdot\|_{T,p}$ is the norm defined in \eqref{def: temporal Morrey norm} and  $C>0$ is a constant independent of the parameters of the problem.
\end{lem}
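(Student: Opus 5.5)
We bound $\tilde{\vo}_F$ term by term, using the decomposition of $\bm F[\vo_0,\vv_0]$ from Remark \ref{rem: cancelacion torsion}. Each term is a Gaussian-type profile $e^{-c\rho^2}\eta_R$ (or a cutoff-derivative profile supported in $R/2\le r\le R$) times a time-dependent prefactor. The spatial Morrey norm of each such piece comes from Lemma \ref{lema 5.5}. The time singularity is then integrated through the Duhamel formula \eqref{definicion omega_F}, using Proposition \ref{proposition Giga Miya convolucion calor}.

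The basic device is the following. If a term has the form $G(\tau)=a(\tau)(\nu\tau)^{-k}P(\rho)e^{-c\rho^2}\eta_R$ with $|P|$ polynomially bounded, then Lemma \ref{lema 5.5} gives
\[
\|G(\tau)\|_p\le C|a(\tau)|(\nu\tau)^{-k+\frac{3}{2p}}.
\]
Proposition \ref{proposition Giga Miya convolucion calor}(i) then yields
\[
(\nu t)^{\frac12-\frac3{2p}}\Big\|\int_0^t e^{\nu(t-\tau)\Delta}G\,d\tau\Big\|_p\le C(\nu t)^{\frac12-\frac3{2p}}\int_0^t|a|(\nu\tau)^{-k+\frac3{2p}}d\tau .
\]
This is finite and bounded uniformly in $t$ precisely when $k\le 3/2$ and $|a|$ is bounded (or only logarithmically singular). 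For $k=3/2$ the integral gives $\nu^{-1}(\nu t)^{-\frac12+\frac{3}{2p}}$, which exactly matches the weight.

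Next we go through the terms and extract the prefactors.

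\emph{The term $-\nu\frac{\ea_r\cdot\TT_s}{h}\Omega_r\eta_R\TT$.} It has $k=3/2$ and $|a|\sim\nu\Gamma\|\TT_s\|_\infty$. The factor $\nu$ cancels the $\nu^{-1}$ from the time integral, leaving $\Gamma\|\TT_s\|_\infty$.

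\emph{The terms $\vo_0\cdot\nabla\vv_{L\text{-}O}$ and $-\vv^*\cdot\nabla\vo_0$.}
\begin{itemize}
\item For $\vo_0\cdot\nabla\vv_{L\text{-}O}$: since $\vo_0\parallel\TT$, only $\frac1h\partial_s$ of $\vv_{L\text{-}O}$ enters. This produces $\Omega\,\partial_s\ea_\varphi$-type terms of size $\Gamma^2\|\TT_s\|_\infty(\nu t)^{-3/2}\rho^{-1}(1-e^{-\rho^2/4})e^{-\rho^2/4}$.
\item For $-\vv^*\cdot\nabla\vo_0$: we use $|\Omega_r|\lesssim\Gamma(\nu t)^{-3/2}\rho e^{-\rho^2/4}$ together with \eqref{forma explicita v estrella}. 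The factor $\log(1+\rho)$ is absorbed into the Gaussian.
\end{itemize}
Both give $k=3/2$ with $|a|\lesssim\Gamma^2\big(\|A\|_\infty+\|\TT_s\|_\infty(1+|\log\|\TT'\|_\infty|)\big)$, plus a term $\Gamma^2\|\TT_s\|_\infty|\log\sqrt{\nu t}|e^{-R^2/4\nu t}$. After the $\nu^{-1}$ from the time integral, these yield the $\frac{\Gamma}{\nu}[\cdots]$ part of $\mathcal C_F$. The exponentially small piece is absorbed into $O(|\log\nu T|\sqrt{\nu T})$.

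\emph{The term $\vo_0\cdot\nabla\vv^*$.} It involves a derivative of the Biot--Savart remainder, for which we have no pointwise bound. Here we instead write $\vo_0\cdot\nabla\vv^*-\vv^*\cdot\nabla\vo_0=\nabla\wedge(\vv^*\wedge\vo_0)$, using that $\vo_0$ is divergence-free up to cutoff terms and $\vv^*$ is divergence-free. The derivative is then moved onto the heat kernel with Proposition \ref{proposition Giga Miya convolucion calor}(ii), taking $q=p$. The integrand $\|\vv^*\wedge\vo_0\|_p\lesssim |a|(\nu\tau)^{-1+\frac3{2p}}$ combined with $(\nu(t-\tau))^{-1/2}$ gives a convergent Beta integral with the same weight. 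The divergence-form terms in $\nu\Omega^\eta\Delta\TT$ are handled in the same way, as indicated in Remark \ref{rem: cancelacion torsion}.

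\emph{The remaining lower-order terms.}
\begin{itemize}
\item The torsion-free term $\frac rh\Gamma\log(\sqrt{\nu t})\Omega^\eta\partial_s|\TT_s|^2\ea_\varphi$ behaves like $\Gamma^2\|\TT\|_{C^3}\,|\log\nu t|\,(\nu t)^{-1/2}\rho e^{-\rho^2/4}$. This is $k=1/2$ with a log, so it contributes $\Gamma\frac{\Gamma}{\nu}|\log\nu T|\nu T$ up to constants. Here $C^3$ regularity is used, and the term sits inside $O(|\log\nu T|\sqrt{\nu T})$.
\item The non-divergence part of $\nu\Omega^\eta\Delta\TT$ is $O(\nu\Gamma(\nu t)^{-1})$, i.e. $k=1$, and gives $\Gamma\sqrt{\nu T}$.
\item The cutoff-derivative terms are supported where $\rho\ge R/(2\sqrt{\nu t})$. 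They are therefore $O(e^{-R^2/(16\nu t)})$ and harmless.
\end{itemize}

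The main obstacle is the treatment of $\vo_0\cdot\nabla\vv^*$. Proposition \ref{Lemma_v_o} only controls $\vv^*$, not $\nabla\vv^*$. So the divergence-form rewriting, and checking that the commutator terms from the cutoff and from $\nabla\cdot\vo_0\neq0$ are lower order, must be done carefully. A secondary issue is keeping the dependence on $A$ in the signed form, rather than through the Dini bound, so that the estimate stays torsion-uniform.
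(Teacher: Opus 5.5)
Your decomposition of $\bm F$, your use of Lemma \ref{lema 5.5}, and your divergence-form treatment of $\vo_0\cdot\nabla\vv^*$ and $\nu\Omega^\eta\Delta\TT$ follow the paper. However, your ``basic device'' does not cover the stated range of $p$. You use only the contraction $\|e^{\nu(t-\tau)\Delta}G\|_p\le\|G\|_p$ together with $\|G(\tau)\|_p\lesssim|a|(\nu\tau)^{-k+\frac{3}{2p}}$. Then the Duhamel integral $\int_0^t(\nu\tau)^{-k+\frac{3}{2p}}\,d\tau$ converges at $\tau=0$ only when $k<1+\frac{3}{2p}$, not for all $k\le\frac32$. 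The leading terms have $k=\frac32$: these are $\nu\frac{\ea_r\cdot\TT_s}{h}\Omega_r\eta_R\TT$, $\vo_0\cdot\nabla\vv_{L\text{-}O}$ and $\vv^*\cdot\nabla\vo_0$. For them convergence forces $p<3$. At $p=3$ the integral diverges logarithmically, and for $p>3$, in particular $p=\infty$, it diverges as a negative power of the lower limit. Your value $\nu^{-1}(\nu t)^{-\frac12+\frac{3}{2p}}$ is correct only when that exponent is positive. The same failure occurs at $p=\infty$ in two further places:
\begin{itemize}
\item the $k=1$ piece of $\nu\Omega^\eta\Delta\TT$ leads to $\int_0^t(\nu\tau)^{-1}\,d\tau$;
\item the divergence-form terms estimated via Proposition \ref{proposition Giga Miya convolucion calor}(ii) with $q=p$ lead to $\int_0^t[\nu(t-\tau)]^{-\frac12}(\nu\tau)^{-1}\,d\tau$.
\end{itemize}
Both integrals are infinite. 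So your argument proves the lemma only for $\frac32\le p<3$. It misses exactly the $\mathcal M^\infty$ endpoint that gives the pointwise control the paper is after.

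The missing ingredient is the $\mathcal M^q\to\mathcal M^p$ smoothing of the heat semigroup, $\|e^{\nu s\Delta}\mu\|_p\le C(\nu s)^{-\frac32(\frac1q-\frac1p)}\|\mu\|_q$ for $q\le p$. This comes from \cite{GigaMiyakawa}; the paper's proof uses it even though Proposition \ref{proposition Giga Miya convolucion calor}(i) only records $q=p$. It moves part of the singularity from $\tau=0$ to $\tau=t$.
\begin{itemize}
\item For the $(\nu\tau)^{-3/2}$ terms, the paper measures them in $\mathcal M^q$ with $q=\frac32$ when $p<\infty$ and $q=2$ when $p=\infty$. The integrand $[\nu(t-\tau)]^{-\frac32(\frac1q-\frac1p)}(\nu\tau)^{-\frac32+\frac{3}{2q}}$ is then integrable at both ends and still gives $\nu^{-1}(\nu t)^{-\frac12+\frac{3}{2p}}$.
\item For the divergence-form terms, the paper takes $q=p$ when $p<\infty$, but $q=4$ when $p=\infty$ (any $3<q<\infty$ works).
\item Your $k=1$ term at $p=\infty$ is handled the same way, with any $q>\frac32$.
\end{itemize}
With this change the rest of your scheme goes through.

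A smaller point: $\vo_0\cdot\nabla\vv^*$ is not really the main obstacle. Since $\nabla\cdot\vo_0=0$ holds exactly (the cutoff depends only on $r$, and $\nabla r=\ea_r\perp\TT$), you have $\vo_0\cdot\nabla\vv^*=\nabla\cdot(\vo_0\otimes\vv^*)$ with no commutator, and this is what the paper uses. Your curl identity additionally needs $\nabla\cdot\vv^*=0$, which fails in the curved tube. Writing $\vv_{L\text{-}O}=f(r)\ea_\varphi$, one finds $\nabla\cdot\vv_{L\text{-}O}=-\frac{f(r)}{h}\,\ea_\varphi\cdot\TT_s\neq0$. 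The resulting extra term $\vo_0\,(\nabla\cdot\vv^*)$ is of the same type as $\vo_0\cdot\nabla\vv_{L\text{-}O}$ and is harmless, but the claim should be corrected.
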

\begin{proof}
We begin by recalling the form of the forcing term obtained in Remark \ref{rem: cancelacion torsion}:
\[
\begin{aligned}
\bm{F}[\vo_0,\vv_0]=&\frac{r}{h}\frac{\Gamma}{4\pi}\log(\sqrt{\nu t})\Omega^\eta\frac12\partial_s\left(|\TT_s|^2\right)\ea_{\varphi}-\vv^*\cdot\nabla\vo_0+\vo_0\cdot\nabla\vv_{L\text{-}O}+\vo_0\cdot\nabla\vv^*\\
&+\nu\left(\frac1r\Omega\eta_{R,r}+2\Omega_r\eta_{R,r}+\Omega\eta_{R,rr}-\frac{\ea_r\cdot\TT_s}{h}\Omega\eta_{R,r}\right)\TT+\nu\left(-\frac{\ea_r\cdot\TT_s}{h}\Omega_r^\eta\TT+\Omega^\eta\Delta\TT\right).
\end{aligned}
\]
We split the estimate into three classes of terms.

First, we consider the leading-order terms, namely those with singularity $(\nu t)^{-3/2}$:
\[
\bm{F}_1=-\vv^*\cdot\nabla\vo_0+\vo_0\cdot\nabla\vv_{L\text{-}O}-\nu\frac{\ea_r\cdot\TT_s}{h}\Omega_r\eta_R\TT.
\]
Using that $\rho e^{-\rho^2}\leq C e^{-c\rho^2}$ for some $0<c<1$ and $C>0$,  together with
\[
|\vo_0\cdot\nabla\vv_{L\text{-}O}|\leq C\frac{\Gamma}{\sqrt{\nu t}}\Omega^\eta|\TT_s|,\qquad |\vv^*\cdot\nabla\vo_0|\leq C|\vv^*|(\Omega^\eta|\TT_s|+\Omega^\eta_r),
\]
we obtain
\begin{equation}\label{cota F_1}
|\bm{F}_1|\leq C\Gamma\left(|\TT_s|(\nu+\Gamma)+|\vv^*|(1+\sqrt{\nu t}|\TT_s|)\right)\frac{e^{-c\frac{r^2}{\nu t}}}{(\nu t)^{3/2}}\eta_R.
\end{equation}
Then, recalling Proposition \ref{Lemma_v_o},
\[
|\vv^*|\leq C\Gamma\|\TT'\|_\infty\left(\log\left(1+\frac{r}{\sqrt{\nu t}}\right)+\frac{|A(l,r)|}{\|\TT'\|_\infty}+\left|\log(\|\TT'\|_\infty)\right|+1+\left|\log\bigl(\sqrt{\nu t}\bigr)\right|e^{-\frac{R^2}{4\nu t}}\right),
\]
and using that $\log(1+\rho)e^{-c\rho^2}\leq C e^{-c'\rho^2}$, for some $0<c'<c$ and $C>0$, we define $C^*$ so that
\begin{equation}\label{weighted v star}
|\vv^*|e^{-c\frac{r^2}{\nu t}}\leq \Gamma C^*e^{-c'\frac{r^2}{\nu t}},
\end{equation}
this is,
\begin{equation}\label{C estrella}
C^*:=C\left(\|A(l,r)\|_\infty+\|\TT'\|_\infty\left[1+\left|\log(\|\TT'\|_\infty)\right|\right]+\|\TT'\|_\infty\sup_{0<t<T}\left|\log\bigl(\sqrt{\nu t}\bigr)\right|e^{-\frac{R^2}{4\nu t}}\right).
\end{equation}
Hence, \eqref{cota F_1} and \eqref{weighted v star} yield
\[
|\bm{F}_1|\leq C\Gamma\left(\|\TT_s\|_\infty(\nu+\Gamma)+\Gamma C^*(1+\sqrt{\nu T}\|\TT_s\|_\infty)\right)\frac{e^{-c\frac{r^2}{\nu t}}}{(\nu t)^{3/2}}\eta_R.
\]
On the other hand, Proposition \ref{proposition Giga Miya convolucion calor} and Lemma \ref{lema 5.5} yield
\[
\left\|\int_0^t e^{\nu(t-\tau)\Delta}\frac{e^{-c\frac{r^2}{\nu \tau}}}{(\nu \tau)^{3/2}}\eta_R\,{\rm d}\tau\right\|_p\leq C\int_0^t[\nu(t-\tau)]^{-\frac{1}{2}(\frac{3}{q}-\frac{3}{p})}(\nu \tau)^{-(\frac{3}{2}-\frac{3}{2q})}\,{\rm d}\tau.
\]
Then, choosing $q$ so that the time singularities are integrable, more precisely,
\[
q=
\begin{cases}
\frac32, & \text{if } \frac32\leq p<\infty,\\
2, & \text{if } p=\infty,
\end{cases}
\]
we get
\[
\left\|\int_0^t e^{\nu(t-\tau)\Delta}\frac{e^{-c\frac{r^2}{\nu \tau}}}{(\nu \tau)^{3/2}}\eta_R\,{\rm d}\tau\right\|_p\leq \frac{C}{\nu}(\nu t)^{\frac{3}{2p}-\frac12}.
\]
Combining the previous estimates, we obtain
\[
\left\|\int_0^t e^{\nu(t-\tau)\Delta}\bm{F}_1(\tau)\,{\rm d}\tau\right\|_{T,p}\leq C\frac{\Gamma}{\nu}\left(\|\TT_s\|_\infty(\nu+\Gamma)+\Gamma C^*(1+\sqrt{\nu T}\|\TT_s\|_\infty)\right).
\]

Second, we consider the terms which are written in divergence form,
\[
\vo_0\cdot\nabla\vv^*+\nu\Omega^\eta\Delta\TT.
\]
Since $\nabla\cdot\vo_0=0$, we have
\[
(\vo_0\cdot\nabla)\vv^*=\nabla\cdot(\vo_0\otimes\vv^*).
\]
Moreover,
\[
\nu\Omega^\eta\Delta\TT=\nu\nabla\cdot(\Omega^\eta\nabla\TT)-\nu\nabla\Omega^\eta\cdot\nabla\TT.
\]
The last term is of the same type as the terms in $\bm{F}_1$, and is estimated analogously. Hence we focus on
\[
\bm{F}_2=\nabla\cdot(\vo_0\otimes\vv^*)+\nu\nabla\cdot(\Omega^\eta\nabla\TT).
\]
Using Proposition \ref{proposition Giga Miya convolucion calor} together with \eqref{weighted v star}, we get
\[
\begin{aligned}
\left\|\int_0^t e^{\nu(t-\tau)\Delta}\bm{F}_2(\tau)\,{\rm d}\tau\right\|_p
&\leq\int_0^t\left\|\nabla e^{\nu(t-\tau)\Delta}\big(\vo_0\otimes\vv^*+\nu\Omega^\eta\nabla\TT\big)(\tau)\right\|_p\,{\rm d}\tau\\
&\leq C\Gamma(\Gamma C^*+\nu\|\TT_s\|_\infty)\int_0^t[\nu(t-\tau)]^{-\frac12-\frac12(\frac3q-\frac3p)}\left\|\frac{e^{-c\frac{r^2}{\nu \tau}}}{\nu \tau}\eta_R\right\|_q\,{\rm d}\tau.
\end{aligned}
\]
Then, by Lemma \ref{lema 5.5},
\[
\left\|\int_0^t e^{\nu(t-\tau)\Delta}\bm{F}_2(\tau)\,{\rm d}\tau\right\|_p\leq C\Gamma(\Gamma C^*+\nu\|\TT_s\|_\infty)\int_0^t[\nu(t-\tau)]^{-\frac12-\frac12(\frac3q-\frac3p)}(\nu\tau)^{-(1-\frac{3}{2q})}\,{\rm d}\tau.
\]
We choose $q$ so that the time singularities are integrable. More precisely,
\[
q=
\begin{cases}
p, & \text{if } \frac32\leq p<\infty,\\
4, & \text{if } p=\infty.
\end{cases}
\]
With this choice,
\[
\left\|\int_0^t e^{\nu(t-\tau)\Delta}\bm{F}_2(\tau)\,{\rm d}\tau\right\|_{T,p}\leq C\frac{\Gamma}{\nu}(\Gamma C^*+\nu\|\TT_s\|_\infty).
\]

Finally, the non-divergence terms isolated in Remark \ref{rem: cancelacion torsion} have been rewritten as
\[
\frac{r}{h}\frac{\Gamma}{4\pi}\log(\sqrt{\nu t})\Omega^\eta\frac12\partial_s(|\TT_s|^2)\ea_\varphi,
\]
which is less singular than $\bm{F}_1$ and yields a contribution of order $O(|\log(\nu T)|\sqrt{\nu T})$ in the bound for $\|\tilde{\vo}_F\|_{T,p}$, under the geometric bounds assumed above. The remaining cutoff terms are also less singular in $\nu t$. Combining the estimates above and using $\Gamma/\nu=O(1)$, we obtain
\[
\|\tilde{\vo}_F\|_{T,p}\leq C\Gamma\left(\|\TT_s\|_\infty+\frac{\Gamma}{\nu}C^*+O(|\log(\nu T)|\sqrt{\nu T})\right).
\]
The explicit form of $\mathcal{C}_F$ in the statement follows from \eqref{C estrella}.
\end{proof}
At this point, we are ready to prove the existence result. The estimate of the forcing term obtained in Lemma \ref{estimacion 3/2 termino de fuerza} provides the inhomogeneous contribution in the Duhamel formula. The remaining terms will be controlled by a fixed point argument in the time-weighted Morrey spaces introduced in \eqref{def: temporal Morrey norm}, using the heat semigroup and Biot--Savart estimates recalled above.
\begin{thm}\label{Teorema vorticidad}
Let $\XX\in C^3$ be an open curve in $\mathbb R^3$, evolving under the binormal flow, with initial data $\XX(s,0)=\XX_0(s)$. Assume that the radius $R$ of the tubular neighborhood given by \eqref{tubular intro} is well defined and that there exists $c_0>0$ such that the chord-arc condition \eqref{uniform_chord_arc_condition_intro} holds uniformly in time on $(0,T)$. Assume also that $\Gamma/\nu$ and $\nu T$ are sufficiently small.  Then there exists a solution $\bm{\omega}(\xx,t)$ to \eqref{vorticity_NS}, defined on $(0,T)$, such that, for $\xx\in\mathcal{T}_R(t)$, its expression in the local coordinates $(r,\varphi,s)$ is given by
\[
\vo(r,\varphi, s,t)=\frac{\Gamma}{4\pi}\frac{e^{-\frac{r^2}{4\nu t}}}{\nu t}\eta_R(r)\TT(s,t)+\tilde{\vo}(r,\varphi, s,t),
\]
for $\xx\in\mathcal{T}_R(t)$. Moreover, for every $\frac32\leq p\leq\infty$, the perturbation satisfies
\begin{equation}\label{cota p vorticidad}
\sup_{0<t<T}(\nu t)^{\frac12-\frac{3}{2p}}\|\tilde{\vo}(t)\|_p\leq C\mathcal{C}_F(\XX,\Gamma,\nu T),
\end{equation}
where $\mathcal{C}_F(\XX,\Gamma,\nu T)$ is defined in \eqref{constante explicita C_F}. Furthermore, for every $\frac32\leq p<3$,
\begin{equation}\label{convergencia omega tilde p menor 3}
\|\tilde{\vo}(t)\|_p\leq C\mathcal{C}_F(\XX,\Gamma,\nu T)(\nu t)^{-\frac12+\frac{3}{2p}}\longrightarrow0\qquad\text{as }t\to0.
\end{equation}
In addition, the velocity induced by the perturbation satisfies
\begin{equation}\label{cota infinito velocidad}
\sup_{0<t<T}\|\tilde{\vv}(t)\|_\infty\leq C\mathcal{C}_F(\XX,\Gamma,\nu T).
\end{equation}
\end{thm}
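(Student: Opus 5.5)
We would solve \eqref{NS_tilde_final} by a Banach fixed point for the map
\[
\Phi(\tilde{\vo})(t)=\tilde{\vo}_F(t)+\int_0^t e^{\nu(t-\tau)\Delta}\bigl(NL[\tilde{\vo},\tilde{\vv}]+L[\vo_0,\tilde{\vo}]\bigr)(\tau)\,{\rm d}\tau,\qquad \tilde{\vv}=K*\tilde{\vo},
\]
on a closed ball of the space $Y_T:=X_{T,3/2}\cap X_{T,\infty}$, normed by $\|\cdot\|_{T,3/2}+\|\cdot\|_{T,\infty}$. By Hölder-type interpolation of Morrey norms, this space controls every $\|\cdot\|_{T,p}$ with $3/2\le p\le\infty$. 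The ball has radius $2C\mathcal{C}_F$. Lemma \ref{estimacion 3/2 termino de fuerza} already gives $\|\tilde{\vo}_F\|_{Y_T}\le C\mathcal{C}_F$. It therefore remains to show that the Duhamel part maps the ball into itself and is contractive, provided $\Gamma/\nu$ and $\nu T$ are small.

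\textbf{Step 1: velocity bounds.} We first control $\tilde{\vv}$. Apply Proposition \ref{proposition Giga Miya convolucion biot savart}(ii) with $q=3/2$ and $p=\infty$. The time weights $(\nu t)^{-1/2+1}$ and $(\nu t)^{-1/2}$ then combine in the interpolation exponents ($1/3$ and $2/3$) to give
\[
\|\tilde{\vv}(t)\|_\infty\le C\|\tilde{\vo}\|_{Y_T},
\]
with no time singularity. This already proves \eqref{cota infinito velocidad}, and we will use it throughout.

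\textbf{Step 2: the nonlinear term.} Since $\nabla\cdot\tilde{\vv}=\nabla\cdot\tilde{\vo}=0$, we can write
\[
NL=\nabla\cdot(\tilde{\vo}\otimes\tilde{\vv}-\tilde{\vv}\otimes\tilde{\vo}).
\]
We move the derivative onto the heat kernel using Proposition \ref{proposition Giga Miya convolucion calor}(ii). The product satisfies $\|\tilde{\vo}\otimes\tilde{\vv}\|_q\le\|\tilde{\vv}\|_\infty\|\tilde{\vo}\|_q$. The resulting time integrals are Beta functions that reproduce the weight $(\nu t)^{3/(2p)-1/2}$ up to a factor $1/\nu$. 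This gives
\[
\|\cdot\|_{Y_T}\le \frac{C}{\nu}\|\tilde{\vo}\|_{Y_T}^2\le C\frac{\mathcal{C}_F}{\nu}\|\tilde{\vo}\|_{Y_T},
\]
and $\mathcal{C}_F/\nu$ is small because $\mathcal{C}_F$ carries a factor $\Gamma$.

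\textbf{Step 3: the linear terms, split by type.}
\begin{itemize}
\item The pair $-(\vv_0\cdot\nabla)\tilde{\vo}+(\tilde{\vo}\cdot\nabla)\vv_0$ is written as $\nabla\cdot(\tilde{\vo}\otimes\vv_0-\vv_0\otimes\tilde{\vo})$. Here $\vv_0$ is not bounded, since $|\vv_{L\text{-}O}|\sim\Gamma/\sqrt{\nu t}$ and $\XX_t\sim\Gamma|\log \nu t|$. So we use $\|\vv_0(t)\|_\infty\le C\Gamma(\nu t)^{-1/2}$ from Proposition \ref{Lemma_v_o}. After the Beta integral this gives a bound $C(\Gamma/\nu)\|\tilde{\vo}\|_{Y_T}$, which is scale invariant.
\item The pair $-(\tilde{\vv}\cdot\nabla)\vo_0+(\vo_0\cdot\nabla)\tilde{\vv}$ is written in the same divergence form $\nabla\cdot(\vo_0\otimes\tilde{\vv}-\tilde{\vv}\otimes\vo_0)$. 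By Lemma \ref{lema 5.5} with $c=1/4$ we have $\|\vo_0(\tau)\|_q\le C\Gamma(\nu \tau)^{-1+3/(2q)}$. Combined with Step 1, this again gives a factor $C\Gamma/\nu$.
\end{itemize}

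\textbf{Main obstacle.} The hard part is the exponent bookkeeping for $p=\infty$. With $q=p$ the heat kernel contributes $(t-\tau)^{-1/2}$ while the data contribute $\tau^{-1/2}\cdot\tau^{-1/2}$, and this pairing gives a nonintegrable endpoint. We would fix it as in Lemma \ref{estimacion 3/2 termino de fuerza}: choose an intermediate $q$ (for instance $q=4$ or $q=2$ when $p=\infty$) so that both exponents stay below $1$. To allow this, the ball norm should include all of $X_{T,q}$ for $q\in[3/2,\infty]$, or equivalently be handled by interpolation. Once the linear operator has norm at most $C\Gamma/\nu<1/4$ and the quadratic part is small, the contraction follows, and the fixed point satisfies \eqref{cota p vorticidad}.

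Finally, \eqref{convergencia omega tilde p menor 3} is immediate from the weighted bound, since $-\tfrac12+\tfrac3{2p}>0$ for $p<3$. In $\mathcal{T}_{R/2}$ we have $\eta_R=1$, so the representation of $\vo$ holds there. Adding $\vo_0$ back, and checking that $\vo_0(t)\rightharpoonup\Gamma\delta_{\XX_0}\TT$ weakly as $t\to0$, shows that $\vo$ solves \eqref{vorticity_NS} with the prescribed initial data.
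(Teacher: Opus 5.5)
Your argument is correct in substance and uses the same ingredients as the paper: the Duhamel map, the divergence form of the nonlinear and linear terms, the Giga--Miyakawa heat and Biot--Savart bounds, Lemma \ref{lema 5.5} for $\vo_0$, and Lemma \ref{estimacion 3/2 termino de fuerza} for $\tilde{\vo}_F$. The difference is the space in which you run the contraction.

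The paper fixes $q\in(\frac32,3)$ and works in $X_{T,q}\cap X_{T,2q}$. There the pair $q<3<2q$ feeds Proposition \ref{proposition Giga Miya convolucion biot savart}(ii), and every Duhamel integral is integrable with the single source exponent $q$. The endpoint $p=\frac32$ and the range $p\geq 6$ are then recovered afterwards from the integral equation.

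You instead put the endpoints $\frac32$ and $\infty$ directly into the space. Every intermediate $p$ then follows from the per-ball identity $r^{-3/p'}|\mu|(B)=\bigl(r^{-1}|\mu|(B)\bigr)^{\theta}\bigl(r^{-3}|\mu|(B)\bigr)^{1-\theta}$ with $\theta=\frac{3}{2p}$, which is compatible with the time weights. This gives one fixed point carrying all the bounds at once. You therefore need neither to identify the fixed points produced by different choices of $q$, nor the paper's absorption step for $\|\tilde{\vo}\|_{T,3/2}$, which tacitly requires that norm to be finite beforehand. The price is the non-integrable pairing $(t-\tau)^{-1/2}\tau^{-1}$ for the two linear terms at $p=\infty$. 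You correctly resolve it by passing through a source exponent in $(3,\infty)$, controlled by the same interpolation.

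Several computations need fixing, though none is fatal.
\begin{enumerate}
\item In Step 1, Proposition \ref{proposition Giga Miya convolucion biot savart}(ii) with $(q,p)=(\frac32,\infty)$ has exponents $\frac12,\frac12$, as scaling forces, not $\frac13,\frac23$. With $\frac12,\frac12$ the weights $(\nu t)^{1/2}$ and $(\nu t)^{-1/2}$ cancel exactly; your exponents would leave a factor $(\nu t)^{\pm 1/6}$.
\item For the divergence-form terms at $p=\infty$ the heat factor is $[\nu(t-\tau)]^{-\frac12-\frac{3}{2q}}$. So $q=2$ fails and you need $q>3$; your $q=4$ works.
\item In Step 2 the Beta integral gives $\frac{C}{\nu}(\nu T)^{1/2}\|\tilde{\vo}\|_{Y_T}^2$, not $\frac{C}{\nu}\|\tilde{\vo}\|_{Y_T}^2$. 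The extra factor is needed for dimensional consistency and only helps.
\item Proposition \ref{Lemma_v_o} describes $\vv_0$ only inside $\mathcal{T}_R$. The global bound $\|\vv_0(t)\|_\infty\leq C\Gamma(\nu t)^{-1/2}$ should instead come from Proposition \ref{proposition Giga Miya convolucion biot savart}(ii) with $(\frac32,\infty)$ together with Lemma \ref{lema 5.5}. This is what the paper does, and it is exactly the argument of your Step 1.
\end{enumerate}
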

\begin{proof}
We consider the map
\[
\Phi(\tilde{\vo})(t):=\tilde{\vo}_F(t)+\int_0^t e^{\nu(t-\tau)\Delta}\left(NL[\tilde{\vo},\tilde{\vv}]+L[\vo_0,\tilde{\vo}]\right)(\tau)\,{\rm d}\tau,
\]
where $\tilde{\vo}_F$ is defined in \eqref{definicion omega_F} and $\tilde{\vv}=K*\tilde{\vo}$. We shall write the nonlinear and linear terms in divergence form. For two divergence-free vector fields $\vo$ and $\vv$, define
\[
\bm{W}^i(\vo,\vv):=\omega^i\vv-v^i\vo.
\]
Then
\[
\partial_i\bm{W}^i(\vo,\vv)=(\vo\cdot\nabla)\vv-(\vv\cdot\nabla)\vo.
\]
Hence the map $\Phi$ can be written as
\begin{equation}\label{mapa Phi}
\Phi(\tilde{\vo})(t):=\tilde{\vo}_F(t)+\int_0^t\partial_i e^{\nu(t-\tau)\Delta}\bm{J}^i(\tilde{\vo},\vo_0)(\tau)\,{\rm d}\tau,
\end{equation}
where
\[
\bm{J}^i(\tilde{\vo},\vo_0):=\bm{W}^i(\tilde{\vo},K*\tilde{\vo})+\bm{W}^i(\tilde{\vo},K*\vo_0)+\bm{W}^i(\vo_0,K*\tilde{\vo}).
\]
Let us fix $q$ such that
\[
\frac32<q<3.
\]
We work in the Banach space
\[
X:=X_{T,q}\cap X_{T,2q},\qquad \|f\|_X:=\max\left\{\|f\|_{T,q},\|f\|_{T,2q}\right\},
\]
where the norms $\|\cdot\|_{T,p}$ are defined in \eqref{def: temporal Morrey norm}. The first part of the proof is devoted to proving that $\Phi$ is a contraction in a ball of $X$, provided that $\Gamma/\nu$ and $\nu T$ are sufficiently small. Once the fixed point has been obtained in $X$, we return to the integral formulation to recover the remaining estimates in the endpoint case $p=3/2$ and in the range $6\leq p\leq\infty$.

\emph{Fixed point in the space $X$.}
Let $p\in\{q,2q\}$. By Proposition \ref{proposition Giga Miya convolucion calor}, we have\small
\begin{equation}\label{Picard vorticity 2}
\begin{aligned}
\left\|\nabla e^{\nu(t-\tau)\Delta}\bm{J}(\tilde{\vo},\vo_0)(\tau)\right\|_{p}\hspace{-1.8mm}\leq C[\nu(t-\tau)]^{-\frac{1}{2}-\frac{3}{2}(\frac{1}{q}-\frac{1}{p})}
\left(
\|\tilde{\vo}\|_q\|K*\tilde{\vo}\|_{\infty}
+\|\tilde{\vo}\|_q\|K*\vo_0\|_{\infty}
+\|\vo_0\|_q\|K*\tilde{\vo}\|_{\infty}
\right).
\end{aligned}
\end{equation}\normalsize
Using Proposition \ref{proposition Giga Miya convolucion biot savart}, we estimate
\begin{align*}
\|\tilde{\vo}\|_q\|K*\tilde{\vo}\|_\infty&\leq C\|\tilde{\vo}\|_q^{\frac{2q}{3}}\|\tilde{\vo}\|_{2q}^{2-\frac{2q}{3}},\\
\|\tilde{\vo}\|_q\|K*\vo_0\|_\infty&\leq C\|\tilde{\vo}\|_q\|\vo_0\|_q^{\frac{2q}{3}-1}\|\vo_0\|_{2q}^{2-\frac{2q}{3}},\\
\|\vo_0\|_q\|K*\tilde{\vo}\|_\infty&\leq C\|\vo_0\|_q\|\tilde{\vo}\|_q^{\frac{2q}{3}-1}\|\tilde{\vo}\|_{2q}^{2-\frac{2q}{3}}.
\end{align*}
Moreover, by Lemma \ref{lema 5.5},
\[
\|\vo_0(\tau)\|_q\leq C\Gamma(\nu\tau)^{-1+\frac{3}{2q}},\qquad \|\vo_0(\tau)\|_{2q}\leq C\Gamma(\nu\tau)^{-1+\frac{3}{4q}}.
\]
Therefore,
\begin{align*}
\|\tilde{\vo}\|_q\|K*\tilde{\vo}\|_\infty&\leq C\|\tilde{\vo}\|_{T,q}^{\frac{2q}{3}}\|\tilde{\vo}\|_{T,2q}^{2-\frac{2q}{3}}(\nu\tau)^{-\frac12+\frac{3}{2q}},\\
\|\tilde{\vo}\|_q\|K*\vo_0\|_\infty&\leq C\Gamma\|\tilde{\vo}\|_{T,q}(\nu\tau)^{-1+\frac{3}{2q}},\\
\|\vo_0\|_q\|K*\tilde{\vo}\|_\infty&\leq C\Gamma\|\tilde{\vo}\|_{T,q}^{\frac{2q}{3}-1}\|\tilde{\vo}\|_{T,2q}^{2-\frac{2q}{3}}(\nu\tau)^{-1+\frac{3}{2q}}.
\end{align*}
Combining these estimates with \eqref{mapa Phi}, we get
\begin{align*}
\|\Phi(\tilde{\vo})(t)\|_p\leq& \|\tilde{\vo}_F(t)\|_p+C\|\tilde{\vo}\|_{T,q}^{\frac{2q}{3}}\|\tilde{\vo}\|_{T,2q}^{2-\frac{2q}{3}}\int_0^t[\nu(t-\tau)]^{-\frac12-\frac32(\frac1q-\frac1p)}(\nu\tau)^{-\frac12+\frac{3}{2q}}\,{\rm d}\tau\\
&+C\Gamma\left(\|\tilde{\vo}\|_{T,q}+\|\tilde{\vo}\|_{T,q}^{\frac{2q}{3}-1}\|\tilde{\vo}\|_{T,2q}^{2-\frac{2q}{3}}\right)\int_0^t[\nu(t-\tau)]^{-\frac12-\frac32(\frac1q-\frac1p)}(\nu\tau)^{-1+\frac{3}{2q}}\,{\rm d}\tau.
\end{align*}
Since $3/2<q<3$ and $p\in\{q,2q\}$, both time integrals are finite. Multiplying by $(\nu t)^{\frac12-\frac{3}{2p}}$, taking the supremum over $0<t<T$, and using Lemma \ref{estimacion 3/2 termino de fuerza}, we obtain
\begin{align*}
\|\Phi(\tilde{\vo})\|_{T,p}\leq& C\mathcal{C}_F+\frac{C}{\nu}(\nu T)^{\frac12}\|\tilde{\vo}\|_{T,q}^{\frac{2q}{3}}\|\tilde{\vo}\|_{T,2q}^{2-\frac{2q}{3}}\\
&+C\frac{\Gamma}{\nu}\left(\|\tilde{\vo}\|_{T,q}+\|\tilde{\vo}\|_{T,q}^{\frac{2q}{3}-1}\|\tilde{\vo}\|_{T,2q}^{2-\frac{2q}{3}}\right).
\end{align*}
Taking the maximum over $p\in\{q,2q\}$, this gives
\begin{equation}\label{estimacion Phi X}
\|\Phi(\tilde{\vo})\|_X\leq C\mathcal{C}_F+\frac{C}{\nu}(\nu T)^{\frac12}\|\tilde{\vo}\|_X^2+C\frac{\Gamma}{\nu}\|\tilde{\vo}\|_X.
\end{equation}
With these scheme let us define
\[
X=X_{T,q}\cap X_{T,2q},\qquad \|f\|_X=\max\left\{\|f\|_{T,q},\|f\|_{T,2q}\right\}, \qquad B_M:=\{f\in X:\|f\|_X\leq M\},
\]
and choose
\[
M:=2C\mathcal{C}_F.
\]
If $\tilde{\vo}\in B_M$, then \eqref{estimacion Phi X} gives
\[
\|\Phi(\tilde{\vo})\|_X
\leq \frac{M}{2}+\frac{C}{\nu}(\nu T)^{\frac12}M^2+C\frac{\Gamma}{\nu}M.
\]
Hence $\Phi(B_M)\subset B_M$ provided that
\begin{equation}\label{smallness condition}
\frac{C}{\nu}(\nu T)^{\frac12}M+C\frac{\Gamma}{\nu}\leq \frac12.
\end{equation}
The same estimates applied to the difference of two elements of $B_M$ yield
\[
\|\Phi(\tilde{\vo}_1)-\Phi(\tilde{\vo}_2)\|_X
\leq
\left(
\frac{C}{\nu}(\nu T)^{\frac12}M+C\frac{\Gamma}{\nu}
\right)
\|\tilde{\vo}_1-\tilde{\vo}_2\|_X.
\]
Thus, under the same smallness condition \eqref{smallness condition}, $\Phi$ is a contraction on $B_M$. Following the fixed point argument of \cite{GigaMiyakawa} and the references therein, condition \eqref{smallness condition} yields a unique fixed point
\[
(\nu t)^{\frac12-\frac{3}{2q}}\tilde{\vo}\in BC((0,T);\mathcal M^q),
\qquad
(\nu t)^{\frac12-\frac{3}{4q}}\tilde{\vo}\in BC((0,T);\mathcal M^{2q}),
\]
with
\[
\|\tilde{\vo}\|_{T,q}+\|\tilde{\vo}\|_{T,2q}\leq C\mathcal{C}_F.
\]
Then, Proposition \ref{proposition Giga Miya convolucion biot savart} yields
\begin{equation}\label{cota infinito velocidad prueba}
\|K*\tilde{\vo}(t)\|_\infty\leq C\mathcal{C}_F,\qquad 0<t<T.
\end{equation}
Since the construction above can be performed for any $q\in(\frac32,3)$, this gives the estimate for every $\frac32<p<6$.

\emph{Recovery of the remaining estimates.}
We now use the fixed point obtained in the previous step to recover the endpoint $p=\frac32$ and the range $6\leq p\leq\infty$. We start from the integral formulation
\[
\tilde{\vo}(t)=\tilde{\vo}_F(t)+\int_0^t\partial_i e^{\nu(t-\tau)\Delta}\bm{J}^i(\tilde{\vo},\vo_0)(\tau)\,{\rm d}\tau.
\]
First, we consider the endpoint $p=\frac32$. Applying Proposition \ref{proposition Giga Miya convolucion calor} with $q=p=\frac32$, together with Lemma \ref{lema 5.5} and \eqref{cota infinito velocidad prueba}, we get
\begin{align*}
\|\tilde{\vo}(t)\|_{\frac32}\leq\|&\tilde{\vo}_F(t)\|_{\frac32}+C\int_0^t[\nu(t-\tau)]^{-\frac12}\|\tilde{\vo}(\tau)\|_{\frac32}\|K*\tilde{\vo}(\tau)\|_\infty\,{\rm d}\tau\\
&+C\int_0^t[\nu(t-\tau)]^{-\frac12}\|\tilde{\vo}(\tau)\|_{\frac32}\|K*\vo_0(\tau)\|_\infty\,{\rm d}\tau\\
&+C\int_0^t[\nu(t-\tau)]^{-\frac12}\|\vo_0(\tau)\|_{\frac32}\|K*\tilde{\vo}(\tau)\|_\infty\,{\rm d}\tau.
\end{align*}
Using
\[
\|K*\tilde{\vo}(\tau)\|_\infty\leq C\mathcal{C}_F,\qquad \|K*\vo_0(\tau)\|_\infty\leq C\Gamma(\nu\tau)^{-\frac12},\qquad \|\vo_0(\tau)\|_{\frac32}\leq C\Gamma,
\]
and the definition of $\|\tilde{\vo}\|_{T,\frac32}$, we obtain
\begin{align*}
\|\tilde{\vo}(t)\|_{\frac32}\leq &C\mathcal{C}_F(\nu t)^{\frac12}+C\mathcal{C}_F\|\tilde{\vo}\|_{T,\frac32}\int_0^t[\nu(t-\tau)]^{-\frac12}(\nu\tau)^{\frac12}\,{\rm d}\tau\\
&+C\Gamma\|\tilde{\vo}\|_{T,\frac32}\int_0^t[\nu(t-\tau)]^{-\frac12}\,{\rm d}\tau+C\Gamma\mathcal{C}_F\int_0^t[\nu(t-\tau)]^{-\frac12}\,{\rm d}\tau.
\end{align*}
Multiplying by $(\nu t)^{-\frac12}$ and taking the supremum in $0<t<T$, we get
\[
\|\tilde{\vo}\|_{T,\frac32}\leq C\mathcal{C}_F+\frac{C}{\nu}(\nu T)^{\frac12}\mathcal{C}_F\|\tilde{\vo}\|_{T,\frac32}+C\frac{\Gamma}{\nu}\|\tilde{\vo}\|_{T,\frac32}+C\frac{\Gamma}{\nu}\mathcal{C}_F.
\]
Hence, for sufficiently small $\Gamma/\nu$ and $\nu T$,
\begin{equation}\label{cota tres medios vorticidad}
\|\tilde{\vo}\|_{T,\frac32}\leq C\mathcal{C}_F.
\end{equation}

We now consider the range $6\leq p\leq\infty$. Let $r\in(3,6)$ be such that $r\leq p$. Applying Proposition \ref{proposition Giga Miya convolucion calor}, Lemma \ref{lema 5.5}, and \eqref{cota infinito velocidad prueba}, we obtain
\begin{align*}
\|\tilde{\vo}(t)\|_p\leq&\|\tilde{\vo}_F(t)\|_p+C\mathcal{C}_F\|\tilde{\vo}\|_{T,r}\int_0^t[\nu(t-\tau)]^{-\frac12-\frac32(\frac1r-\frac1p)}(\nu\tau)^{-\frac12+\frac{3}{2r}}\,{\rm d}\tau\\
&+C\Gamma\|\tilde{\vo}\|_{T,r}\int_0^t[\nu(t-\tau)]^{-\frac12-\frac32(\frac1r-\frac1p)}(\nu\tau)^{-1+\frac{3}{2r}}\,{\rm d}\tau\\
&+C\Gamma\mathcal{C}_F\int_0^t[\nu(t-\tau)]^{-\frac12-\frac32(\frac1r-\frac1p)}(\nu\tau)^{-1+\frac{3}{2r}}\,{\rm d}\tau.
\end{align*}
The time integrals are finite because
\[
\frac12+\frac32\left(\frac1r-\frac1p\right)<1,
\]
which follows from $r>3$ and $r\leq p$. Multiplying by $(\nu t)^{\frac12-\frac{3}{2p}}$ and taking the supremum in $0<t<T$, we get
\[
\|\tilde{\vo}\|_{T,p}\leq C\mathcal{C}_F+\frac{C}{\nu}(\nu T)^{\frac12}\mathcal{C}_F\|\tilde{\vo}\|_{T,r}+C\frac{\Gamma}{\nu}\left(\|\tilde{\vo}\|_{T,r}+\mathcal{C}_F\right).
\]
Since $3<r<6$, the estimate for $\|\tilde{\vo}\|_{T,r}$ was already obtained in the previous step. Therefore, for $6\leq p\leq\infty$,
\begin{equation}\label{cota p grande vorticidad}
\|\tilde{\vo}\|_{T,p}\leq C\mathcal{C}_F.
\end{equation}
Combining the estimates for $\frac32<p<6$, \eqref{cota tres medios vorticidad}, and \eqref{cota p grande vorticidad}, we obtain
\[
\|\tilde{\vo}\|_{T,p}\leq C\mathcal{C}_F,\qquad \frac32\leq p\leq\infty.
\]
This proves \eqref{cota p vorticidad}.

Finally, \eqref{cota infinito velocidad prueba} gives
\[
\sup_{0<t<T}\|K*\tilde{\vo}(t)\|_\infty\leq C\mathcal{C}_F,
\]
which is \eqref{cota infinito velocidad}.

\emph{Vanishing as $t\to0$.}
If $\frac32\leq p<3$, then
\[
-\frac12+\frac{3}{2p}>0.
\]
Therefore, from \eqref{cota p vorticidad},
\[
\|\tilde{\vo}(t)\|_p\leq C\mathcal{C}_F(\nu t)^{-\frac12+\frac{3}{2p}}\longrightarrow0\qquad\text{as }t\to0.
\]
This proves \eqref{convergencia omega tilde p menor 3} and completes the proof.
\end{proof}
\section{Hasimoto solitons}\label{sec: moving filament}

In this section we focus on solutions of \eqref{vorticity_NS} when the initial data is concentrated on Hasimoto solitons. Hasimoto solitons is a family of open curves whose shape is preserved by the binormal flow while traslating and rotating with constant velocity. This allows us to follow the solution in local coordinates attached to the soliton. We use this explicit structure to find a suitable configuration of the geometric parameters for which the kinetic energy of the solution inside the physical core of the filament, a region that will be defined below, is effectively displaced. This displacement of energy is due to the nonlinear effects associated with the binormal flow.

The main point relies on choosing the parameters of the soliton so that the curvature remains $O(1)$, while the torsion is large enough to produce a macroscopic displacement during the time interval under consideration. To do this, we verify that this large torsion regime is still compatible with the tubular framework and with the estimates used in the fixed point argument. Finally, we show that the kinetic energy in the core contains a localized component which is transported along the filament with the Hasimoto soliton.
\subsection{Definition of the soliton and choice of parameters}
The \emph{Hasimoto soliton}, introduced in \cite{Hasimoto72}, is an explicit shape-preserving curve under the binormal flow. It arises when looking for solutions to \eqref{cubica schrodinger} that propagate steadily with constant velocity $2\tau_0$ along the filament and are asymptotically straight, namely
\[
\kappa(s,t)=|\TT_s(s,t)|\to0\qquad\text{as}\qquad s\to\infty.
\]
A straightforward computation (see \cite{Hasimoto72}) yields a solution of the form
\[
\Psi(s,t)=\kappa_{\lambda}(s,t)e^{i\tau_0 s},\qquad \kappa_{\lambda}(s,t)=2\lambda\operatorname{sech}\big(\lambda(s-2\tau_0 t)\big),
\]
so recalling the Hasimoto transform
\[
\psi(s,t)=\alpha(s,t)+i\beta(s,t)=\kappa(s,t)\exp\!\left(i\int_0^s\tau(s',t)\,ds'\right),
\]
we define $\XX_{\mathrm{sol}}(s,t)$ as the curve whose associated parallel frame satisfies \eqref{frenet_serret_frame} with
\[
\alpha(s,t)=\kappa_{\lambda}(s,t)\cos(\tau_0 s),\qquad \beta(s,t)=\kappa_{\lambda}(s,t)\sin(\tau_0 s).
\]
Notice that the soliton exhibits a localized bump structure, in which the curvature is concentrated on a length scale of order $\lambda^{-1}$ and propagates along the filament with speed $2\tau_0$.

Our goal is to construct a geometric configuration for which the solution obtained in Theorem \ref{Teorema vorticidad} exhibits a visible displacement during the time interval $(0,T)$. Since the soliton evolves under the physical binormal flow
\[
\XX_t=-\frac{\Gamma}{4\pi}\log(\sqrt{\nu t})\,\TT\wedge\TT_s,
\]
the natural time variable in Hasimoto's formulation differs from the
Navier--Stokes time scale by the logarithmic factor above. Undoing this
time rescaling, the center of the curvature bump propagates with velocity
\[
v_{{\rm prop}}(t)=\frac{\Gamma}{2\pi}\tau_0\log(\sqrt{\nu t}).
\]
Therefore, assuming that $\nu T\ll1$, its total displacement during the
time interval $(0,T)$ is
\[
\Delta s=\frac{\Gamma\tau_0}{2\pi}\int_{0}^{T}\left|\log(\sqrt{\nu t})\right|\,dt=\frac{\Gamma\tau_0T}{4\pi}\bigl(|\log(\nu T)|+1\bigr).
\]

Later, we shall choose a parameter $L>0$ so that a longitudinal interval of length
$2L$, centered at the curvature bump, contains almost all the localized
binormal energy. We therefore choose the torsion so that the center of the
bump travels precisely this distance during the interval $(0,T)$. Namely,
throughout this section we set
\begin{equation}\label{curvature and torsion order}
\lambda=1,\qquad
\tau_0=
\frac{8\pi L}
{\frac{\Gamma}{\nu} (\nu T)\bigl(|\log(\nu T)|+1\bigr)}\gg1.
\end{equation}
With this choice, the curvature bump is displaced longitudinally by a
distance $2L$ during the time interval $(0,T)$. 
\subsection{Tubular neighborhood of the Hasimoto soliton}

We now verify that the soliton with the parameters described above is geometrically admissible within the framework developed in Section \ref{sec: the local frame}. In order to apply Theorem \ref{Teorema vorticidad}, we first need to ensure that the Hasimoto soliton admits a regular tubular neighborhood with radius independent of the torsion.

The local condition follows directly from the curvature bound. Since
\[
\kappa_{\max}=2\lambda,
\]
the local radius remains of order $\lambda^{-1}$, and therefore is $O(1)$ under the assumption $\lambda=O(1)$.

It remains to verify the global non-self-intersection condition. By fixing the orientation so that the filament approaches the $z$-axis at infinity, in \cite{Hasimoto72} they prove that
\begin{equation}\label{expresion fisica de soliton hasimoto}
\begin{aligned}
\XX_{{\rm sol}}(s,t)\;&:\;z=s-\frac{2\mu}{\lambda}\tanh(\lambda(s-2\tau_0 t)),\qquad x+iy=re^{i\Theta},\\
\TT_{{\rm sol}}(s,t)\;&:\;\TT_z=1-2\mu\,{\rm sech}^2(\lambda(s-2\tau_0 t)),\qquad \TT_x+i\TT_y=-\lambda r\left(\tanh(\lambda(s-2\tau_0 t))-i\frac{\tau_0}{\lambda}\right)e^{i\Theta},
\end{aligned}
\end{equation}
where
\[
\mu=\frac{\lambda^2}{\lambda^2+\tau_0^2},\qquad r=\frac{2\mu}{\lambda}{\rm sech}(\lambda(s-2\tau_0 t)),\qquad \Theta=\tau_0 s+(\lambda^2-\tau_0^2)t.
\]
Notice that the relation between $z$ and $s$ can be rewritten as
\[
z-2\tau _{0}t=s-2\tau _{0}t-\frac{2\lambda }{\lambda ^{2}+\tau _{0}^{2}}
\tanh \left( \lambda (s-2\tau _{0}t\right) )\equiv G(s-2\tau _{0}t)
\]
and we can invert $G$, i.e. write
\[
s-2\tau _{0}t=G^{-1}(z-2\tau _{0}t)
\]
provided $\tau _{0}\geq \lambda $. This implies that the soliton translates
in the $z$ direction with a velocity $2\tau _{0}$ or, in other words, can be
written as
\[
x(z,t)+iy(z,t)=\tilde{r}(z-2\tau _{0}t)e^{i\tilde{\Theta }(z-2\tau
_{0}t)}e^{i(\lambda ^{2}+\tau _{0}^{2})t}
\]
with $(\tilde{r},\tilde{\Theta })=(r\circ G^{-1},\Theta \circ G^{-1})
$. This formula also implies that the soliton rotates with an angular
velocity $\omega =(\lambda ^{2}+\tau _{0}^{2})$. In addition, the Hasimoto soliton satisfies the following chord arc condition
\[
|\XX(s,t)-\XX(s_0,t)|\geq |z(s,t)-z(s_0,t)|\geq \frac{\tau_0^2-\lambda^2}{\tau_0^2+\lambda^2}|s-s_0|,
\]
and for sufficiently large torsion admits a regular tubular neighborhood $\mathcal{T}_R$ with radius
\[
R=O(1).
\]
\subsection{Uniformity of the estimates in the torsion parameter}\label{subsec:Uniformity of the estimates in the torsion parameter}

We next prove that the estimates involved in the fixed point argument remain uniform in the torsion parameter. This is the key point that allows us to consider the large torsion regime introduced above while preserving the framework of Section \ref{sec: existence result}.

\begin{lem}\label{cancelacion en el soliton para A}
Let $\XX$ be the Hasimoto soliton. Then the quantity
\[
A(l,|\hx|)=\int_{-l}^l\left(\int_0^s[\TT'(\xi)-\TT'(0)]\xi\,d\xi\right)\frac{ds}{(|\hx|^2+s^2)^{3/2}}
\]
can be bounded independently of the constant torsion parameter $\tau_0$ by
\[
|A(l,|\hx|)|\leq C\left(\frac{\|\partial_s|\TT_s|\|_\infty}{\|\TT_s\|_\infty}+\|\TT_s\|_\infty\right)=C'\lambda,
\]
where $C,C'>0$ are universal constants.
\end{lem}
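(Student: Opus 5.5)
Our plan is to exploit the fact that, in the parallel frame of the soliton, $\TT'=\alpha\ea_1+\beta\ea_2=\mathrm{Re}\big(\psi\,(\ea_1-i\ea_2)\big)$ with $\psi=\kappa_\lambda(s,t)e^{i\tau_0 s}$, and that the oscillating factor $e^{i\tau_0 s}$ is exactly compensated by an integration by parts in $s$. We center at $s_0=0$, write $\ea_\pm=\ea_1\mp i\ea_2$, and expand
\[
\TT'(\xi)-\TT'(0)=\mathrm{Re}\Big[\big(\psi(\xi)-\psi(0)\big)\ea_-(\xi)+\psi(0)\big(\ea_-(\xi)-\ea_-(0)\big)\Big].
\]
The second piece is harmless: since $(\ea_j)_s=-(\cdot)\TT$, we have $|\ea_-(\xi)-\ea_-(0)|\le C\|\TT_s\|_\infty|\xi|$, so after multiplying by $\xi$, integrating in $\xi$ and then against $(|\hx|^2+s^2)^{-3/2}$ over $|s|\le l$, it contributes at most $C\|\TT_s\|_\infty^2\, l\le C\|\TT_s\|_\infty$, in the same way as the $\|\TT'\|^2_\infty$ terms in the proof of Lemma \ref{lem:BS-in-s}.

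For the main piece we write $\psi(\xi)-\psi(0)=(\kappa(\xi)-\kappa(0))e^{i\tau_0\xi}+\kappa(0)(e^{i\tau_0\xi}-1)$. The part with $\kappa(\xi)-\kappa(0)$ is bounded directly, with no use of oscillation, by $\|\partial_s\kappa\|_\infty|\xi|$, which gives after the same integrations a contribution $C\|\partial_s|\TT_s|\|_\infty\, l\le C\|\partial_s|\TT_s|\|_\infty/\|\TT_s\|_\infty$. This is precisely where the choice $l=1/(5\|\TT'\|_\infty)$ enters. The delicate term is
\[
\kappa(0)\,\mathrm{Re}\int_{-l}^{l}\Big(\int_0^s(e^{i\tau_0\xi}-1)\,\xi\,\ea_-(\xi)\,d\xi\Big)\frac{ds}{(|\hx|^2+s^2)^{3/2}},
\]
whose naive bound grows like $\tau_0$ times the curvature. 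Here we use the fact that the binormal-flow-compatible frame gives the exact identity $\TT(s)-\TT(0)=\int_0^s\TT'$. Equivalently, we integrate by parts in $\xi$, using $\xi\,\TT'(\xi)=\partial_\xi\big(\xi(\TT(\xi)-\TT(0))\big)-(\TT(\xi)-\TT(0))$ as in \eqref{equation noname 2}. This rewrites $\int_0^s\TT'(\xi)\xi\,d\xi$ as $s(\TT(s)-\TT(0))-\int_0^s(\TT-\TT(0))$. Both terms are controlled by $|\TT(\xi)-\TT(0)|$, which for the soliton is bounded uniformly in $\tau_0$ via \eqref{expresion fisica de soliton hasimoto}: the components of $\TT$ have size $\mu\tau_0/\lambda\cdot\mathrm{sech}\le C\lambda/\tau_0\cdot\tau_0$ times an oscillating phase $e^{i\Theta}$. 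We then subtract the Taylor term $\tfrac12\TT'(0)s^2$ in the same integrated form, so that each surviving term is either an oscillatory integral $\int_0^s e^{i\tau_0\xi}\,(\text{smooth})\,d\xi=O(\tau_0^{-1})$ multiplied by at most one factor $\tau_0$ coming from $\TT$, or a non-oscillating term bounded by curvature or by its derivative.

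The main obstacle is this last bookkeeping: ensuring that no power of $\tau_0$ survives. Our first attempt is to work with the explicit formula \eqref{expresion fisica de soliton hasimoto} and write $\TT_x+i\TT_y=-\lambda r(\tanh-i\tau_0/\lambda)e^{i\Theta}$. We then note that $r\tau_0=2\lambda\tau_0/(\lambda^2+\tau_0^2)\,\mathrm{sech}\le 2\lambda\,\mathrm{sech}/\tau_0\cdot\tau_0/\tau_0$, so that each $\tau_0$ from a derivative of $e^{i\Theta}$ is paired with a factor $\mu\le\lambda^2/\tau_0^2$. Carrying this through should give $|A|\le C(\|\partial_s|\TT_s|\|_\infty/\|\TT_s\|_\infty+\|\TT_s\|_\infty)$. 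Substituting $\kappa=2\lambda\,\mathrm{sech}$, which gives $\|\TT_s\|_\infty=2\lambda$ and $\|\partial_s\kappa\|_\infty\le C\lambda^2$, then yields $C'\lambda$.
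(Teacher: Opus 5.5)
Your treatment of the frame-variation piece $\psi(0)(\ea_-(\xi)-\ea_-(0))$ and of the amplitude piece $(\kappa(\xi)-\kappa(0))e^{i\tau_0\xi}$ matches the first two terms of the paper's decomposition and is fine. The gap is in the phase term, and it is more than unfinished bookkeeping. Your claim that every surviving non-oscillating term is ``bounded by curvature or by its derivative'' fails for the Taylor term itself.

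Split the $s$-integral into $|s|\le\tau_0^{-1}$ and $\tau_0^{-1}\le|s|\le l$, with $\tau_0>10\lambda$. On the inner region, $|\TT''|\le C(\lambda\tau_0+\lambda^2)$ gives $\bigl|\int_0^s[\TT'(\xi)-\TT'(0)]\xi\,d\xi\bigr|\le C\lambda\tau_0|s|^3$, so this region contributes $O(\lambda)$. On the outer region your integration by parts gives
\[
\int_0^s[\TT'(\xi)-\TT'(0)]\xi\,d\xi=s\bigl(\TT(s)-\TT(0)\bigr)-\int_0^s\bigl(\TT(\xi)-\TT(0)\bigr)d\xi-\tfrac12\TT'(0)s^2 .
\]
For the soliton $|\TT(s)-\TT(0)|\le C\lambda/\tau_0$, so the first two terms again contribute $O(\lambda)$. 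The last term, however, is a fixed vector times a kernel integral. For $|\hx|\le\tau_0^{-1}$ (admissible points, and the supremum over them is what enters $C^*$) that integral satisfies
\[
\frac12\int_{\tau_0^{-1}\le|s|\le l}\frac{s^2\,ds}{(|\hx|^2+s^2)^{3/2}}\ \ge\ \frac{1}{2\sqrt2}\log(\tau_0 l),
\]
and nothing remains to cancel it. Hence
\[
|A(l,|\hx|)|\ \ge\ \frac{\kappa(s_0)}{2\sqrt2}\log(\tau_0 l)-C\lambda .
\]
At the centre of the bump ($\kappa=2\lambda$, $l=1/(10\lambda)$) this grows like $\lambda\log(\tau_0/\lambda)$. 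So the step you defer cannot be completed. Oscillation improves the naive bound $\kappa\tau_0 l$ only to $\kappa\log(\tau_0 l)$. What your decomposition actually proves is $|A|\le C\lambda\bigl(1+\log(1+\tau_0/\lambda)\bigr)$, not a $\tau_0$-uniform bound.

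The paper handles this term differently, via Fubini and a bound on $\int_0^{\tau_0 l}\frac{\sin u}{u}\,du$, but its Fubini identity has the parity reversed. Since $\xi\sin(\tau_0\xi)$ is even, its primitive from $0$ is odd in $s$ and integrates to zero against the even kernel. What survives is the odd part $\xi(\cos(\tau_0\xi)-1)$:
\[
I=2\int_0^l\bigl(\cos(\tau_0\xi)-1\bigr)\xi\int_\xi^l\frac{ds}{(|\hx|^2+s^2)^{3/2}}\,d\xi .
\]
As $|\hx|\to0$ this contains $\int_0^{\tau_0 l}\frac{\cos u-1}{u}\,du\sim-\log(\tau_0 l)$. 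A first-order check in $\tau_0$ confirms the parity issue: the $O(\tau_0)$ part of $I$ is $i\tau_0\int_{-l}^l\frac{s^3}{3}\frac{ds}{(|\hx|^2+s^2)^{3/2}}=0$, whereas the paper's formula has a nonzero $O(\tau_0)$ imaginary term.

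This is the classical $\log(1/(ka))$ wavenumber dependence of helical/Kelvin-wave self-induction. The curvature vector averages out on the pitch scale $\tau_0^{-1}$, so the local logarithm is effectively $\log(1/(\tau_0|\hx|))$. Correspondingly, $\TT(0)\wedge A\approx-\TT(0)\wedge\TT'(0)\log(\tau_0 l)$ is a genuinely torsion-dependent part of $\vv^*$. Neither your route nor the paper's can give the stated uniform bound, because that bound does not hold; the correct estimate grows logarithmically in $\tau_0$.
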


\begin{proof}
Using the decomposition
\[
\TT'(s)=\alpha(s)\ea_1(s)+\beta(s)\ea_2(s),
\]
and \eqref{alpha_beta_trig}, we write
\[
\begin{aligned}
\TT'(s)-\TT'(0)=&\big(\kappa(s)-\kappa(0)\big)\big(\cos\theta(s)\ea_1(s)+\sin\theta(s)\ea_2(s)\big)\\
&+\kappa(0)\Big[\cos\theta(s)\big(\ea_1(s)-\ea_1(0)\big)+\sin\theta(s)\big(\ea_2(s)-\ea_2(0)\big)\Big]\\
&+\kappa(0)\Big[\big(\cos\theta(s)-\cos\theta(0)\big)\ea_1(0)+\big(\sin\theta(s)-\sin\theta(0)\big)\ea_2(0)\Big].
\end{aligned}
\]
This yields
\[
|A(l,|\hx|)|\leq C\left(\frac{\|\partial_s|\TT_s|\|_\infty}{\|\TT_s\|_\infty}+\|\TT_s\|_\infty\right)+\|\TT_s\|_\infty\left|\int_{-l}^l\left(\int_0^s(e^{i\theta(\xi)}-e^{i\theta(0)})\xi\,d\xi\right)\frac{ds}{(|\hat{\xx}|^2+s^2)^{3/2}}\right|.
\]
Recalling the explicit form $\theta(s)=\tau_0 s$ for the Hasimoto soliton and using Fubini's theorem, we obtain
\[
I:=\int_{-l}^l\left(\int_0^s(e^{i\tau_0\xi}-1)\xi\,d\xi\right)\frac{ds}{(|\hat{\xx}|^2+s^2)^{3/2}}=2i\int_0^l\sin(\tau_0\xi)\xi\left(\int_\xi^l\frac{ds}{(|\hat{\xx}|^2+s^2)^{3/2}}\right)d\xi.
\]
This quantity is uniformly bounded in $\tau_0$ and $|\hx|$. Indeed, as $|\hx|\to0$,
\[
I\longrightarrow i\int_0^l\frac{\sin(\tau_0\xi)}{\xi}\,d\xi-\frac{i}{l^2}\int_0^l\xi\sin(\tau_0\xi)\,d\xi,
\]
and both terms on the right-hand side are uniformly bounded in $\tau_0$. Therefore,
\[
|A(l,|\hx|)|\leq C\left(\frac{\|\partial_s|\TT_s|\|_\infty}{\|\TT_s\|_\infty}+\|\TT_s\|_\infty\right).
\]
and since for the Hasimoto soliton both terms are of order $\lambda$, the result follows.
\end{proof}

\begin{cor}\label{Corolario hasimoto}
For the Hasimoto soliton, the estimate of Lemma \ref{estimacion 3/2 termino de fuerza} is uniform with respect to the torsion parameter $\tau_0$.
\end{cor}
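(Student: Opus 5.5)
The plan is to go through the proof of Lemma \ref{estimacion 3/2 termino de fuerza} and, for each term, check which geometric quantity of $\XX$ enters its estimate. For each one we then bound that quantity for the Hasimoto soliton by a constant depending only on $\lambda=1$, not on $\tau_0$. The estimate in that lemma only enters through the explicit constant $\mathcal{C}_F$ of \eqref{constante explicita C_F} and the auxiliary constant $C^*$ of \eqref{C estrella}, together with the less singular terms absorbed into $O(|\log(\nu T)|\sqrt{\nu T})$. So it suffices to show that every geometric quantity used there is bounded uniformly in $\tau_0$.

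\textbf{Step 1: the explicit quantities.} We have $\|\TT_s\|_\infty=\kappa_{\max}=2\lambda$, which is independent of $\tau_0$. Hence $\|\TT'\|_\infty|\log\|\TT'\|_\infty|$ and $l=1/(5\|\TT_s\|_\infty)$ are also independent of $\tau_0$. Lemma \ref{cancelacion en el soliton para A} gives $\|A(l,|\hx|)\|_\infty\le C'\lambda$ uniformly in $\tau_0$. The tubular radius $R$ is $O(1)$ for $\tau_0$ large. The chord–arc constant is $c_0=(\tau_0^2-\lambda^2)/(\tau_0^2+\lambda^2)\ge 1/2$ once $\tau_0\ge\sqrt3\lambda$. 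These two facts enter the constant in Lemma \ref{lem:BS-in-s} (tail integral) and in Lemma \ref{lema 5.5}. It follows that $C^*$, and with it the bounds for $\bm F_1$ and the divergence-form part of $\bm F_2$ involving $\vo_0\otimes\vv^*$, are uniform in $\tau_0$.

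\textbf{Step 2: terms with higher derivatives of $\TT$.} This is the main obstacle. Here $\TT_{ss}$, and possibly $\TT_{sss}$ through $\Delta\TT$, contain factors of $\tau_0$, since $\psi_s=(\kappa_s+i\tau_0\kappa)e^{i\tau_0 s}$. Three groups have to be handled.

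\emph{(a) Non-divergence terms.} By the cancellation \eqref{segunda cancelación temporal} of Remark \ref{rem: cancelacion torsion}, they reduce to $\frac12\partial_s(|\TT_s|^2)=\kappa\kappa_s$. This equals $4\lambda^3\operatorname{sech}^2\tanh$ and is independent of $\tau_0$, so the $O(|\log(\nu T)|\sqrt{\nu T})$ contribution is uniform.

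\emph{(b) The viscous term $\nu\Omega^\eta\Delta\TT$.} Following the proof, write it as $\nu\nabla\cdot(\Omega^\eta\nabla\TT)-\nu\nabla\Omega^\eta\cdot\nabla\TT$. Both pieces involve only first derivatives of $\TT$. Since $\nabla\TT=\TT_s\otimes\nabla s$ with $|\nabla s|=1/h\le 2$, these are bounded by $\|\TT_s\|_\infty$. The second piece actually vanishes, because $\nabla r\cdot\nabla s=0$.

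\emph{(c) The term $\vo_0\cdot\nabla\vv^*$.} It is kept in divergence form $\nabla\cdot(\vo_0\otimes\vv^*)$, so only $|\vv^*|$ enters, and this is controlled uniformly by Step 1. The same holds for $\vv^*\cdot\nabla\vo_0$: the derivative falls on $\vo_0$ and produces only $\Omega^\eta_r$ and $\Omega^\eta\TT_s$.

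\textbf{Step 3: the remaining terms.} The cutoff terms involve only $\Omega$, $\eta_R$, $R$ and $\ea_r\cdot\TT_s$, so they are uniform in $\tau_0$. The same holds for $\vo_0\cdot\nabla\vv_{L\text{-}O}$, which is bounded by $\Gamma(\nu t)^{-1/2}\Omega^\eta|\TT_s|$.

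Collecting Steps 1–3, every constant entering \eqref{cota termino de fuerza} depends only on $\lambda$, $\Gamma$, $\nu$ and $T$, and not on $\tau_0$. This gives the stated uniformity. The one point I would double-check carefully is that no hidden $\|\TT_{ss}\|_\infty$ enters through the scale factor $h$ or the time derivatives of the frame $\gamma$ in \eqref{t_derivative_parallel_frame}. My first attempt would be to verify that every occurrence of these quantities is in the combination already reduced by \eqref{segunda cancelación temporal}.
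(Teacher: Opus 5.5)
Your proposal is correct and follows the paper's own argument. The paper observes that $\tau_0$ can only enter through $A(l,|\hx|)$ in the bound on $\vv^*$ (Lemma \ref{lem:BS-in-s}) and through the non-divergence $\TT_{ss}$ terms, which are removed by the cancellation in Remark \ref{rem: cancelacion torsion}; it then invokes Lemma \ref{cancelacion en el soliton para A}. Your term-by-term audit (the divergence-form viscous term, the chord--arc constant, the radius $R$) only makes explicit what the paper leaves implicit.

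The loose end you flag closes immediately. The scale factor $h=1-r\,\ea_r\cdot\TT_s$ involves only $\TT_s$, and $\gamma$ enters only through $\varphi_t$, which never appears because $\vo_0$ is independent of $\varphi$.
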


\begin{proof}
By Lemma \ref{lem:BS-in-s} and Remark \ref{rem: cancelacion torsion}, the only possible dependence of $|\vv^*|$ on $\tau_0$ enters through $|A(l,|\hx|)|$. Lemma \ref{cancelacion en el soliton para A} shows that, for the Hasimoto soliton, $|A(l,|\hx|)|$ can be bounded independently of $\tau_0$ and is bounded by $C\lambda$. Therefore the bound in Lemma \ref{estimacion 3/2 termino de fuerza} can be made uniform in $\tau_0$.
\end{proof}

\subsection{Macroscopic displacement of localized kinetic energy}

Now, we focus on the Hasimoto soliton in order to identify a portion of the filament core whose kinetic energy is transported by the binormal motion. Let us first recall the structure of the approximate solution constructed above:
\[
\bm{\omega}=\bm{\omega}_0+\tilde{\bm{\omega}},
\]
where, in tubular coordinates, the leading order term is given by
\[
\bm{\omega}_0(r,\varphi,s,t)=\frac{\Gamma}{4\pi}\frac{e^{-\frac{r^2}{4\nu t}}}{\nu t}\eta_R(r)\TT(s,t).
\]
The corresponding leading velocity field has the form
\[
\bm{v}_0=\bm{v}_{L\text{-}O}+\XX_t+\bm{v}^*,
\]
with
\[
\bm{v}_{L\text{-}O}(r,\varphi,s,t)=\frac{\Gamma}{2\pi}\frac{1}{r}\left(1-e^{-r^2/(4\nu t)}\right)\ea_\varphi(\varphi,s,t),
\]
and
\[
\XX_t(s,t)=-\frac{\Gamma}{4\pi}\log(\sqrt{\nu t})\,\TT(s,t)\wedge\TT_s(s,t).
\]

\subsubsection*{The physical core region}

We define the core through two physical conditions. The first one is a high-vorticity condition, which identifies the region where the Lamb--Oseen vorticity is the main contribution in the total vorticity. The second one is a velocity condition, which selects the region where the Lamb--Oseen velocity is smaller than binormal motion. More precisely, we set
\begin{equation}\label{core region label 0}
D_{M_t}=\{\bm{x}\in\RR^3:\ |\bm{\omega}(\bm{x},t)|\geq M_t\},\qquad D_{N_t}=\{\bm{x}\in\RR^3:\ |\bm{v}(\bm{x},t)|\leq N_t\},
\end{equation}
and define
\[
D_t=D_{M_t}\cap D_{N_t}.
\]
First, in order to set $D_t$, we recall that since $\lambda=1$, due to Theorem \ref{Teorema vorticidad}, the perturbation satisfies
\[
\|\tilde{\bm{\omega}}\|_\infty=O\bigl(\Gamma(\nu t)^{-1/2}\bigr).
\]
Then, since the leading Lamb--Oseen vorticity has size
\[
\|\bm{\omega}_0\|_\infty\sim \frac{\Gamma}{\nu t}e^{-r^2/(4\nu t)},
\]
the choice
\[
M_t\sim \frac{\Gamma}{\nu t}
\]
selects the region where the leading term $\bm{\omega}_0$ is still of order $(\nu t)^{-1}$. In tubular coordinates this amounts to requiring that the Gaussian factor has not decayed significantly, namely
\[
e^{-r^2/(4\nu t)}\sim 1.
\]
Thus the high-vorticity region is contained at transverse distances of order
\[
r\lesssim \sqrt{\nu t},
\]
which is the viscous scale associated with the Lamb--Oseen core.

Now, concerning the velocity condition, we choose $N_t$ comparable to the maximum of the binormal velocity 
\[
N_t\sim \frac{\Gamma}{4\pi}|\log(\sqrt{\nu t})|\|\TT_s\|_{\infty}= \frac{\Gamma}{2\pi}|\log(\sqrt{\nu t})|,
\]
using that $\lambda=1$. In tubular coordinates, this condition is translated into
\[
\frac{1}{r}\left(1-e^{-r^2/(4\nu t)}\right)\lesssim |\log(\sqrt{\nu t})|,
\]
which may in principle select more than one radial region. Nevertheless, the relevant component for the core is the inner one, since it is the one that intersects the high-vorticity region $D_{M_t}$. In this regime, $r/\sqrt{\nu t}\ll1$, and hence
\[
1-e^{-r^2/(4\nu t)}\sim \frac{r^2}{4\nu t}.
\]
Therefore, the velocity condition yields
\[
r\lesssim \nu t\,|\log(\nu t)|.
\]
Since
\[
\nu t\,|\log(\nu t)|\ll\sqrt{\nu t}
\]
for sufficiently small $\nu t$, this condition is more restrictive than the high-vorticity condition. Consequently, the intersection $D_t=D_{M_t}\cap D_{N_t}$ is, up to universal multiplicative constants, described by the tubular region
\begin{equation}\label{label-core-region}
D_t=\left\{\xx\in\mathbb{R}^3:
\inf_{s\in\mathbb{R}}|\xx-\XX(s,t)|
\leq r_c(t)\right\},
\qquad
r_c(t)\sim \nu t\,|\log(\nu t)|.
\end{equation}
\subsubsection*{Core energy and leading terms}
Let $I\subset\mathbb{R}$ be a bounded interval. We define the portion of the physical core lying over $I$ by
\[
D_t(I):=\left\{\xx=\XX(s,t)+r\ea_r(s,\varphi, t):
s\in I,\quad r\leq r_c(t)\right\}.
\]
In tubular coordinates around the Hasimoto soliton, the energy inside this portion of physical core is
\begin{equation}\label{enegy core total}
E_{\mathrm{core}}(I,t)
=\frac12\int_I\int_0^{2\pi}\int_0^{r_c(t)}
|\bm{v}(r,\varphi,s,t)|^2h(r,\varphi,s,t)\,r\,dr\,d\varphi\,ds.
\end{equation}
Since $\kappa=O(1)$ and $r_c(t)\ll1$ we have that $h\sim 1$, so using the decomposition
\[
\bm{v}_0=\bm{v}_{L\text{-}O}+\XX_t+\bm{v}^*,
\]
we obtain
\[
E_{\mathrm{core}}(I,t)
\sim\frac12\int_I\int_0^{2\pi}\int_0^{r_c(t)}
\left|
\frac{\Gamma}{8\pi\nu t}r\,\ea_\varphi(\varphi,s,t)
+\XX_t(s,t)+\bm{v}^*(r,\varphi,s,t)
\right|^2r\,dr\,d\varphi\,ds.
\]
Expanding the square, we obtain
\[
\begin{aligned}
&\left|\frac{\Gamma}{8\pi\nu t}r\,\ea_\varphi+\XX_t+\bm{v}^*\right|^2\\
&=\left(\frac{\Gamma}{8\pi\nu t}\right)^2r^2+\left(\frac{\Gamma}{4\pi}\right)^2|\log(\sqrt{\nu t})|^2|\TT\wedge\TT_s|^2-\frac{\Gamma^2}{16\pi^2\nu t}r\log(\sqrt{\nu t})\,\ea_\varphi\cdot(\TT\wedge\TT_s)\\
&\quad+|\bm{v}^*|^2+\frac{\Gamma}{4\pi\nu t}r\,\ea_\varphi\cdot\bm{v}^*-\frac{\Gamma}{2\pi}\log(\sqrt{\nu t})\,(\TT\wedge\TT_s)\cdot\bm{v}^*.
\end{aligned}
\]
Since, by Proposition \ref{Lemma_v_o} and Corollary \ref{Corolario hasimoto}, $\|\vv^*\|_\infty\sim\Gamma$, the first three terms above are the leading-order ones in the core. Indeed, since $r\leq r_c\sim \nu t|\log(\nu t)|$, we have
\[
\left(\frac{\Gamma}{8\pi\nu t}\right)^2r^2=O\bigl(\Gamma^2|\log(\nu t)|^2\bigr),
\]
while
\[
\left(\frac{\Gamma}{4\pi}\right)^2|\log(\sqrt{\nu t})|^2|\TT\wedge\TT_s|^2=O\bigl(\Gamma^2|\log(\nu t)|^2\bigr),
\]
and
\[
\frac{\Gamma^2}{16\pi^2\nu t}r|\log(\sqrt{\nu t})|\,|\ea_\varphi\cdot(\TT\wedge\TT_s)|=O\bigl(\Gamma^2|\log(\nu t)|^2\bigr).
\]
Nevertheless, the mixed Lamb--Oseen/binormal term cancels after integration in $\varphi$, since
\[
\ea_\varphi(\varphi,s,t)\cdot(\TT(s,t)\wedge\TT_s(s,t))=\kappa(s,t)\cos(\varphi-\tau_0s).
\]
Therefore, at leading order after integration over the tube,
\begin{equation}\label{energy core binormal}
E_{\mathrm{core}}(I,t)\sim E_{\mathrm{core}}^{L\text{-}O}(I,t)+E_{\mathrm{core}}^{BF}(I,t),
\end{equation}
where
\begin{equation}\label{energy core lamb oseen}
E_{\mathrm{core}}^{L\text{-}O}(I,t)=\frac12\int_I\int_0^{2\pi}\int_0^{r_c}\left(\frac{\Gamma}{8\pi\nu t}\right)^2r^3\,dr\,d\varphi\,ds,
\end{equation}
and
\[
E_{\mathrm{core}}^{BF}(I,t)=\frac12\int_I\int_0^{2\pi}\int_0^{r_c}\left(\frac{\Gamma}{4\pi}\right)^2|\log(\sqrt{\nu t})|^2|\TT\wedge\TT_s|^2\,r\,dr\,d\varphi\,ds.
\]
\subsubsection*{Localization of the binormal energy}

We now focus on the kinetic energy associated with the binormal transport. For any bounded interval $I\subset\mathbb{R}$, its leading-order contribution inside the core is
\[
E_{\mathrm{core}}^{BF}(I,t)\sim\frac12\int_I\int_0^{2\pi}\int_0^{r_c(t)}\left(\frac{\Gamma}{4\pi}\right)^2\left|\log(\sqrt{\nu t})\right|^2\kappa(s,t)^2\,r\,dr\,d\varphi\,ds.
\]
For the Hasimoto soliton with $\lambda=1$, the curvature is localized around the center $s=s_c(t)$ and is given by
\[
\kappa(s,t)=2\operatorname{sech}(s-s_c(t)).
\]
We consider the moving interval
\begin{equation}\label{label-moving-interval}
I_t=[s_c(t)-L,s_c(t)+L],
\end{equation}
where $L=O(1)$ is fixed. Direct integration gives
\[
\int_{I_t}\kappa(s,t)^2\,ds=8\tanh L=8\bigl(1+O(e^{-2L})\bigr),
\]
whereas
\[
\int_{\mathbb{R}\setminus I_t}\kappa(s,t)^2\,ds=8(1-\tanh L)=O(e^{-2L}).
\]
For instance, for $L=2$, more than $96\%$ of the binormal flow energy is concentrated in $I_t$. Therefore,
\[
\frac{E_{\mathrm{core}}^{BF}(\mathbb{R}\setminus I_t,t)}{E_{\mathrm{core}}^{BF}(I_t,t)}\ll1
\]
with
\[
E_{\mathrm{core}}^{BF}( I_t,t)=O\bigl(\Gamma^2(\nu t)^2|\log(\nu t)|^4\bigr).
\]

\subsubsection*{Comparison with the Lamb--Oseen background}

Over the same interval $I_t$, the Lamb--Oseen contribution is
\[
E_{\mathrm{core}}^{L\text{-}O}(I_t,t)\sim\frac12\int_{I_t}\int_0^{2\pi}\int_0^{r_c(t)}\left(\frac{\Gamma}{8\pi\nu t}\right)^2r^3\,dr\,d\varphi\,ds.
\]
Since $|I_t|=2L=O(1)$ and $r_c(t)\sim\nu t|\log(\nu t)|$, we obtain
\[
E_{\mathrm{core}}^{L\text{-}O}(I_t,t)\sim\Gamma^2(\nu t)^2|\log(\nu t)|^4\sim E_{\mathrm{core}}^{BF}(I_t,t)
\]
Notice that choosing $r_c =\delta \nu t|\log{\sqrt{\nu t}}|$ the term $E_{\mathrm{core}}^{L\text{-}O}(I_t,t)$ becomes small with respect to $E_{\mathrm{core}}^{BF}(I_t,t)$ with $\delta<1$ small.
Then, while Lamb--Oseen contribution forms a homogeneous background (i.e. with a density independent of $s$) along the physical core, the binormal contribution is localized near $s=s_c(t)$ and is transported with the Hasimoto soliton. Consequently, the energy inside $D_t(I_t)$ contains a localized component, comparable in size to the Lamb--Oseen background, which propagates along the filament.

Our discussion is consistent with the recent numerical results of Sterkers and Krstulovic \cite{SterkersKrstulovic2026}. Using three-dimensional Navier--Stokes simulations in a high-Reynolds-number regime, they provide numerical evidence for the propagation of Hasimoto-type solitons in a viscous fluid. In particular, the measured propagation velocity for large enough times is consistent with the LIA prediction
\[
v_{\mathrm{num}}
=\frac{\Gamma}{2\pi}\Lambda\tau_0,
\]
where $\Lambda$ is an effective LIA coefficient fitted from the numerically observed soliton trajectory. In our setting, recall that we have set $\lambda=1$, the analogous effective LIA factor is time-dependent and is initially given by
\[
\Lambda_{\mathrm{eff}}(t)
=\bigl|\log(\sqrt{\nu t})\bigr|,
\]
which arises from the desingularization of the Biot--Savart law at the viscous core scale $\sqrt{\nu t}$.

\end{document}